
%

\documentclass[12pt]{amsproc}

\usepackage[colorlinks,linkcolor=magenta,anchorcolor=magenta,citecolor=cyan,CJKbookmarks=True]{hyperref}
\usepackage{lineno,hyperref}
\modulolinenumbers[5]
\usepackage{amsmath}
\usepackage{amssymb}
\usepackage{mathrsfs}
\usepackage{amsthm}
\usepackage{bm}
\usepackage{graphicx}
\usepackage{booktabs}
\usepackage{float}
\usepackage{subfig}
\usepackage{geometry}
\usepackage{bbm}
\usepackage{color}
\geometry{left=3cm,right=3cm,top=3cm,bottom=3cm}

\newtheorem{Def}{Definition}[section]
\newtheorem{lem}[Def]{Lemma}
\newtheorem{tho}[Def]{Theorem}
\newtheorem{pro}[Def]{Proposition}
\newtheorem{rem}[Def]{Remark}
\newtheorem{rems}[Def]{Remarks}
\newtheorem{cor}[Def]{Corollary}
\newtheorem{hyp}[Def]{Assumption}

\newcommand{\ud}{\mathrm d}

\numberwithin{equation}{section}
\allowdisplaybreaks[4]

\begin{document}

\title[Convergence in density of numerical approximation]
{Convergence in Density of Splitting AVF Scheme for Stochastic Langevin Equation}

\author{Jianbo Cui }
\address{1. LSEC, ICMSEC, \\
Academy of Mathematics and Systems Science,\\ Chinese Academy of Sciences,\\
Beijing, 100190, China\\ 
2. School of Mathematical Science,\\ University of Chinese Academy of Sciences,\\ Beijing, 100049, China}
\curraddr{}
\email{jianbocui@lsec.cc.ac.cn}
\thanks{}

\author{Jialin Hong}
\address{1. LSEC, ICMSEC, \\
Academy of Mathematics and Systems Science,\\ Chinese Academy of Sciences,\\
Beijing, 100190, China\\ 
2. School of Mathematical Science,\\ University of Chinese Academy of Sciences,\\ Beijing, 100049, China}
\curraddr{}
\email{hjl@lsec.cc.ac.cn}
\thanks{}

\author{Derui Sheng}
\address{1. LSEC, ICMSEC, \\
Academy of Mathematics and Systems Science,\\ Chinese Academy of Sciences,\\
Beijing, 100190, China\\ 
2. School of Mathematical Science,\\ University of Chinese Academy of Sciences,\\ Beijing, 100049, China}
\curraddr{}
\email{sdr@lsec.cc.ac.cn}
\thanks{}

\subjclass[2010]{Primary 60H10; Secondary 60H07, 65C50}

\keywords{Stochastic Langevin equation, Non-globally monotone coefficient, Splitting AVF scheme, 
Density function, Strong convergence, Malliavin calculus}

\date{\today}

\dedicatory{}

\begin{abstract}
In this article, we study the density function of the numerical solution of the splitting averaged vector field (AVF) scheme for the stochastic Langevin equation.
To deal with the non-globally monotone coefficient in the considered equation, we first present the exponential integrability properties of the exact and numerical solutions. 
Then we show the existence and smoothness of the density function of
 the numerical solution by proving its uniform non-degeneracy in Malliavin sense. 
 In order to analyze the approximate error between the density function of the exact solution and that of the numerical solution,
 we derive the optimal strong convergence rate in every Malliavin--Sobolev norm of the numerical scheme via Malliavin calculus. 
Combining the approximation result of Donsker's delta function and the smoothness of the density functions, we prove that the convergence rate in density  coincides with the optimal strong convergence rate of the numerical scheme.

\end{abstract}

\maketitle

\section{Introduction}

Convergence in density of numerical approximations through the probabilistic approach has received considerable attentions for stochastic differential equations (SDEs) whose coefficients are smooth vector fields with bounded derivatives. It is well known that,
under the uniform ellipticity condition, the numerical solution given by the Euler--Maruyama scheme admits a density function (see e.g. \cite{VE02}) and converges in density of order $1$ (see e.g. \cite[Theorem 8]{JG05}). Under H\"{o}rmander's condition, the idea of  perturbing the numerical solution has been used in \cite{BT96,HW96,KHA97} to approximate the density function $p_T(x,y)$ of the exact solution starting from $x$ at time $T$. 
In \cite{BT96}, the authors show that the difference between $p_T(x,y)$ and the density function of the law of a small perturbation of the Euler--Maruyama method with stepsize $\frac{T}{N}$ is expanded in terms of powers of  $\frac{1}{N}$.
 The authors in \cite{HW96} obtain a general approximation result for Donsker's delta functions and approximate 
  $p_T(x,y)$ by the density function of the sum of the It\^{o}--Taylor scheme and an independent Gaussian random variable. 
In \cite{KHA97} the author studies the It\^{o}--Taylor approximation by applying a slight modification of the weak approximation technique and proves that the rate of convergence in density can be considered as weak approximation rate.
For the numerical approximations of SDEs with superlinearly growing nonlinearities and degenerate additive noises,
to the best of our knowledge,
there are few results available concerning the convergence in density. Two natural questions are:

\textit{{\rm{(i)}} Does the density function of the numerical solution exist?}

\textit{{\rm{(ii)}} Once the density function of numerical solution exists, does it provide a proper approximation for the density function of the exact solution?}
  
To study the above questions, the present work considers the numerical approximation of the stochastic Langevin equation
\begin{equation}\label{SDE1}
\left\{
\begin{split}
&\,\ud P=-\nabla F(Q)\,\ud t-vP \,\ud t+\sigma\,\ud W_t,\\
&\,\ud Q=P\,\ud t.
\end{split}
\right.
\end{equation}
Here $t\in(0,T],\,T>0,\,v>0$, $\sigma=[\sigma_1,\ldots,\sigma_d]$ with $\sigma_k,\,k=1,\ldots,d$, being $m$-dimensional constant vectors, $-\nabla F$ is a locally Lipschitz function and   $W=(W^1,...,W^d)^\top$ is a $d$-dimensional standard Wiener process on  a filtered complete probability space $(\Omega,\mathscr{F}, \{\mathscr{F}_t\}_{t\ge0}, \mathbb{P})$. Equation \eqref{SDE1} arises in various complex dynamical system models subject to random noise such as chemical interactions and molecular dynamics, for more details, see \cite{DTG00,AB18} and references therein.  
With the help of exponential moment estimate of $X(t)=(P(t)^\top,Q(t)^\top)^\top$,
we show that $\{X(t)\}_{t\in(0,T]}$ possesses a smooth density function $\{p_t(X(0),y)\}_{t\in(0,T]}$ for equation \eqref{SDE1} under H\"ormander's condition.
In order to inherit this property in numerical approximation, we propose the splitting AVF scheme: 
\begin{equation}\label{split sol}
\left\{
\begin{split}
&\bar P_{n+1}=P_n-h\int_0^1 \nabla F\left(Q_n+\tau\left(\bar Q_{n+1}-Q_n\right)\right)\,\ud \tau,\\
&\bar Q_{n+1}=Q_n+\frac{h}{2}\left(\bar P_{n+1}+P_n\right),\\
&P_{n+1}=e^{-vh}\bar P_{n+1}+\sum_{k=1}^d \int_{t_n}^{t_{n+1}} e^{-v(t_{n+1}-t)}\sigma_k\,\ud W_{t}^{k},\\
&Q_{n+1}=\bar Q_{n+1},
\end{split}
\right.
\end{equation}
where $(P_0^\top,Q_0^\top)^\top=(P(0)^\top,Q(0)^\top)^\top$ is a deterministic datum, $h=T/N^h$ and $n=0,\ldots,N^h-1$.

With regard to the problem \textit{{\rm{(i)}}}, 
we first study the regularity estimate of the numerical solution $X_n=(P_n^\top,Q_n^\top)^\top$ in Malliavin sense.
By showing the exponential integrability property of $X_n$, we obtain its regularity estimate, in every Malliavin--Sobolev space, for equation \eqref{SDE1} with non-globally monotone coefficient.
Then 
combining this estimate with the invertibility of the corresponding Malliavin covariance matrices $\gamma_{n},\,n=2,\ldots,N^h$, we 
 prove the existence of the density functions $p^n_T(X_0,y)$ of $X_n,\,n=2,\ldots,N^h$.
Furthermore, we wonder whether $p^{N^h}_T(X_0,y)$ could 
inherit the smoothness of $p_T(X(0),y)$.
This is more involved than studying the smoothness of $p_T(X(0),y)$ due to the loss of 
H\"{o}rmander's theorem. Our solution to 
this problem lie on deriving the regularity estimate of $X_{N^h}$ and proving the non-degeneracy of $\gamma_{N^h}$.  By deducing a positive lower bound estimate of the smallest eigenvalue of $\gamma_{N^h}$, we prove that 
$\left(\det\gamma_{N^h}\right)^{-1}\in L^{\infty-}(\Omega)$. 
By means of the criterion for the smoothness of the density function of a random variable (see e.g. \cite[Theorem 2.1.4]{DN06}), we finally prove the smoothness of $p^{N^h}_T(X_0,y)$.

Concerning the problem \textit{{\rm{(ii)}}}, our strategy includes two stages. In the first stage, we derive 
 the optimal strong convergence rate of scheme \eqref{split sol} for equation \eqref{SDE1}. 
  \begin{tho}\label{SC1}
Let Assumption \ref{F2} hold, $h_0$ be a sufficiently small positive constant and $p\ge1$. There exists some positive constant $C=C(p,T,\sigma,X(0))$ such that for any $h\in(0,h_0]$,
\begin{equation*}
\sup_{n\le N^h}\left\|X_n-X(t_n)\right\|_{L^{2p}(\Omega;\mathbb{R}^{2m})}\le Ch.
\end{equation*}
\end{tho}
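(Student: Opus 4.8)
\noindent The plan is to estimate the error process $e_n:=X_n-X(t_n)$ by a discrete variation-of-constants argument, in which the local (one-step) errors are generated and then transported by the one-step maps of the scheme. The only genuine difficulty is that, since $\nabla F$ is merely locally Lipschitz with superlinearly growing derivatives, the effective Lipschitz constant of the transport map is a \emph{random} quantity with polynomial growth in $|X_n|$ and $|X(t_n)|$, so a naive discrete Gronwall argument does not close; this is exactly where the exponential integrability of the exact and numerical solutions, established earlier, is needed. Before this, I would record that for $h_0$ small enough the implicit relation defining $\bar Q_{n+1}$ in \eqref{split sol} is uniquely solvable, so that $X_n$, and hence $e_n$, is well defined, and that the a priori bound $\sup_{h\in(0,h_0]}\sup_{n\le N^h}\mathbb E[|X_n|^q]+\mathbb E\bigl[\sup_{t\in[0,T]}|X(t)|^q\bigr]<\infty$ holds for every $q\ge 1$, as a consequence of the exponential integrability.

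Next I would set up the one-step decomposition. Let $\widehat\Phi_h$ denote the one-step map of \eqref{split sol} and put $\widetilde X_{n+1}:=\widehat\Phi_h(X(t_n))$, one step of the scheme issued from the exact value; since the stochastic increment $\sum_{k=1}^d\int_{t_n}^{t_{n+1}}e^{-v(t_{n+1}-t)}\sigma_k\,\ud W_t^{k}$ is common to $X_{n+1}$ and $\widetilde X_{n+1}$, it cancels in their difference, and
\begin{equation*}
e_{n+1}=\underbrace{\widehat\Phi_h(X_n)-\widehat\Phi_h(X(t_n))}_{=:\,A_n}\;+\;\underbrace{\widetilde X_{n+1}-X(t_{n+1})}_{=:\,R_n}.
\end{equation*}
For the local error $R_n$ I would Taylor-expand around $t_n$: because the Ornstein--Uhlenbeck part is integrated exactly and the noise is additive, $R_n=R_n^{(1)}+R_n^{(2)}$ with $R_n^{(1)}:=\mathbb E[R_n\mid\mathscr{F}_{t_n}]$ satisfying $\|R_n^{(1)}\|_{L^{2p}}\le Ch^2\|\eta_n\|_{L^{2p}}$ (the $O(h^2)$ being the Lie-splitting error, chiefly from replacing $e^{-v(t_{n+1}-t)}$ by $e^{-vh}$ in the drift integral, the averaged vector field discretization of the Hamiltonian subsystem contributing only at order $h^3$), and with $R_n^{(2)}:=R_n-R_n^{(1)}$ a martingale difference satisfying $\|R_n^{(2)}\|_{L^{2p}}\le Ch^{3/2}\|\eta_n\|_{L^{2p}}$ (generated in the $Q$-component by iterated integrals such as $\sigma\int_{t_n}^{t_{n+1}}(t_{n+1}-r)\,\ud W_r$, which the scheme does not resolve); here $\eta_n$ is a polynomial in $\sup_{t\in[t_n,t_{n+1}]}|X(t)|$, so $\sup_n\|\eta_n\|_{L^q}\le C$ for every $q$ by the a priori bounds. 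Summing then gives $\sum_n\|R_n^{(1)}\|_{L^{2p}}\le CN^hh^2=Ch$ and, by the Burkholder--Davis--Gundy inequality, $\bigl\|\sum_n R_n^{(2)}\bigr\|_{L^{2p}}\le C\bigl(\sum_n\|R_n^{(2)}\|_{L^{2p}}^{2}\bigr)^{1/2}\le C(N^hh^3)^{1/2}=Ch$.

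The core step --- and the one I expect to be the main obstacle --- is to transport the local errors without degrading the rate. Expanding the averaged vector field step and expressing $\nabla F\bigl(Q_n+\tau(\bar Q_{n+1}-Q_n)\bigr)-\nabla F\bigl(Q(t_n)+\tau(\bar Q(t_{n+1})-Q(t_n))\bigr)$ via the mean value theorem, together with the local Lipschitz bounds for the derivatives of $F$ from Assumption \ref{F2}, the implicit function theorem applied to the $\bar Q$-relation, and the contraction $e^{-vh}\le 1$, should give a pathwise stability bound $|A_n|\le (1+h\lambda_n)|e_n|$ with $\lambda_n\ge 0$ depending polynomially on $|X_n|$ and $|X(t_n)|$. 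Iterating $e_{n+1}=A_n+R_n^{(1)}+R_n^{(2)}$ then weights each local error by a factor $\prod_{j=k+1}^{n-1}(1+h\lambda_j)\le\exp\bigl(h\sum_j\lambda_j\bigr)$, and the obstruction is twofold: $\exp(h\sum_j\lambda_j)$ is not integrable under polynomial moment bounds alone, and the martingale-difference structure of $\sum_k R_k^{(2)}$ --- which produced the sharp $O(h)$ bound above --- is destroyed once each $R_k^{(2)}$ is multiplied by the solution-dependent factors $\prod_{j>k}(1+h\lambda_j)$, which are not $\mathscr{F}_{t_{k+1}}$-measurable. Both issues are handled by the exponential integrability: $h\sum_{k\le N^h}\lambda_k$ is, up to constants, a Riemann sum of a Lyapunov-type functional of $X_n$ and $X(t_n)$, so the exponential estimates yield $\sup_{h\in(0,h_0]}\mathbb E\bigl[\exp\bigl(\theta\, h\sum_{k\le N^h}\lambda_k\bigr)\bigr]<\infty$ for $\theta$ as large as required once $h_0$ is small; combining this with H\"older's inequality for the $R^{(1)}$-contribution, and with a mean-square convergence lemma tailored to non-globally monotone coefficients --- in which the martingale increments are controlled by conditioning before, rather than after, they are weighted by the random transport factors --- converts the pathwise bound into $\sup_{n\le N^h}\|e_n\|_{L^{2p}}\le Ch$, uniformly in $h\in(0,h_0]$. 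The first two steps are essentially bookkeeping once the exponential integrability of the preceding sections is in hand; the real work lies in reconciling the superlinear, random blow-up of $\nabla F$ with a clean first-order rate without conceding an arbitrarily small power of $h$.
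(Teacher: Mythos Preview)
Your overall decomposition into transport plus local error is sound, and you correctly pinpoint the central obstacle: once the local martingale errors $R_k^{(2)}$ of size $h^{3/2}$ are weighted by the non-adapted factors $\prod_{j>k}(1+h\lambda_j)$, the martingale structure is lost, and the naive H\"older bound $\sum_k\|\Lambda_k\|_{L^{2p'}}\|R_k^{(2)}\|_{L^{2p}}\lesssim N^h\cdot h^{3/2}=h^{1/2}$ loses exactly half an order. You also correctly identify that the exponential integrability of both solutions controls $\mathbb E[\exp(\theta\, h\sum_k\lambda_k)]$ for every $\theta$; this is indeed how the paper bounds the transport factors (cf.\ \eqref{SE3}).

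The gap is in your resolution of the lost-martingale problem. The phrase ``conditioning before, rather than after, they are weighted'' does not name a workable mechanism: if you take expectations step by step in $|e_{n+1}|^{2p}=|A_n+R_n|^{2p}$, the cross term with $R_n^{(2)}$ does vanish (since $A_n$ is $\mathscr F_{t_n}$-measurable), but the surviving term $\mathbb E[(1+h\lambda_n)^{2p}|e_n|^{2p}]$ does not factor, because $\lambda_n$ is unbounded and correlated with $e_n$; no standard mean-square lemma for non-monotone coefficients closes this recursion at order $h$ --- this is precisely the order barrier of \cite[Remark 3.1]{HJ14}. The paper's device is different and specific. It first proves the suboptimal rate $\|e_n\|_{L^{2p}}\le Ch^{1/2}$ by exactly the pathwise Gronwall-plus-exponential-integrability argument you sketch (Lemma~\ref{lem2}), and then upgrades via the \emph{Malliavin integration by parts formula}. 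After iterating a recursion for $\mathcal S_n^{2p}:=\|P_n-P(t_n)\|^{2p}+\|Q_n-Q(t_n)\|^{2p}$, the problematic contribution is
\[
\mathbb E\Bigl[\Bigl(\prod_{i>j}(1+C(h+G_i))\Bigr)\,\|Q_j-Q(t_j)\|^{2p-2}(Q(t_j)-Q_j)^\top\sigma_k\int_{t_j}^{t}\ud W_s^k\Bigr],
\]
and the duality $\mathbb E[\Psi\int_{t_j}^{t}\ud W_s^k]=\mathbb E[\int_{t_j}^{t} D_r^k\Psi\,\ud r]$ converts the stochastic integral into a Malliavin derivative. Because the factor $\|Q_j-Q(t_j)\|^{2p-2}(Q(t_j)-Q_j)$ is $\mathscr F_{t_j}$-measurable, its Malliavin derivative vanishes for $r>t_j$, so only $D_r^k$ of the non-adapted product $\prod_{i>j}(1+C(h+G_i))$ survives; a separate estimate (Lemma~\ref{DH}), relying on the Malliavin regularity of both $X(t)$ and $X_n$ from Lemmas~\ref{NS} and~\ref{NDI}, shows this derivative has moments bounded uniformly in $h$. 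The outcome is a bound $Ch^2(\mathbb E\|Q_j-Q(t_j)\|^{2p})^{(2p-1)/(2p)}$, and Young's inequality plus a final discrete Gronwall closes the argument at order $h$. Neither the preliminary $h^{1/2}$ bootstrap nor the Malliavin integration-by-parts step appears in your plan, and the latter is the essential new ingredient.
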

Up to now, there already exist a lot of strong convergence results of numerical approximations for SDEs with monotone coefficients, see  e.g. \cite{SS16,TZ13} and reference therein.
 For SDEs with non-globally monotone coefficients driven by additive noises, we are only aware that the authors in \cite{HJ14} obtain the strong convergence rate of the stopped increment-tamed Euler--Maruyama scheme. 
 To the best of our knowledge, no optimal strong convergence rate results of the numerical schemes are known for such equations.
In Theorem \ref{SC1}, we solve the problem emerged from  \cite[Remark 3.1]{HJ14} and overcome the order barrier in the strong error analysis in terms of scheme \eqref{split sol} for equation \eqref{SDE1}. The key ingredients in proving the optimal convergence rate result lie on two aspects, one being to deduce a priori strong error estimate of scheme \eqref{split sol} by the exponential integrability properties, another being the applications of the regularity estimate in Malliavin sense and Malliavin integration by parts formula.

In the second stage, we extend the strong convergence result to the convergence result in density for scheme \eqref{split sol}.
\begin{tho}\label{order}
Let Assumptions \ref{F2}-\ref{F4} hold, $\alpha>0, \beta\ge0$ and $1<p<\infty$. Then for $\alpha>\beta+2m/q+1,\, 1/p+1/q=1$, it holds that
\begin{equation*}
\sup_{y\in\mathbb{R}^{2m}}\left\|(1-\Delta)^{\beta/2}\delta_y\circ X_{N^h}-(1-\Delta)^{\beta/2}\delta_y\circ X(T)\right\|_{-\alpha,p}=\mathcal{O}(h),~as~h\rightarrow0.
\end{equation*}
\end{tho}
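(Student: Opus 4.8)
The plan is to interpret both $(1-\Delta)^{\beta/2}\delta_y\circ X_{N^h}$ and $(1-\Delta)^{\beta/2}\delta_y\circ X(T)$ as Watanabe distributions --- the norm $\|\cdot\|_{-\alpha,p}$ being that of the Watanabe distribution space $\mathbb{D}^{-\alpha,p}$, dual to the Malliavin--Sobolev scale --- and to estimate their difference by a first-order Taylor expansion along the segment joining $X_{N^h}$ to $X(T)$, in the spirit of the Donsker-delta approximation result of \cite{HW96}. Recall that for a non-degenerate $\mathbb{R}^{2m}$-valued Wiener functional $X$ (i.e. $X\in\mathbb{D}^{\infty}$ and $(\det\gamma_X)^{-1}\in L^{\infty-}(\Omega)$) the pullback $S\mapsto S\circ X$ extends from $\mathcal{S}(\mathbb{R}^{2m})$ to $\mathcal{S}'(\mathbb{R}^{2m})$ and satisfies, for every $s\in\mathbb{R}$ and $r\in(1,\infty)$, a continuity estimate $\|S\circ X\|_{-s,r}\le C\,\|S\|_{H^{-s}_{r}(\mathbb{R}^{2m})}$, where $C$ depends only on $s,r$ and on finitely many Malliavin--Sobolev norms of $DX$ together with $\|(\det\gamma_X)^{-1}\|_{L^{q}(\Omega)}$ --- crucially no state-space weight enters when $X$ has rapidly decaying density, which here follows from the exponential integrability. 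The argument rests on three facts from the earlier sections, all needed \emph{uniformly in $h\in(0,h_0]$}: (a) the Malliavin regularity estimates $\sup_{h}\sup_{n\le N^h}\|X_n\|_{k,q}<\infty$ and $\|X(T)\|_{k,q}<\infty$ for all $k\in\mathbb{N}$, $q\ge1$; (b) the non-degeneracy bounds $\sup_{h}\|(\det\gamma_{N^h})^{-1}\|_{L^{r}(\Omega)}<\infty$, from the positive lower bound on $\lambda_{\min}(\gamma_{N^h})$, and $\|(\det\gamma_{X(T)})^{-1}\|_{L^{r}(\Omega)}<\infty$, from H\"ormander's theorem under Assumptions \ref{F2}--\ref{F4}, for all $r\ge1$; (c) the strong convergence rate in every Malliavin--Sobolev norm, $\sup_{n\le N^h}\|X_n-X(t_n)\|_{k,q}\le C_{k,q}\,h$, which upgrades Theorem \ref{SC1} by differentiating the error equation and re-running the exponential-integrability and Malliavin integration-by-parts arguments.

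I would first show that the interpolants $Y^h_\theta:=X(T)+\theta\bigl(X_{N^h}-X(T)\bigr)$, $\theta\in[0,1]$, are non-degenerate uniformly in $(\theta,h)$. Since $DY^h_\theta=(1-\theta)DX(T)+\theta\,DX_{N^h}$ and, by (c), $DX_{N^h}\to DX(T)$ in $L^{r}(\Omega;H)$ for every $r$, we have $\gamma_{Y^h_\theta}\to\gamma_{X(T)}$ in a sufficiently strong sense; combining this with (a), (b) and the non-degeneracy of $X(T)$, a standard perturbation lemma gives, for $h_0$ small enough, $\sup_{\theta\in[0,1],\,h\in(0,h_0]}\|(\det\gamma_{Y^h_\theta})^{-1}\|_{L^{r}(\Omega)}<\infty$ for all $r\ge1$. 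Writing $T_y:=(1-\Delta)^{\beta/2}\delta_y$ and using the chain rule for the pullback (justified by mollifying $T_y$ and passing to the limit via the continuity estimate), one obtains the identity, valid in $\mathbb{D}^{-\alpha,p}$,
\begin{equation*}
T_y\circ X_{N^h}-T_y\circ X(T)=\int_0^1 \bigl(\nabla T_y\bigr)\circ Y^h_\theta\cdot\bigl(X_{N^h}-X(T)\bigr)\,\ud\theta .
\end{equation*}
Fix $w$ with $\beta+2m/q<w<\alpha-1$, possible precisely because $\alpha>\beta+2m/q+1$. Then $T_y\in H^{-w}_{p}(\mathbb{R}^{2m})$ and $\nabla T_y\in H^{-(w+1)}_{p}(\mathbb{R}^{2m})$ with $w+1<\alpha$, so the pullback estimate gives $(\nabla T_y)\circ Y^h_\theta\in\mathbb{D}^{-(w+1),p}$ with norm bounded uniformly in $\theta$ and $h$. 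Multiplying by $X_{N^h}-X(T)\in\mathbb{D}^{\infty}$ (over which $\mathbb{D}^{-(w+1),p}$ is a module, with the corresponding norm estimate), integrating in $\theta$, using the embedding $\mathbb{D}^{-(w+1),p}\hookrightarrow\mathbb{D}^{-\alpha,p}$, and then invoking (c), I obtain
\begin{equation*}
\bigl\|T_y\circ X_{N^h}-T_y\circ X(T)\bigr\|_{-\alpha,p}\le C\,\|\nabla T_y\|_{H^{-(w+1)}_{p}}\,\bigl\|X_{N^h}-X(T)\bigr\|_{k_0,q_0}\le C\,\|\nabla T_y\|_{H^{-(w+1)}_{p}}\,h ,
\end{equation*}
where $C$ and the indices $k_0,q_0$ do not depend on $h$ or $y$.

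It remains to take the supremum over $y$. Since $\delta_0\in H^{-s}_{p}(\mathbb{R}^{2m})$ for every $s>2m/q$ --- the dual statement to the Sobolev embedding $H^{s}_{q}(\mathbb{R}^{2m})\hookrightarrow C_b(\mathbb{R}^{2m})$, using $1/p+1/q=1$ --- our choice of $w$ indeed gives $\nabla T_y=(1-\Delta)^{\beta/2}\nabla\delta_y\in H^{-(w+1)}_{p}$, and $\|\nabla T_y\|_{H^{-(w+1)}_{p}}=\|(1-\Delta)^{\beta/2}\nabla\delta_0\|_{H^{-(w+1)}_{p}}$ is independent of $y$ by translation invariance of $H^{-(w+1)}_{p}$ and commutation of $(1-\Delta)^{\beta/2}$ with translations; together with the $y$-independence of the pullback constant noted above, the last display holds with a $y$-uniform right-hand side of order $h$, so taking $\sup_{y\in\mathbb{R}^{2m}}$ completes the proof.

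The step I expect to be the main obstacle is securing the uniform-in-$h$ non-degeneracy and transferring it to the interpolants: the lower bound on $\lambda_{\min}(\gamma_{N^h})$ must remain stable as $h\to0$, which is delicate because the noise in \eqref{SDE1} is degenerate and one must quantify how the H\"ormander-type bracket structure of \eqref{SDE1} is reproduced by the splitting scheme \eqref{split sol}; and the perturbation lemma promoting non-degeneracy from $X(T)$ and $X_{N^h}$ to the whole family $\{Y^h_\theta\}_{\theta\in[0,1]}$ genuinely needs the Malliavin-norm convergence (c) in every $\mathbb{D}^{k,q}$, the $L^{2p}(\Omega)$ rate of Theorem \ref{SC1} alone being insufficient. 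Once these uniform estimates are in place, the Watanabe-distribution computation above is routine.
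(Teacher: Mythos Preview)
Your approach is correct and follows essentially the same route as the paper, but where the paper cites the Hu--Watanabe machinery (Lemma~\ref{pdf0}, Proposition~\ref{pdf}, Remark~\ref{pdf1}) as black boxes, you effectively re-derive their content via the interpolation $Y^h_\theta$ and the Taylor expansion of the pullback. The paper's proof is a three-line assembly: Theorem~\ref{UD} gives the \emph{a priori} blowing-up estimate $\|(\det\gamma_{N^h})^{-1}\|_{L^p}=\mathcal{O}(h^{-\nu(p)})$; Lemma~\ref{pdf0} then upgrades this, using the $\mathbb{D}^{1,p}$-convergence from Theorem~\ref{MDC} and the non-degeneracy of $X(T)$ from Lemma~\ref{NS}, to the uniform bound $\sup_{h\le h_0}\|(\det\gamma_{N^h})^{-1}\|_{L^p}<\infty$; finally Proposition~\ref{pdf} and Remark~\ref{pdf1} (which internally run exactly your mollification-plus-Taylor argument) deliver the rate. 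Your proposal makes the mechanism transparent and, by proving non-degeneracy along the whole segment $\{Y^h_\theta\}$, is slightly more self-contained; the paper's version is shorter because it outsources this to \cite{HW96}.

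One point deserves correction. In your item (b) you write that the uniform bound $\sup_h\|(\det\gamma_{N^h})^{-1}\|_{L^r}<\infty$ follows ``from the positive lower bound on $\lambda_{\min}(\gamma_{N^h})$''. It does not: the paper's lower bound (Theorem~\ref{UD}) is of order $h^3$ and hence degenerates as $h\to0$, giving only $\|(\det\gamma_{N^h})^{-1}\|_{L^p}=\mathcal{O}(h^{-\nu(p)})$. The uniform bound is obtained by perturbation from the limit $X(T)$ --- precisely Lemma~\ref{pdf0} --- which is the same ``standard perturbation lemma'' you invoke for the interpolants. So your last paragraph is right to flag this as the crux: the direct eigenvalue estimate alone is insufficient, and the Malliavin--Sobolev convergence (your item (c), the paper's Theorem~\ref{MDC}) is genuinely needed to close the argument, both in your version and in the paper's.
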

Here $\delta_y\circ X_{N^h}$ and $\delta_y\circ X(T)$ are Donsker's delta functions, and $\|\cdot\|_{-\alpha,p}$ denotes the norm in the Banach space $\mathbb{D}^{-\alpha,p}=(\mathbb{D}^{\alpha,q})^{\prime}$.
To the best of our knowledge, Theorem \ref{order} is the first convergence rate result in density of numerical approximations for SDEs with non-globally monotone coefficients and degenerate additive noises. 
The key ingredients in proving this convergence result are
the strong convergence analysis in every Malliavin--Sobolev norm and 
the uniform non-degeneracy property of $X_{N^h}$.
By the regularity estimates of exact and numerical solutions and Theorem \ref{SC1}, we first obtain the strong convergence in every Malliavin--Sobolev norm.  
Then combining the error estimate in Malliavin-Sobolev space $\mathbb{D}^{1,p}$ with 
$\left\|\det(\gamma_{N^h})^{-1}\right\|_{L^p(\Omega)}=\mathcal{O}\left(h^{-\nu(p)}\right)$,
we deduce the uniform non-degeneracy property of $X_{N^h}$, that is, for sufficiently small positive constant $h_0$ and for any $p\ge1$,
\begin{equation*}
 \sup_{h\in(0,h_0]}\left\|\det(\gamma_{N^h})^{-1}\right\|_{L^p(\Omega)}<\infty.
 \end{equation*}
Using the approximation result of Donsker's delta function, we finally show that the convergence rate in density  coincides with the optimal strong convergence rate for scheme \eqref{split sol}. 
We would like to mention that, the approaches to deriving the optimal strong convergence rate 
 and to deducing the convergence in density are also applicable to a number of other numerical approximations for general SDEs.

The outline of this paper is as follows. Section \ref{S2} is devoted to an introduction of Malliavin calculus, the regularity of probability laws and main assumptions on equation \eqref{SDE1}. In Section \ref{S3}, we present the exponential integrability property of the exact solution, as well as the existence and smoothness of its density function.  In Section \ref{S4}, we propose the splitting AVF scheme and show the exponential integrability property and the regularity estimate of the numerical solution in Malliavin sense. The optimal strong convergence rate of scheme \eqref{split sol} is shown in Section \ref{S5}. 
In Section \ref{S6}, we show that the numerical solution is uniformly non-degenerate and admits a smooth density function. Combined with the strong convergence in every Malliavin--Sobolev norm, we derive the optimal convergence rate of the numerical scheme in density.
Finally, several numerical experiments are presented in Section \ref{S7} to support our theoretical analysis.

\section{Preliminaries}\label{S2}
In this section, we introduce some frequently used notations and some basic elements from Malliavin calculus on the Wiener space and the regularity of probability laws, as well as main assumptions on equation \eqref{SDE1}.

Given a matrix $A\in\mathbb{R}^{m\times m}$, denote by $\lambda_i(A)$ the $i$th eigenvalue, $i=1,\cdots,m$, by
$\lambda_{min}(A)$ the smallest eigenvalue, and by
$\rho(A)$ the spectral radius of $A$. 
We use $\mathbb{H}$ to denote the Hilbert space $L^2([0,T];\mathbb{R}^d)$ endowed with the inner product $\langle g,h\rangle_{\mathbb{H}}=\int_0^T \langle g(t),h(t)\rangle_{\mathbb{R}^d}\,\ud t,\,\forall\, g,\,h\in\mathbb{H}$.  
For $\vec{l}=(l_1,\ldots,l_m)$ with $l_i\ge1,\,i=1,\ldots,m$ and $x=(x_1,\ldots,x_m)$, denote $\lfloor x\rfloor^{\vec{l}}:=\sum_{i=1}^m|x_i|^{l_i}$ and $|\,\vec{l}\,|_{\infty}:=\max_{1\le i\le m}l_i$. Throughout the paper,
we denote by $C$ a generic constant which may depend on several parameters but never on the stepsize $h$ and may change from occurrence to occurrence.

\subsection{Malliavin calculus on the Wiener space}
Some basic ingredients of Malliavin calculus are presented in this part. For further results, we refer to  \cite{YZH17,IW84,DN06}. By identifying $W(t,\omega)$ with the value $\omega(t)$ at time $t$ of an element $\omega\in C_0([0,T];\mathbb{R}^d)$, we take $\Omega=C_0([0,T];\mathbb{R}^d)$ as the Wiener space and $\mathbb{P}$ as the Wiener measure.
 For $h\in\mathbb{H}$, we set $W(h):=\sum_{k=1}^d\int_0^Th_k(t)\,\ud W^k_t$. We denote
$\mathcal{S}$ the class of smooth random variables such that $F\in\mathcal{S}$ has the form 
\begin{equation}\label{F}
F=f(W(h_1),\ldots,W(h_n)),
\end{equation}
where $f$ belongs to $C_p^\infty(\mathbb{R}^n)$, $h_i\in \mathbb{H},\, i=1,\ldots,n, \,n\ge 1$.
The derivative of a smooth random variable $F$ of the form \eqref{F} is an $\mathbb{H}$-valued random variable given by 
$DF=\sum_{i=1}^n\frac{\partial f}{\partial x_i}(W(h_1),\ldots,W(h_n))h_i.$
 For any $p\ge 1$, we denote the domain of $D$ in $L^p(\Omega)$ by $\mathbb{D}^{1,p}$, meaning that $\mathbb{D}^{1,p}$ is the closure of $\mathcal{S}$ with respect to the norm
$\|F\|_{1,p}=\left(\mathbb{E}\left[|F|^p+\|DF\|_\mathbb{H}^p\right]\right)^{\frac{1}{p}}.$
We define the iteration of the operator $D$ in such a way that for a smooth random variable $F$, the iterated derivative $D^\alpha F$ is a random variable with values in $\mathbb{H}^{\bigotimes \alpha}$.
Then for any $p\ge 1$ and integer $\alpha\ge1$, we denote by $\mathbb{D}^{\alpha,p}$ the completion of $\mathcal{S}$ with respect to the norm
\begin{equation*}
\|F\|_{\alpha,p}=\left(\mathbb{E}\left[|F|^p+\sum_{j=1}^{\alpha}\|D^jF\|_{\mathbb{H}^{\bigotimes j}}^p\right]\right)^{\frac{1}{p}}.
\end{equation*}
Define
\begin{equation}\label{LDD}
L^{\infty-}(\Omega):=\bigcap_{p\ge 1} L^p(\Omega),\qquad\mathbb{D}^{\alpha,\infty}:=\bigcap_{p\ge 1} \mathbb{D}^{\alpha,p},\qquad\mathbb{D}^\infty:=\bigcap_{p\ge 1}\bigcap_{\alpha\ge 1} \mathbb{D}^{\alpha,p}
\end{equation}
to be topological projective limits. As in the Schwartz theory of distributions, we introduce the topological dual of the Banach space $\mathbb{D}^{\alpha,p}$, by
$\mathbb{D}^{-\alpha,q}=(\mathbb{D}^{\alpha,p})^{\prime},$
where $1/p+1/q=1$, and the space of generalized Wiener functionals, by
$\mathbb{D}^{-\infty}=\bigcup_{p\ge 1}\bigcup_{\alpha\ge 1}\mathbb{D}^{-\alpha,p}.$ The natural coupling of $G\in\mathbb{D}^{\alpha,p}$ and $\Phi\in\mathbb{D}^{-\alpha,q}$ with $1/p+1/q=1$ or that of $G\in\mathbb{D}^{\infty}$ and $\Phi\in\mathbb{D}^{-\infty}$ is denoted by $\mathbb{E}[G\cdot\Phi]$.
Similarly, let $V$ be a real separable Hilbert space and we define the space $\mathbb{D}^{\alpha,p}(V)$ as the completion of $V$-valued smooth random variables with respect to the norm
\begin{equation*}
\|F\|_{\alpha,p,V}=\left(\mathbb{E}\left[\|F\|_V^p+\sum_{j=1}^{\alpha}\|D^jF\|_{\mathbb{H}^{\bigotimes j}\bigotimes V}^p\right]\right)^{\frac{1}{p}}.
\end{equation*}
When we consider $V$-valued functional, the corresponding spaces in \eqref{LDD} are denoted by $L^{\infty-}(\Omega;V)$, $\mathbb{D}^{\alpha,\infty}(V)$ and $\mathbb{D}^{\infty}(V)$, respectively.

\subsection{Regularity of probability laws}
In order to study the density function of the numerical approximation, we begin with imposing the non-degeneracy condition. 
\begin{Def}\label{Def1}
A random vector $F=(F^1,F^2,\cdots,F^m)$ whose components are in $\mathbb{D}^\infty $ is non-degenerate if the Malliavin covariance matrix 
$\gamma_F:=(\langle DF^i,DF^j\rangle_{\mathbb{H}})_{1\le i,j\le m}$
is invertible a.s. and
$(\det \gamma_F)^{-1}\in L^{\infty-}(\Omega).$
\end{Def}

It is well known that if $F$ is non-degenerate, then for every $T\in\mathcal{S}'(\mathbb{R}^m)$, $T\circ F$ can be defined in $\mathbb{D}^{-\infty}$ and 
$T\circ F\in\bigcap_{p\ge1}\bigcup_{\alpha\ge 1}\mathbb{D}^{-\alpha,p}$ (see e.g. \cite{HW96}). Here, $\mathcal{S}'(\mathbb{R}^m)$ is the space of tempered distributions.
In the particular case that $T=(1-\Delta)^{\beta/2}\delta_y,$ $\beta\ge 0,$ $y\in\mathbb{R}^m$, if $\alpha>\beta+\frac{m}{q}, 1/p+1/q=1$, then
\begin{equation}\label{delta}
T\circ F=(1-\Delta)^{\beta/2}\delta_y\circ F\in\mathbb{D}^{-\alpha,p}.
\end{equation}
$\delta_y\circ F$ is called a Donsker's delta function.  Notice that 
$\mathbb{E}[\delta_y\circ F]=\rho_F(y),$
where $\rho_F(y)$ is the density at $y$ of the probability law of $F$ (see \cite[Section 4]{IW84} for a detailed discussion). We close this part with introducing some results in \cite{HW96}, which are useful for  deriving  the convergence in density of  the numerical approximation in Section \ref{S6}.
\begin{lem}\label{pdf0}
Assume that $H_n,\,H\in \mathbb{D}^{1,\infty}(\mathbb{R}^{m})$ satisfy the following conditions:

(i) there exists $\kappa>0$ such that for any $1\le p<\infty$,
$\lim_{n\rightarrow\infty}\|H_n-H\|_{1,p,\mathbb{R}^{m}}=\mathcal{O}(n^{-\kappa}),$

(ii) $(\det\gamma_H)^{-1}\in L^{\infty-}(\Omega),$

(iii) for any $1\le p<\infty$, there exists $\nu(p)>0$ such that $\|\det(\gamma_{H_n})^{-1}\|_{L^p(\Omega)}=\mathcal{O}\left(n^{\nu(p)}\right)$ as $n\rightarrow\infty$.

Then, for any $1\le p<\infty$, we have
$\sup_n\left\|\det(\gamma_{H_n})^{-1}\right\|_{L^p(\Omega)}<\infty.$
\end{lem}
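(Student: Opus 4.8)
The plan is to reduce the statement to a uniform small-ball estimate for the determinants $D_n:=\det\gamma_{H_n}$ near the origin, and then to run a layer-cake estimate for $\mathbb{E}[D_n^{-p}]$ cut at a level $\varepsilon_n$ that decays slightly more slowly than the convergence rate $n^{-\kappa}$ from condition (i); throughout, $D_n$ and $D:=\det\gamma_H$ are a.s.\ positive, being determinants of positive semidefinite matrices whose a.s.\ invertibility is guaranteed by (ii)--(iii). First I would establish $L^r$-convergence of the determinants: from (i), $DH_n^i\to DH^i$ in $L^r(\Omega;\mathbb{H})$ at rate $n^{-\kappa}$ for every $r\ge1$, while $\sup_n\|H_n\|_{1,r,\mathbb{R}^m}<\infty$; a Cauchy--Schwarz estimate then gives $\|\gamma_{H_n}^{ij}-\gamma_H^{ij}\|_{L^r(\Omega)}\le C_r n^{-\kappa}$ together with uniform-in-$n$ $L^r$-bounds on all entries, and since $\det$ is multilinear in the matrix entries, a telescoping argument combined with H\"older's inequality yields $\|D_n-D\|_{L^r(\Omega)}\le C_r n^{-\kappa}$ for every $r\ge1$.

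Next I would derive the small-ball bound. For $\varepsilon>0$, $\{D_n<\varepsilon\}\subset\{D<2\varepsilon\}\cup\{|D_n-D|>\varepsilon\}$; condition (ii) gives $\mathbb{P}(D<2\varepsilon)=\mathbb{P}\big(D^{-1}>(2\varepsilon)^{-1}\big)\le C_s\varepsilon^{s}$ for every $s\ge1$, and the previous step with Markov's inequality gives $\mathbb{P}(|D_n-D|>\varepsilon)\le C_r\varepsilon^{-r}n^{-r\kappa}$ for every $r\ge1$. Fixing $\theta\in(0,1)$, for all $\varepsilon\ge n^{-\theta\kappa}$ and all $s,N\ge1$ we then obtain
\begin{equation*}
\mathbb{P}(D_n<\varepsilon)\le C_s\varepsilon^{s}+C_N n^{-N},
\end{equation*}
because choosing $r$ large makes $\varepsilon^{-r}n^{-r\kappa}\le n^{-(1-\theta)r\kappa}$ smaller than any prescribed power of $n$.

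Finally, I would fix $p\ge1$, put $\varepsilon_n:=n^{-\theta\kappa}$, and split $\mathbb{E}[D_n^{-p}]=\mathbb{E}\big[D_n^{-p}\mathbf{1}_{\{D_n\ge\varepsilon_n\}}\big]+\mathbb{E}\big[D_n^{-p}\mathbf{1}_{\{D_n<\varepsilon_n\}}\big]$. In the first summand, the event $\{D_n^{-p}\mathbf{1}_{\{D_n\ge\varepsilon_n\}}>\lambda\}$ forces $\varepsilon_n\le D_n<\lambda^{-1/p}$, hence $\lambda<\varepsilon_n^{-p}$, so by the layer-cake formula and the small-ball bound (applicable since $\lambda^{-1/p}\ge\varepsilon_n$ there), choosing $s>p$ and $N>\theta\kappa p$,
\begin{equation*}
\mathbb{E}\big[D_n^{-p}\mathbf{1}_{\{D_n\ge\varepsilon_n\}}\big]\le 1+\int_1^{\varepsilon_n^{-p}}\big(C_s\lambda^{-s/p}+C_N n^{-N}\big)\,\ud\lambda\le 1+C_s'+C_N n^{\theta\kappa p-N},
\end{equation*}
which is bounded uniformly in $n$. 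In the second summand, Cauchy--Schwarz with condition (iii) and the small-ball bound at $\varepsilon=\varepsilon_n$ give
\begin{equation*}
\mathbb{E}\big[D_n^{-p}\mathbf{1}_{\{D_n<\varepsilon_n\}}\big]\le\big\|D_n^{-1}\big\|_{L^{2p}(\Omega)}^{p}\,\mathbb{P}(D_n<\varepsilon_n)^{1/2}\le C\,n^{p\nu(2p)-M/2}
\end{equation*}
for $M$ as large as we wish, hence this tends to $0$ once $M>2p\nu(2p)$. Therefore $\sup_n\mathbb{E}[D_n^{-p}]<\infty$, and taking $p$-th roots gives the claim for every $p\in[1,\infty)$.

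The main obstacle is to balance the two opposing growths built into the hypotheses: (iii) controls $\|D_n^{-1}\|_{L^{p}}$ only up to a polynomial factor $n^{\nu(p)}$, whereas (i) controls $\|D_n-D\|_{L^r}$ only at the fixed algebraic rate $n^{-\kappa}$. Everything hinges on the choice $\varepsilon_n=n^{-\theta\kappa}$ with $0<\theta<1$: it is large enough relative to $n^{-\kappa}$ that $\{|D_n-D|>\varepsilon_n\}$ has super-polynomially small probability, which defeats the polynomial blow-up coming from (iii), and at the same time small enough that on $\{D_n\ge\varepsilon_n\}$ the non-degeneracy of the limit $H$ alone already controls $\mathbb{E}[D_n^{-p}\mathbf{1}_{\{D_n\ge\varepsilon_n\}}]$ uniformly in $n$.
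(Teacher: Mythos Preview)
Your proof is correct. The paper itself does not prove this lemma: it is quoted from \cite{HW96} without argument, so there is nothing to compare against in the present paper. Your approach---reducing to $L^r$-convergence of the determinants via multilinearity, deriving a uniform small-ball estimate $\mathbb{P}(D_n<\varepsilon)\le C_s\varepsilon^s+C_Nn^{-N}$ valid for $\varepsilon\ge n^{-\theta\kappa}$, and then splitting $\mathbb{E}[D_n^{-p}]$ at the threshold $\varepsilon_n=n^{-\theta\kappa}$ so that hypothesis (iii) handles the tail via Cauchy--Schwarz---is the standard route and matches the argument in Hu--Watanabe. The key balancing idea you identify (choosing $\theta<1$ so that the $|D_n-D|>\varepsilon_n$ probability is super-polynomially small, hence beats the polynomial growth $n^{\nu(2p)}$ from (iii)) is exactly the point of the lemma.
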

\begin{pro}\label{pdf}
Let $H_n,\, n=1,2,\cdots$ and $H$ be smooth d-dimensional Wiener functionals, i.e.,
$H_n,\, H\in \mathbb{D}^{\infty}(\mathbb{R}^{m}),$ $\alpha>0,\,\beta\ge0,\,\delta>0$ and $1<p<\infty$.
Suppose that $H_n$ and $H$ satisfy the following conditions:

(i) $H_n$ approximates $H$ in $\mathbb{D}^{\infty}(\mathbb{R}^{m})$ with order $\kappa$ $(\kappa>0)$ in the sense that for every $1\le p<\infty$ and $\alpha>0$,
$\lim_{n\rightarrow\infty}\|H_n- H\|_{\alpha,p,\mathbb{R}^{m}}=\mathcal{O}(n^{-\kappa}).$

(ii) $H$ is non-degenerate, i.e.,
$\left(\det\gamma_H\right)^{-1}\in L^{\infty-}(\Omega).$

Then for $\alpha>\beta+m/q+1, 1/p+1/q=1$, 
\begin{equation}\label{pdf11}
\sup_{y\in\mathbb{R}^{m}}\left\|\left[(1-\Delta)^{\beta/2}\phi_{n^{-\delta}}\right](H_n-y)-(1-\Delta)^{\beta/2}\delta_y\circ H\right\|_{-\alpha,p}=\mathcal{O}\left(n^{-\kappa\land\delta}\right),
\end{equation}
as $n\rightarrow\infty$, where
$\phi_\rho(x)=\left(2\pi\rho^2\right)^{-m/2}e^{-\frac{\|x\|^2}{2\rho^2}},\, x\in\mathbb{R}^{m},\,\rho>0.$
\end{pro}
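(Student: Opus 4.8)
The plan is to prove \eqref{pdf11} by a triangle inequality that inserts the mollified functional built from the \emph{limit} $H$. Set $g_{n}:=(1-\Delta)^{\beta/2}\phi_{n^{-\delta}}$, which for each $n$ is a Schwartz function on $\mathbb{R}^{m}$ with Fourier transform proportional to $\xi\mapsto(1+|\xi|^{2})^{\beta/2}e^{-n^{-2\delta}|\xi|^{2}/2}$. Then
\begin{equation*}
\bigl\|g_{n}(H_{n}-y)-(1-\Delta)^{\beta/2}\delta_{y}\circ H\bigr\|_{-\alpha,p}\le A_{n}(y)+B_{n}(y),
\end{equation*}
where $A_{n}(y):=\bigl\|g_{n}(H_{n}-y)-g_{n}(H-y)\bigr\|_{-\alpha,p}$ and $B_{n}(y):=\bigl\|g_{n}(H-y)-(1-\Delta)^{\beta/2}\delta_{y}\circ H\bigr\|_{-\alpha,p}$, and it suffices to establish $\sup_{y}B_{n}(y)=\mathcal{O}(n^{-\delta})$ and $\sup_{y}A_{n}(y)=\mathcal{O}(n^{-\kappa})$.

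I would handle $B_{n}$ first, as it involves only the non-degenerate functional $H$. Using $\phi_{\rho}(x-y)=\int_{\mathbb{R}^{m}}\phi_{\rho}(z)\,\delta_{y+z}(x)\,\ud z$ one gets, in $\mathbb{D}^{-\alpha,p}$,
\begin{equation*}
g_{n}(H-y)-(1-\Delta)^{\beta/2}\delta_{y}\circ H=\int_{\mathbb{R}^{m}}\phi_{n^{-\delta}}(z)\Bigl[(1-\Delta)^{\beta/2}\delta_{y+z}\circ H-(1-\Delta)^{\beta/2}\delta_{y}\circ H\Bigr]\ud z .
\end{equation*}
Because $\partial_{y_{i}}\bigl[(1-\Delta)^{\beta/2}\delta_{y}\bigr]=-(1-\Delta)^{\beta/2}(\partial_{i}\delta)_{y}$ has order $\beta+1$, the hypothesis $\alpha>\beta+m/q+1$ together with $(\det\gamma_{H})^{-1}\in L^{\infty-}(\Omega)$ and \eqref{delta} shows that $y\mapsto(1-\Delta)^{\beta/2}\delta_{y}\circ H$ is Lipschitz from $\mathbb{R}^{m}$ into $\mathbb{D}^{-\alpha,p}$, with Lipschitz constant depending on $H$ only through its Malliavin--Sobolev norms and $\|(\det\gamma_{H})^{-1}\|_{L^{r}(\Omega)}$. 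Hence $B_{n}(y)\le C\int_{\mathbb{R}^{m}}\phi_{n^{-\delta}}(z)|z|\,\ud z=Cn^{-\delta}\int_{\mathbb{R}^{m}}\phi_{1}(w)|w|\,\ud w$, uniformly in $y$.

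For $A_{n}$ the crucial point is a bound that is \emph{uniform in the mollification scale} $n^{-\delta}$; this uniformity is exactly what produces the clean rate $n^{-\kappa}$ rather than one polluted by powers of $n^{\delta}$. I would Taylor-expand $g_{n}$ about the base point $H-y$,
\begin{equation*}
g_{n}(H_{n}-y)-g_{n}(H-y)=\sum_{1\le|\gamma|\le M}\frac{1}{\gamma!}\,(\partial^{\gamma}g_{n})(H-y)\,(H_{n}-H)^{\gamma}+\mathrm{Rem}_{M},
\end{equation*}
so that every non-remainder term is composed with the \emph{non-degenerate} functional $H$. Writing $\partial^{\gamma}g_{n}=(1-\Delta)^{\alpha/2}u_{n}^{\gamma}$ with $u_{n}^{\gamma}=(\partial^{\gamma'}G_{\alpha-\beta})\ast(\partial^{\gamma''}\phi_{n^{-\delta}})$, $\gamma'+\gamma''=\gamma$, where $G_{s}:=(1-\Delta)^{-s/2}\delta_{0}$ is the Bessel kernel, and taking $|\gamma'|$ as large as the integrability $\partial^{\gamma'}G_{\alpha-\beta}\in L^{p}(\mathbb{R}^{m})$ permits, the normalization $\|\phi_{n^{-\delta}}\|_{L^{1}(\mathbb{R}^{m})}=1$ forces $\|u_{n}^{\gamma}\|_{L^{p}(\mathbb{R}^{m})}\le C$ uniformly in $n$ whenever $|\gamma|\le\alpha-\beta-m/q$ — in particular for $|\gamma|=1$, which is exactly the hypothesis $\alpha>\beta+m/q+1$. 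Combining this with the Watanabe composition inequality for $H$ (namely $\|[(1-\Delta)^{\alpha/2}u]\circ H\cdot v\|_{-\alpha,p}\le C\|u\|_{L^{p}(\mathbb{R}^{m})}\|v\|_{\alpha,s}$, with $C$ depending on $H$ through its non-degeneracy and Sobolev norms) and with hypothesis (i), which gives $\|(H_{n}-H)^{\gamma}\|_{\alpha,s}\le\|H_{n}-H\|_{\alpha,s|\gamma|,\mathbb{R}^{m}}^{|\gamma|}=\mathcal{O}(n^{-\kappa|\gamma|})$, shows that each $|\gamma|=1$ term is $\mathcal{O}(n^{-\kappa})$ and that higher-order terms carry the extra decay $n^{-\kappa(|\gamma|-1)}$; the remainder $\mathrm{Rem}_{M}$ is estimated crudely in $L^{p}(\Omega)\subset\mathbb{D}^{-\alpha,p}$ by $\|\nabla^{M+1}g_{n}\|_{L^{\infty}}\bigl\||H_{n}-H|^{M+1}\bigr\|_{L^{p}(\Omega)}$, which for $M$ large enough is again $\mathcal{O}(n^{-\kappa})$. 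Summing gives $\sup_{y}A_{n}(y)=\mathcal{O}(n^{-\kappa})$, and together with the estimate on $B_{n}$ this proves \eqref{pdf11}.

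The step I expect to be the main obstacle is precisely this uniformity of the $A_{n}$-estimate in the scale $n^{-\delta}$: one must use the Bessel-potential decomposition $\partial^{\gamma}g_{n}=(1-\Delta)^{\alpha/2}u_{n}^{\gamma}$ in exactly the form above, so that only $\|\phi_{n^{-\delta}}\|_{L^{1}}=1$ and the sharp threshold $\alpha>\beta+m/q+1$ enter and never the divergent quantity $\|\phi_{n^{-\delta}}\|_{L^{\infty}}$, and one must control the Taylor remainder and the interpolating functionals $H+t(H_{n}-H)$, $t\in[0,1]$, which need not be non-degenerate for fixed $n$. This last issue is handled either by taking the Taylor order $M$ large relative to $\delta/\kappa$, or by first regularizing $H_{n}$ with an independent Gaussian of variance $n^{-2\delta}$ and splitting the mollification accordingly before interpolating, as in \cite{HW96}; the remaining ingredients are standard Malliavin integration by parts and the Sobolev bookkeeping already encoded in hypotheses (i)--(ii).
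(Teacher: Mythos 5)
First, a remark on the comparison itself: the paper does not prove Proposition \ref{pdf} at all — it is quoted verbatim from \cite{HW96} (Hu--Watanabe), so the only "paper proof" is the citation, and your attempt has to be measured against the argument in that reference, which proceeds by representing the mollified quantity as a partial expectation of a genuine Donsker delta composed with the Gaussian-regularized functional $H_n+n^{-\delta}\xi$ on an extended Wiener space, not by a Taylor expansion of the mollified kernel. Within your own scheme, the decomposition into $A_n+B_n$, the Lipschitz estimate for $y\mapsto(1-\Delta)^{\beta/2}\delta_y\circ H$ giving $\sup_y B_n(y)=\mathcal{O}(n^{-\delta})$, and the first-order term of $A_n$ (where the Bessel-potential trick $\nabla g_n=(1-\Delta)^{\alpha/2}\bigl[(\nabla G_{\alpha-\beta})\ast\phi_{n^{-\delta}}\bigr]$ uses exactly the threshold $\alpha>\beta+m/q+1$ and only $\|\phi_{n^{-\delta}}\|_{L^1}=1$) are sound.

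The genuine gap is in the terms of order $|\gamma|\ge 2$ and the Taylor remainder. Under the hypothesis you only have $\alpha>\beta+m/q+1$, so at most one derivative can be unloaded onto the Bessel kernel; for $|\gamma|\ge2$ the remaining derivatives fall on $\phi_{n^{-\delta}}$ and cost $\|\partial^{\gamma''}\phi_{n^{-\delta}}\|_{L^1}\sim n^{\delta|\gamma''|}$, so the $|\gamma|$-th term is $\mathcal{O}\bigl(n^{\delta(|\gamma|-1)-\kappa|\gamma|}\bigr)$, not $\mathcal{O}(n^{-\kappa|\gamma|})$ as you claim ("higher-order terms carry the extra decay" presumes a uniform $L^p$ bound on $u_n^\gamma$ that would require $\alpha>\beta+m/q+|\gamma|$). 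Likewise the crude remainder bound gives $\|\nabla^{M+1}g_n\|_{L^\infty}\||H_n-H|^{M+1}\|_{L^p}\sim n^{(M+1)(\delta-\kappa)+\delta(\beta+m)}$. Both expressions fail to be $\mathcal{O}(n^{-\kappa\wedge\delta})$ as soon as $\delta\ge\kappa$, and in that regime increasing $M$ makes the remainder worse, so your first proposed fix ("take $M$ large relative to $\delta/\kappa$") does not work; yet $\delta\ge\kappa$ is exactly the regime the statement must cover (all $\delta>0$) and the one used in the paper via Remark \ref{pdf1}. Your second proposed fix — regularizing with an independent Gaussian of variance $n^{-2\delta}$ — is indeed the route of \cite{HW96}, but you do not carry it out, and it contains the real difficulty: the interpolants $H+t(H_n-H)$ need not be non-degenerate, and the added noise only yields $(\det\gamma)^{-1}\lesssim n^{2\delta m}$, so one must trade this polynomial blow-up against the smallness of $H_n-H$ in all Malliavin--Sobolev norms (the mechanism behind Lemma \ref{pdf0}) to recover a constant independent of $n$. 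As written, the proposal therefore establishes the claim only for $\delta$ small relative to $\kappa$ and leaves the essential case open.
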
 
\begin{rem}\label{pdf1}
If in addition $H_n$ in Proposition \ref{pdf} is uniformly non-degenerate, i.e.\\
$\sup_n\|(\det\gamma_{H_n})^{-1}\|_{L^p(\Omega)}<\infty,$
then we have
\begin{equation*}\label{pdf2}
\sup_{y\in\mathbb{R}^{m}}\left\|(1-\Delta)^{\beta/2}\delta_y\circ H_n-(1-\Delta)^{\beta/2}\delta_y\circ H\right\|_{-\alpha,p}=\mathcal{O}(n^{-\kappa}).
\end{equation*}
\end{rem}

\subsection{Main assumptions}
In this part, we introduce main assumptions on equation \eqref{SDE1}.
To ensure the existence and uniqueness of a strong solution of equation \eqref{SDE1} (see \cite[Subsection 3.1]{HJ14}), 
we assume that $F\in C^2$ is bounded below, and 
$\limsup_{r\to 0}\sup_{y\in \mathbb R^m}\frac {\|y\|^{r}}{C_0+F(y)}<\infty.$
Here, $F$ is called bounded below if  $F(y)+C_0 > 0$ holds for any $y\in \mathbb R^m$ and some constant $C_0$.
For the purpose of getting the solvability of scheme \eqref{split sol}, we further impose the assumption that $\nabla^2 F$ is bounded below uniformly in the sense that there exists a constant $K\ge0$ such that for any $y\in \mathbb R^m$, $\lambda_{min}\left(\nabla^2 F(y)\right)\ge-K$. We remark that it is, for example, satisfied in the case that $F$ is convex. All the above assumptions are supposed to be fulfilled throughout this article. For convenience, further assumptions on the drift coefficient $F$ and the diffusion coefficient $\sigma$ that may be used in the ensuing sections are given as follows.

\begin{hyp}\label{F2}
Assume that $F\in C_p^\infty$ and there exist some constants $C_i>0,\,i=1,2,3,\,\epsilon>0$ and $\vec{l}=(l_1,\ldots,l_m)$ with integers $l_i\ge1,\,i=1,\ldots,m$, such that for any $y\in\mathbb{R}^m$, the following inequalities hold:
\begin{align}\label{FQ1}
&-C_3+C_1\lfloor y\rfloor^{2\vec{l}}\le F(y)\le C_2\lfloor y\rfloor^{2\vec{l}}+C_3,\\\label{FQ2}
&\|\nabla^2 F(y)\|\le C_2\lfloor y\rfloor^{2{\vec{l}-\epsilon\mathbbm{1}}}+C_3.
\end{align}
\end{hyp}
For simplicity, we suppose that for any multi-index $\alpha$ with $|\alpha|:=\sum_{i=1}^m\alpha_i\ge1$, it holds that 
$\|\partial^\alpha F(y)\|\le C\left(1+\|y\|^{2|\,\vec{l}\,|_{\infty}}\right)$.
The Assumption \eqref{F2} is needed to deduce the optimal strong convergence rate of the splitting AVF scheme \eqref{split sol} in Section \ref{S5}. If $F$ is a polynomial satisfying \eqref{FQ1}, then $F$ satisfies \eqref{FQ2} as well. 
It can be seen that when $|\,\vec{l}\,|_{\infty}>1$, equation \eqref{SDE1} under Assumption \ref{F2} satisfies neither globally Lipschitz condition nor globally monotone condition.
Two examples satisfying Assumption \ref{F2} are given as follows:
\begin{align*}
&(1)\,m=1,\,F(y)=\sum_{i=0}^{2\kappa}a_iy^{i},\,a_{2\kappa}>0,\,\kappa\ge1,\\
&(2)\,m=2,\,F(y)=y_1^4+y_2^6+y_1y_2+\sin y_1.
\end{align*}
For convenience, we don't consider the case that $l_i=0$ for some $i=1,\dots,m$, since all the arguments in Sections \ref{S3}-\ref{S6} still hold with a slight modification.  

\begin{hyp}\label{F4}
There are at least m vectors of $\{\sigma_1,\ldots,\sigma_d\}$ linearly independent.
\end{hyp}
It is easily verified that the noise in equation \eqref{SDE1} is degenerate and that Assumption \ref{F4} implies H\"{o}rmander's condition (see e.g \rm{\cite{HSW17}}), which indicates that the law of the exact solution $X(t)$ of equation \eqref{SDE1} is absolutely continuously with respect to the Lebesgue measure on $\mathbb{R}^{2m}$, for any $t\in(0,T]$.

\section{Stochastic Langevin equation}\label{S3}
In this section, we give the exponential integrability property and the  existence and smoothness of the density function of the exact solution for equation \eqref{SDE1}. For convenience, we rewrite \eqref{SDE1} as 
\begin{equation}\label{SDE2}
dX(t)=A_0(X(t))dt+\sum_{k=1}^d \left[\begin{array}{c}\sigma_k\\0\end{array}\right]\circ\,\ud W_{t}^{k},
\end{equation}
with
\begin{equation*}
A_0(x)=\left[\begin{array}{c}-\nabla F(Q)-vP\\P\end{array}\right],\,x=(P^\top,\,Q^\top)^\top.
\end{equation*}
\subsection{Exponential integrability property of the exact solution}
\label{EI}
Let $U(x)=K_0\left(\frac{\|P\|^2}{2}+F(Q)+C_0\right),\,x=(P^\top,\,Q^\top)^\top, \,K_0\ge1$. Then $U$ is a nonnegative functional. By applying It\^o's formula to $U(X(t))$ and a standard argument, we  show the following a priori estimate, where $X(t)=(P(t)^\top,Q(t)^\top)^\top$.
\begin{lem}\label{MB}
Let $p\ge1$, then there exists $C=C(T,\sigma,X(0),p)>0$ such that
\begin{equation}\label{ME}
\mathbb{E}\left[\sup_{0\le t\le T}\|X(t)\|^p\right]\le C.
\end{equation}

\end{lem}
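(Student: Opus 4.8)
The plan is to exploit the Hamiltonian (energy) structure of \eqref{SDE1} through the Lyapunov functional $U$, so that the only superlinearly growing term, $\nabla F(Q)$, never enters the drift of $U(X(t))$ with an unfavourable sign; after that the estimate becomes a routine moment computation. First I would apply It\^{o}'s formula to $U(X(t))=K_0\bigl(\tfrac12\|P(t)\|^2+F(Q(t))+C_0\bigr)$. Since the $Q$-component of $X$ has finite variation and $\nabla^2_{PP}U=K_0 I$, the only second-order contribution is $\tfrac{K_0}{2}\mathrm{tr}(\sigma\sigma^{\top})\,\ud t$; moreover the first-order term $K_0\langle P,-\nabla F(Q)\rangle\,\ud t$ coming from $\ud P$ and the term $K_0\langle\nabla F(Q),P\rangle\,\ud t$ coming from $\ud Q$ cancel exactly. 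This gives
\begin{equation*}
U(X(t))=U(X(0))-K_0 v\int_0^t\|P(s)\|^2\,\ud s+\frac{K_0}{2}\mathrm{tr}(\sigma\sigma^{\top})\,t+K_0\sum_{k=1}^d\int_0^t\langle P(s),\sigma_k\rangle\,\ud W^k_s,
\end{equation*}
so the drift of $U(X(t))$ is bounded above by the constant $\tfrac{K_0}{2}\mathrm{tr}(\sigma\sigma^{\top})$, while the bracket of the (local) martingale part $M_t$ satisfies $\ud\langle M\rangle_t=K_0^2\sum_k\langle P(t),\sigma_k\rangle^2\,\ud t\le K_0^2\,\mathrm{tr}(\sigma\sigma^{\top})\,\|P(t)\|^2\,\ud t$.

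Next I would propagate this to all polynomial moments. Fix $p\ge1$, introduce the stopping times $\tau_R:=\inf\{t\ge0:\|X(t)\|\ge R\}$, and apply It\^{o}'s formula to $\Phi(t):=(1+U(X(t\wedge\tau_R)))^{p}$. Using the elementary inequality $\|P\|^2\le\tfrac{2}{K_0}U\le\tfrac{2}{K_0}(1+U)$, both the It\^{o} correction $\tfrac{p(p-1)}{2}(1+U)^{p-2}\,\ud\langle M\rangle$ and the drift coming from $\ud U$ are dominated by $C_p\,(1+U)^{p}\,\ud t$ (the good term $-K_0v\|P\|^2$ may simply be discarded here, so $v>0$ is not even needed for this step). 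Taking expectations removes the resulting true martingale on $[0,\tau_R]$ and Gr\"{o}nwall's lemma yields $\sup_{t\le T}\mathbb{E}\bigl[(1+U(X(t\wedge\tau_R)))^{p}\bigr]\le(1+U(X(0)))^{p}e^{C_pT}$ uniformly in $R$; letting $R\to\infty$ and using Fatou's lemma gives $\sup_{t\le T}\mathbb{E}[(1+U(X(t)))^{p}]\le C_p$ for every $p\ge1$.

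To put the supremum inside the expectation, I would take $\sup_{t\le T}$ in the It\^{o} expansion of $\Phi$, apply the Burkholder--Davis--Gundy inequality to the martingale term, and bound its bracket over $[0,T]$ by $C\,\sup_{s\le T}(1+U(X(s\wedge\tau_R)))^{p}\int_0^T(1+U(X(s\wedge\tau_R)))^{p-1}\,\ud s$. A Young inequality then lets me absorb a small multiple of $\mathbb{E}[\sup_{s\le T}\Phi(s)]$ into the left-hand side --- legitimate because $\Phi$ is bounded on $[0,\tau_R]$ --- while the remaining term $\int_0^T\mathbb{E}[(1+U(X(s\wedge\tau_R)))^{p-1}]\,\ud s$ is controlled by the previous paragraph. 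Another passage $R\to\infty$ with Fatou's lemma produces $\mathbb{E}[\sup_{t\le T}(1+U(X(t)))^{p}]\le C_p$ for all $p\ge1$. Finally, to return to $\|X\|$, note $\|P(t)\|^2\le\tfrac{2}{K_0}U(X(t))$, while the standing assumption $\limsup_{r\to0}\sup_{y}\|y\|^{r}/(C_0+F(y))<\infty$ furnishes an $r_0>0$ and a constant with $\|y\|^{r_0}\le C(C_0+F(y))$ for all $y$, hence $\|Q(t)\|^{r_0}\le C(C_0+F(Q(t)))\le \tfrac{C}{K_0}U(X(t))$. Thus $\|X(t)\|^{p}\le C\,(1+U(X(t)))^{q}$ for some $q=q(p,r_0)>0$, and applying the bound just obtained with exponent $q$ gives \eqref{ME}.

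The main obstacle is conceptual rather than computational: the naive route --- Gr\"{o}nwall applied to $\mathbb{E}\|X(t)\|^{2}$ --- breaks down because $\nabla F$ is merely locally Lipschitz and not globally monotone, and one has to recognise that the right object is the energy functional $U$, for which the dangerous term cancels thanks to the Hamiltonian structure of \eqref{SDE1}; once $U$ is singled out, the rest (It\^{o} on powers of $1+U$, Gr\"{o}nwall, BDG with absorption, and the two localisation arguments) is standard.
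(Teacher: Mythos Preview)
Your proposal is correct and follows exactly the approach the paper indicates: the paper's own proof is merely the one-line remark ``By applying It\^{o}'s formula to $U(X(t))$ and a standard argument,'' and you have faithfully spelled out that standard argument (cancellation of the $\nabla F$ terms via the Hamiltonian structure, It\^{o} on powers of $1+U$, Gr\"{o}nwall, BDG with absorption, localisation, and recovery of $\|X\|$ from $U$ via the standing growth hypothesis on $F$).
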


Beyond the above a priori estimate of $X(t)$, the exponential integrability property is also shown, which plays a key role in the study of strong convergence rate (see e.g. \cite{CHZ19,HJ14}). Let us recall the following exponential integrability lemma (see \cite[Proposition 3.1]{CHZ16} or \cite[Corollary 2.4]{HSA}). For more applications of exponential integrability property, see the references \cite{BCH18,CH17,CHS18,HJW18} and the references therein. 
\begin{lem}\label{exp}
Let $H$ be a separable Hilbert space, $U\in\mathcal{C}^2(H;\mathbb{R})$, $\bar{U}\in L^0([0,T]\times H;\mathbb R)$, $X$ be an $H$-valued, adapted stochastic process with continuous sample paths satisfying $\int_0^T\|\mu(X_s)\|+\|\sigma(X_s)\|^2\,\ud s<\infty$ a.s., and for all $t\in[0,T]$, $X_t=X_0+\int_0^t\mu(X_s)\,\ud s+\int_0^t\sigma(X_s)\,\ud W_s$ a.s. Assume that there exists an $\mathbb{R}$-valued $\mathscr{F}_0$-measurable random variable $\beta$ such that a.s.
\begin{equation}\label{exp1}
DU(X)\mu(X)+\frac{tr[D^2U(X)\sigma(X)\sigma^*(X)]}{2}+\frac{\|\sigma^*(X)DU(X)\|^2}{2e^{\beta t}}+\bar{U}(X)\le\beta U(X),
\end{equation}
then 
\begin{equation*}
\sup_{t\in[0,T]}\mathbb{E}\left[\exp\left(\frac{U(X_t)}{e^{\beta t}}+\int_o^t\frac{\bar{U}(X_r)}{e^{\beta r}}\,\ud r\right)\right]\le\mathbb{E}\left[e^{U(X_0)}\right].
\end{equation*}
\end{lem}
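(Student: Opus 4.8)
The plan is to show that the nonnegative process
\[
Y_t := \exp\!\left(\frac{U(X_t)}{e^{\beta t}} + \int_0^t \frac{\bar U(X_r)}{e^{\beta r}}\,\ud r\right), \qquad t\in[0,T],
\]
is an $\{\mathscr{F}_t\}$-supermartingale; since $Y_0 = e^{U(X_0)}$, this immediately gives $\sup_{t\in[0,T]}\mathbb{E}[Y_t]\le\mathbb{E}[e^{U(X_0)}]$, which is the assertion. The scheme follows the pattern of \cite[Proposition 3.1]{CHZ16}.

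First I would check that $Y$ is well defined, i.e.\ that $\int_0^t e^{-\beta r}\bar U(X_r)^+\,\ud r<\infty$ a.s. Discarding the nonnegative summand $\tfrac12 e^{-\beta t}\|\sigma^*(X)DU(X)\|^2$ in \eqref{exp1} yields the pointwise bound
\[
\bar U(X_r)\le \beta U(X_r) - DU(X_r)\mu(X_r) - \tfrac12\operatorname{tr}\!\big[D^2U(X_r)\sigma(X_r)\sigma^*(X_r)\big].
\]
Since $X$ has continuous sample paths, for $\mathbb{P}$-a.e.\ $\omega$ the trajectory $\{X_r(\omega):0\le r\le T\}$ is a compact subset of $H$, so $U$, $DU$ and $D^2U$ are bounded along it; together with $\int_0^T(\|\mu(X_s)\|+\|\sigma(X_s)\|^2)\,\ud s<\infty$ a.s.\ this makes the right-hand side integrable on $[0,T]$. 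Hence $\int_0^t e^{-\beta r}\bar U(X_r)\,\ud r$ is a well-defined $[-\infty,\infty)$-valued process and $Y_t\in[0,\infty)$.

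Next I would compute the dynamics of $Y$ by two applications of It\^o's formula. Writing $Z_t := e^{-\beta t}U(X_t) + \int_0^t e^{-\beta r}\bar U(X_r)\,\ud r$, so that $Y_t=e^{Z_t}$, and treating the time-independent $\mathscr{F}_0$-measurable $\beta$ as a constant, It\^o's formula for $U\in\mathcal{C}^2(H;\mathbb{R})$ along the equation for $X$ shows that the drift of $Z$ equals $e^{-\beta t}\big(-\beta U + DU\,\mu + \tfrac12\operatorname{tr}[D^2U\,\sigma\sigma^*] + \bar U\big)(X_t)$ and that $\ud\langle Z\rangle_t = e^{-2\beta t}\|\sigma^*(X_t)DU(X_t)\|^2\,\ud t$. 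Applying It\^o's formula to $e^{Z_t}$ then gives
\[
\ud Y_t = Y_t\,e^{-\beta t}\,\mathcal D_t\,\ud t + Y_t\,e^{-\beta t}\,\langle\sigma^*(X_t)DU(X_t),\ud W_t\rangle,
\]
where
\[
\mathcal D_t:=\Big(-\beta U + DU\,\mu + \tfrac12\operatorname{tr}[D^2U\,\sigma\sigma^*] + \bar U\Big)(X_t) + \frac{\|\sigma^*(X_t)DU(X_t)\|^2}{2e^{\beta t}},
\]
and hypothesis \eqref{exp1} is precisely the statement that $\mathcal D_t\le0$ a.s.\ for every $t$. Thus $Y$ is a nonnegative local supermartingale.

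The last step — upgrading this to a genuine supermartingale bound — is the one I expect to require the most care, since one cannot take expectations directly in the It\^o formula: $Y$ need not be integrable a priori. The remedy is a standard localization: pick stopping times $\tau_j\uparrow T$ for which $Y_{\cdot\wedge\tau_j}$ is bounded and the stochastic integral stopped at $\tau_j$ is a true martingale (localizing, if necessary, under the regular conditional law given $\mathscr{F}_0$ so that $\beta$ and $X_0$ become constants). Taking expectations in the stopped It\^o formula and using $\mathcal D_t\le0$ gives $\mathbb{E}[Y_{t\wedge\tau_j}]\le\mathbb{E}[Y_0]=\mathbb{E}[e^{U(X_0)}]$ for all $j$ and $t$; since $Y_{t\wedge\tau_j}\to Y_t$ a.s.\ as $j\to\infty$ and $Y\ge0$, Fatou's lemma yields $\mathbb{E}[Y_t]\le\mathbb{E}[e^{U(X_0)}]$, and taking the supremum over $t\in[0,T]$ completes the proof. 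Nonnegativity of $Y$ is the structural feature that makes this localization-plus-Fatou step close; the remaining bookkeeping in the first two steps — finiteness of the $\bar U$-integral and validity of It\^o's formula on the possibly infinite-dimensional space $H$ — is routine and is exactly where path-continuity and the integrability hypotheses on $\mu$ and $\sigma$ are used.
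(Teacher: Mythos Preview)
Your argument is correct and is precisely the standard one: the paper does not supply its own proof of this lemma but quotes it from \cite[Proposition~3.1]{CHZ16} (see also \cite[Corollary~2.4]{HSA}), whose proof proceeds exactly as you outline --- It\^o's formula applied to $e^{-\beta t}U(X_t)$ and then to the exponential, identification of the drift with the left-hand side of \eqref{exp1} minus $\beta U$, and localization plus Fatou to pass from local to genuine supermartingale.
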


 Based on Lemma \ref{exp}, the authors of \cite{HJ14} prove the exponential integrability of the exact solution of equation \eqref{SDE1} when $\sigma=\sqrt{\epsilon}I$, see the formula (4.28) in \cite[Section 4.5]{HJ14}. Here, $I$ denotes the identity matrix. We now present the exponential integrability property of the exact solution of equation \eqref{SDE1}.
\begin{pro}\label{EEI}
For any $\beta\ge K_0\left(\sum\limits_{k=1}^d \|\sigma_k\|^2-2v\right),$ there holds that
\begin{equation}\label{EqEI}
\sup_{t\in[0,T]}\mathbb{E}\left[\exp\left(\frac{U(X(t))}{e^{\beta t}}\right)\right]\le C(\beta,T)e^{U(X(0))} .
\end{equation}
\end{pro}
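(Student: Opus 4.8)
The plan is to apply the abstract exponential integrability lemma (Lemma \ref{exp}) directly to the exact solution $X(t)$ of equation \eqref{SDE1}, written in the It\^o form \eqref{SDE2}, with the Lyapunov functional $U(x)=K_0\left(\frac{\|P\|^2}{2}+F(Q)+C_0\right)$ and with $\bar U\equiv 0$. First I would identify the ingredients needed in \eqref{exp1}: the drift is $\mu(x)=A_0(x)=(-\nabla F(Q)-vP,\,P)^\top$, the diffusion matrix is $\sigma(x)=\mathrm{diag}(\sigma_1,\ldots,\sigma_d)$ acting only on the $P$-component (constant, hence all derivatives in the It\^o correction are the genuine ones), and I would compute $DU(x)=K_0(P^\top,\nabla F(Q)^\top)$ and $D^2U(x)=K_0\,\mathrm{diag}(I,\nabla^2F(Q))$. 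Then $DU(X)\mu(X)=K_0\big(P^\top(-\nabla F(Q)-vP)+\nabla F(Q)^\top P\big)=-K_0 v\|P\|^2$, where the $\nabla F(Q)$ terms cancel exactly — this cancellation is precisely the Hamiltonian structure that makes the Langevin equation tractable. The trace term is $\frac12\mathrm{tr}[D^2U(X)\sigma\sigma^*]=\frac{K_0}{2}\sum_{k=1}^d\|\sigma_k\|^2$, a constant, and $\|\sigma^*DU(X)\|^2=K_0^2\sum_{k=1}^d\langle\sigma_k,P\rangle^2\le K_0^2\|P\|^2\sum_{k=1}^d\|\sigma_k\|^2$.

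Next I would verify the one-sided bound \eqref{exp1}. Collecting the terms, the left-hand side is bounded above by
\begin{equation*}
-K_0 v\|P\|^2+\frac{K_0}{2}\sum_{k=1}^d\|\sigma_k\|^2+\frac{K_0^2\|P\|^2\sum_{k=1}^d\|\sigma_k\|^2}{2e^{\beta t}}.
\end{equation*}
Since $e^{\beta t}\ge 1$ for $\beta\ge0$ (and the case $\beta<0$, i.e. $\sum_k\|\sigma_k\|^2<2v$ with $K_0$ close to $1$, would be handled by bounding $e^{\beta t}\ge e^{\beta T}$ or simply taking a larger admissible $\beta\ge0$), the last term is at most $\frac{K_0^2\|P\|^2}{2}\sum_{k=1}^d\|\sigma_k\|^2$, so the $\|P\|^2$-coefficient is at most $K_0\big(\frac{K_0}{2}\sum_k\|\sigma_k\|^2-v\big)$. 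On the other hand $\beta U(X)=\beta K_0\big(\frac{\|P\|^2}{2}+F(Q)+C_0\big)\ge \frac{\beta K_0}{2}\|P\|^2$ because $F(Q)+C_0\ge0$ and $\beta\ge0$. Hence \eqref{exp1} holds provided $\frac{\beta}{2}\ge \frac{K_0}{2}\sum_k\|\sigma_k\|^2-v$ plus absorbing the constant $\frac{K_0}{2}\sum_k\|\sigma_k\|^2$ into $\beta(F(Q)+C_0)\ge \beta C_1>0$ — here one uses that $F$ is bounded below by a positive constant after the shift, so for $\beta$ large enough the constant term on the right dominates the constant on the left. This yields exactly the stated threshold $\beta\ge K_0\big(\sum_k\|\sigma_k\|^2-2v\big)$ after the (routine) bookkeeping on the constants, possibly at the cost of enlarging the generic constant $C(\beta,T)$.

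Finally, Lemma \ref{exp} gives $\sup_{t\in[0,T]}\mathbb{E}\big[\exp\big(U(X(t))/e^{\beta t}\big)\big]\le \mathbb{E}\big[e^{U(X(0))}\big]=e^{U(X(0))}$ since $X(0)$ is deterministic; the harmless factor $C(\beta,T)$ in \eqref{EqEI} absorbs the effect of the residual constant terms (equivalently, one applies the lemma with $\bar U$ equal to a negative constant). The integrability hypotheses of Lemma \ref{exp} — that $X$ has continuous sample paths and $\int_0^T(\|\mu(X_s)\|+\|\sigma(X_s)\|^2)\,\ud s<\infty$ a.s. — follow from the well-posedness of \eqref{SDE1} recalled earlier and from Lemma \ref{MB} together with the polynomial growth of $\nabla F$. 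The main obstacle, though entirely elementary, is the careful matching of constants: one must check that the quadratic-in-$\|P\|$ terms are controlled by the $\frac{\beta K_0}{2}\|P\|^2$ part of $\beta U(X)$ while the $Q$-independent constant $\frac{K_0}{2}\sum_k\|\sigma_k\|^2$ is absorbed by $\beta K_0(F(Q)+C_0)$, which forces the precise lower bound on $\beta$; the sign structure from the Hamiltonian cancellation of the $\nabla F$ terms is what makes this possible at all.
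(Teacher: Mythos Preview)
Your approach is essentially the same as the paper's: both apply Lemma~\ref{exp} with $H=\mathbb{R}^{2m}$, $\mu=A_0$, the constant diffusion matrix, and the same Lyapunov functional $U$, and both rely on the Hamiltonian cancellation $DU(X)\mu(X)=-K_0 v\|P\|^2$. The only difference is in the handling of the constant term $\frac{K_0}{2}\sum_k\|\sigma_k\|^2$: the paper takes $\bar U\equiv -\frac{K_0}{2}\sum_k\|\sigma_k\|^2$ from the start, so this term is cancelled \emph{before} comparing with $\beta U(X)$, and the factor $C(\beta,T)=\exp\big(\int_0^T\frac{K_0}{2}\sum_k\|\sigma_k\|^2 e^{-\beta r}\,\ud r\big)$ emerges directly from the conclusion of Lemma~\ref{exp}. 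You arrive at the same device, but only parenthetically (``equivalently, one applies the lemma with $\bar U$ equal to a negative constant''); your primary argument with $\bar U\equiv 0$ and absorption of the constant into $\beta K_0(F(Q)+C_0)$ does not yield the stated threshold on $\beta$ without further enlarging it, since $F(Q)+C_0$ need not be uniformly bounded away from zero. So the clean route is exactly the one you mention at the end, and that is what the paper does.
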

\begin{proof}
Take $H=\mathbb{R}^{2m}$, $\mu(x)=\left[\begin{array}{c}-\nabla F(Q)-vP\\P\end{array}\right],\sigma(x)=\left[\begin{array}{ccc}\sigma_1&\ldots&\sigma_d \\0&\ldots&0\end{array}\right],\, \bar{U}\equiv-\frac{K_0}{2}\sum\limits_{k=1}^d \|\sigma_k\|^2$ in Lemma \ref{exp}. Then a straightforward calculation, similar to the formula (4.27) in \cite[Section 4.5]{HJ14}, shows that
 \eqref{exp1} holds for any $\beta\ge K_0\left(\sum\limits_{k=1}^d \|\sigma_k\|^2-2v\right)$, and thereby \eqref{EqEI} follows from Lemma \ref{exp}.
\end{proof}

\subsection{Probability density function of the exact solution}
In this part, we show that the exact solution $X(t)$ of equation \eqref{SDE1} admits a smooth density function under Assumptions \ref{F2}-\ref{F4}, for any $t\in(0,T]$. 
By using Malliavin calculus and  the exponential integrability property,
we obtain the following result on the smoothness of the density function of $X(t)$, for any $t\in(0,T]$. 
\begin{lem}\label{NS}
Let Assumptions \ref{F2}-\ref{F4} hold. Then for any fixed $t\in(0,T]$, $X(t)$ admits an infinitely differentiable density function.
\end{lem}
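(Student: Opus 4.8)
The plan is to verify the classical criterion for smoothness of the density of a Wiener functional (as in \cite[Theorem 2.1.4]{DN06} or \cite{IW84}): it suffices to show that $X(t)\in\mathbb{D}^\infty(\mathbb{R}^{2m})$ and that $X(t)$ is non-degenerate, i.e.\ the Malliavin covariance matrix $\gamma_{X(t)}$ is a.s.\ invertible with $(\det\gamma_{X(t)})^{-1}\in L^{\infty-}(\Omega)$. First I would establish the Malliavin differentiability of all orders together with the required moment bounds. Since the coefficients $A_0$ and $\sigma$ in \eqref{SDE2} are smooth with polynomially bounded derivatives (Assumption \ref{F2}), the first variation process $J_t:=\partial X(t)/\partial X(0)$ and the Malliavin derivatives $D_s X(t)$ solve linear SDEs driven by $\nabla A_0(X(\cdot))$; the only subtlety compared with the globally Lipschitz case is that these linear equations have superlinearly growing (random) coefficients, so naive Gr\"onwall arguments fail. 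Here is where the exponential integrability property of Proposition \ref{EEI} enters: combining \eqref{EqEI} with the a priori moment bound of Lemma \ref{MB}, one controls $\mathbb{E}\big[\exp(c\int_0^t\|\nabla^2 F(Q(s))\|\,\ud s)\big]$ for suitable $c>0$, which by \eqref{FQ2} and \eqref{FQ1} is dominated by an exponential of $U(X(s))$; this yields finite moments of all orders for $\sup_{s\le t}\|J_s\|$, its inverse, and $\|D_s X(t)\|$, and inductively for all higher Malliavin derivatives. Hence $X(t)\in\mathbb{D}^\infty(\mathbb{R}^{2m})$.

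The second and main step is the non-degeneracy of $\gamma_{X(t)}$. Using the representation $\gamma_{X(t)}=J_t\,C_t\,J_t^\top$ with the reduced covariance $C_t=\int_0^t J_s^{-1}\,\widetilde\sigma\,\widetilde\sigma^\top (J_s^{-1})^\top\,\ud s$ and $\widetilde\sigma=[\sigma^\top\ 0]^\top$, and since $J_t,J_t^{-1}$ have moments of all orders by the previous step, it suffices to bound negative moments of $\det C_t$, equivalently to lower-bound $\inf_{\|\xi\|=1}\xi^\top C_t\xi$. The noise is degenerate — it enters only the $P$-component — so ellipticity is false and one must use the hypoelliptic (H\"ormander-type) structure. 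I would exploit the special algebraic form of \eqref{SDE1}: the drift vector field $A_0$ has a nontrivial $Q$-block ($\dot Q=P$), so brackets of the constant noise directions $[\sigma_k^\top\ 0]^\top$ with $A_0$ generate directions in the $Q$-component. Concretely, Assumption \ref{F4} gives $m$ linearly independent vectors among $\{\sigma_k\}$ spanning the $P$-space, and one Lie bracket with $A_0$ (differentiating the flow of the $Q$-equation) propagates these to span the $Q$-space; together they span all of $\mathbb{R}^{2m}$, which is exactly the parabolic H\"ormander condition asserted in the paragraph following Assumption \ref{F4} (citing \cite{HSW17}). Quantitatively, one runs the standard Norris-lemma / small-time expansion argument: for the quadratic form $\xi^\top C_t\xi$ one shows that if it is small then certain iterated It\^o/Stratonovich integrals of $\langle \xi, J_s^{-1}\,[\![A_0\text{-bracketed fields}]\!]\rangle$ are all small, and a Norris-type lemma forces $\xi=0$; the polynomial growth of the bracket coefficients is again absorbed by the exponential integrability estimate, so the constants in the Norris inequality have the required moments. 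This produces $\mathbb{P}(\inf_{\|\xi\|=1}\xi^\top C_t\xi<\varepsilon)\le C_N\varepsilon^N$ for every $N$, hence $(\det C_t)^{-1}\in L^{\infty-}(\Omega)$ and therefore $(\det\gamma_{X(t)})^{-1}\in L^{\infty-}(\Omega)$.

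Having both ingredients, $X(t)\in\mathbb{D}^\infty(\mathbb{R}^{2m})$ and $(\det\gamma_{X(t)})^{-1}\in L^{\infty-}(\Omega)$, the cited criterion immediately gives that the law of $X(t)$ has an infinitely differentiable density for every fixed $t\in(0,T]$, which is the assertion of Lemma \ref{NS}. The step I expect to be the main obstacle is controlling the negative moments of the Malliavin covariance matrix in this hypoelliptic, superlinearly growing setting: the usual H\"ormander machinery is stated for coefficients with bounded derivatives, so one must carefully re-derive the Norris-lemma estimates while tracking the polynomial weights and discharging them through Proposition \ref{EEI} and Lemma \ref{MB}. (In the actual write-up it may be cleaner to localize on the event $\{\sup_{s\le t}\|X(s)\|\le R\}$, apply the classical bounded-coefficient theory there, and then use the exponential integrability to send $R\to\infty$ with the tail cost controlled.)
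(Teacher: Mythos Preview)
Your proposal is correct and, for the $\mathbb{D}^\infty$-regularity part, matches the paper's argument: induction on the order $\alpha$ of the Malliavin derivative, with Gronwall bounds for the linear variation equations, the superlinear coefficient $\|\nabla^2 F(Q(s))\|$ being controlled through the exponential integrability of Proposition~\ref{EEI} (the paper's key intermediate estimate is precisely $\mathbb{E}\big[\exp\big(\int_0^T C\lfloor Q(t)\rfloor^{2\vec{l}-\epsilon\mathbbm{1}}\,\ud t\big)\big]<\infty$).

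Where you diverge is in the non-degeneracy step. The paper does \emph{not} carry out the Norris-lemma/H\"ormander machinery; it simply invokes \cite[Theorem 2.3.3]{DN06} as a black box, so that once H\"ormander's condition (Assumption~\ref{F4}) is noted, only $X(t)\in\mathbb{D}^{\alpha,\infty}$ remains to be checked. Your plan --- either redoing the Norris argument while tracking polynomial weights and discharging them via Proposition~\ref{EEI}, or localizing on $\{\sup_{s\le t}\|X(s)\|\le R\}$ and using exponential integrability to control the tail --- is more careful, since the cited theorem is formulated for coefficients with bounded derivatives. The paper's route buys brevity; yours closes the gap between the stated hypotheses of the quoted H\"ormander theorem and the superlinear growth present here.
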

\begin{proof}
Fix $t\in(0,T]$.
According to \cite[Theorem 2.1.4]{DN06} and \cite[Theorem 2.3.3]{DN06}, 
it remains to  prove that for any integer $\alpha\ge1$, $X(t)\in\mathbb{D}^{\alpha,\infty}(\mathbb R^{2m}).$ Denote $j(K):=j_{\epsilon_1},\ldots,j_{\epsilon_\eta}, r(K):=r_{\epsilon_1},\ldots,r_{\epsilon_\eta}$ with $j_{\epsilon_i}\in\{1,\ldots,d\}$ and $r_{\epsilon_i}\in[0,T],\,i\in\{1,\ldots,\eta\}$ for any subset $K=\{\epsilon_1,\cdots,\epsilon_\eta\}$ of $\{1,\ldots,\alpha\}$ with $\epsilon_1<\cdots<\epsilon_\eta$. Then by the chain rule, for $t\ge r_1 \vee\cdots\vee r_\alpha,\, i=1,\ldots,2m$,
the $\alpha$-th Malliavin derivative of $X^i(t)$ satisfies: 
\begin{align}\label{DXM1}
&D_{r_1,\ldots,r_\alpha}^{j_1,\ldots,j_\alpha}\left(X^i(t)\right)\\\nonumber
&=\int_{r_1 \vee\cdots\vee r_\alpha}^t \sum_{1\le\nu\le \alpha}\left(\partial_{k_1} \cdots\partial_{k_\nu}A_0^i\right)\left(X(s)\right)\times D_{r(I_1)}^{j(I_1)}\left[X^{k_1}(s)\right] \cdots D_{r(I_\nu)}^{j(I_\nu)}\left[X^{k_\nu}(s)\right]\,\ud s,
\end{align}
where $\sum\limits_{1\le\nu\le \alpha}$ denotes the sum over all sets of partitions $\{1,\ldots,\alpha\}=I_1\cup\cdots \cup I_{\nu},\, k_l\in\{1,\ldots,2m\},\, l=1,\ldots,\nu,$ and $\nu=1,\ldots,\alpha$, and for $t<r_1 \vee\cdots\vee r_\alpha,\, i=1,\ldots,2m,$
\begin{equation*}\label{DXM2}
D_{r_1,\ldots,r_\alpha}^{j_1,\ldots,j_\alpha}(X^i(t))=0.
\end{equation*}

Now we aim to show that for $p\ge1,\,\alpha\ge1$,
\begin{equation}\label{DK}
\sup_{r_1,\ldots,r_\alpha\in[0,T]}\mathbb E\left(\sup_{r_1 \vee\cdots\vee r_\alpha\le t\le T}\|D_{r_1,\ldots,r_\alpha}^{j_1,\ldots,j_\alpha}(X(t))\|^p\right)\le C(\alpha,p).
\end{equation}
for all choices of $j_1,\ldots,j_\alpha\in\{1,\ldots,d\}$. We prove it by an induction argument on the order $\alpha$ of the Malliavin derivative of $X(t)$.

For $\alpha=1$, the Malliavin derivative of $X(t)$ satisfies the following integral equation
\begin {align*}
D_rX(t)\textbf{1}_{\{r\le t\}}=\int_r^t (\nabla A_0)(X(s))D_rX(s)\,\ud s+\sum_{k=1}^d \left[\begin{array}{c}\sigma_k\\0\end{array}\right]\circ\,\ud W_{t}^{k}\textbf{1}_{\{r\le t\}},
\end{align*}
with $\textbf{1}_{\{r\le t\}}$ denoting the indicator function of the set $\{r\le t\}$ and
\begin{equation}\label{A0}
(\nabla A_0)(X(s))=\left[\begin{array}{cc} -vI&-\nabla^2F(Q(s))\\I&0\end{array}\right]. 
\end{equation}
By the triangle inequality and Gronwall's inequality, for any fixed $r\le t$, 
\begin{align*}
\|D_rX(t)\|&\le\|A\|\exp\left(\int_r^t \|(\nabla A_0)(X(s))\|\,\ud s\right).
\end{align*}
Due to the fact that 
\begin{align*}
\sum\limits_{i=1}^{m}\left|x_i\right|^{2l_i-\epsilon}=\sum_{i=1}^m \left|x_i\right|^{2l_i\cdot\frac{l_i-\epsilon/2}{l_i}}\le\sum_{i=1}^m \left|x_i\right|^{2l_i\cdot\frac{|\,\vec{l}\,|_{\infty}-\epsilon/2}{|\,\vec{l}\,|_{\infty}}}+C\le C(m)\left(\sum_{i=0}^m \left|x_i\right|^{2l_i}\right)^{\frac{|\,\vec{l}\,|_{\infty}-\epsilon/2}{|\,\vec{l}\,|_{\infty}}}+C,
\end{align*}
where  $x=(x_1,...,x_m)\in\mathbb{R}^m$, 
and Assumption \ref{F2}, we have
\begin{equation*}\label{Q2l}
\lfloor Q(t)\rfloor^{2\vec{l}-\epsilon\mathbbm{1}}\le C\left(\lfloor Q(t)\rfloor^{2\vec{l}}\right)^{\frac{|\,\vec{l}\,|_{\infty}-\epsilon/2}{|\,\vec{l}\,|_{\infty}}}+C\le C(U(X(t)))^{\frac{|\,\vec{l}\,|_{\infty}-\epsilon/2}{|\,\vec{l}\,|_{\infty}}}+C.
\end{equation*}
From \eqref{EqEI}, the H\"{o}lder, Young and Jensen inequalities, it follows that for   $\beta\ge K_0\sum\limits_{k=1}^d \|\sigma_k\|^2$,

\begin{align}\label{EEB}
\mathbb{E}\left[\exp\left(\int_0^T C\lfloor Q(t)\rfloor^{2{\vec{l}-\epsilon\mathbbm{1}}}\,\ud t\right)\right]
\le&\frac{1}{T} \int_0^T\mathbb{E}\left[\exp(CT\lfloor Q(t)\rfloor^{2{\vec{l}-\epsilon\mathbbm{1}}})\right] \,\ud t\\\nonumber
\le&\frac{1}{T} \int_0^T\mathbb{E}\left[\exp\left(CTU(X(t))^{\frac{|\,\vec{l}\,|_{\infty}-\epsilon/2}{|\,\vec{l}\,|_{\infty}}}+C\right)\right] \,\ud t\\\nonumber
\le&\frac{C}{T} \int_0^T\mathbb{E}\left[\exp\left(\left(\frac{U(X(t))}{e^{\beta t}}\right)^{\frac{|\,\vec{l}\,|_{\infty}-\epsilon/2}{|\,\vec{l}\,|_{\infty}}}CTe^{\beta t\frac{|\,\vec{l}\,|_{\infty}-\epsilon/2}{|\,\vec{l}\,|_{\infty}}}\right)\right] \,\ud t\\\nonumber
\le&\frac{C}{T} \int_0^T\mathbb{E}\left[\exp\left(\frac{U(X(t))}{e^{\beta t}}\right)\right]^{\frac{|\,\vec{l}\,|_{\infty}-\epsilon/2}{|\,\vec{l}\,|_{\infty}}}\,\ud t\\\nonumber
\le&\frac{C}{T} \int_0^T\mathbb{E}\left[\exp\left(\frac{U(X(t))}{e^{\beta t}}\right)\right]\,\ud t+C
\le C.
\end{align}
Since $\|(\nabla A_0)(X(s))\|\le C\lfloor Q(s)\rfloor^{2{\vec{l}-\epsilon\mathbbm{1}}}+C$, we obtain that
\begin{align}\label{M=1}
&\sup_{r\in[0,T]}\mathbb{E}\left[\sup_{t\in[r,T]}\|D_rX(t)\|^p\right]\\\nonumber
&\le\sup_{r\in[0,T]}\mathbb{E}\left[C\exp\left(\int_r^T p\left(C\lfloor Q(s)\rfloor^{2{\vec{l}-\epsilon\mathbbm{1}}}+C\right)\,\ud s\right)\right]\\\nonumber
&=\mathbb{E}\left[C\exp\left(\int_0^T p\left(C\lfloor Q(s)\rfloor^{2{\vec{l}-\epsilon\mathbbm{1}}}+C\right)\,\ud s\right)\right]\le C,
\end{align}
which completes the proof of \eqref{DK} for $\alpha=1$.

Assuming that \eqref{DK} holds up to the index $\alpha-1,\,\alpha\ge2$,  we divide the sum in \eqref{DXM1}  as
\begin{align*}
&D_{r_1,\ldots,r_\alpha}^{j_1,\ldots,j_\alpha}(X(t))\\
&=\sum_{2\le\nu\le \alpha}\int_{r_1 \vee\cdots\vee r_\alpha}^t(\partial_{k_1} \cdots\partial_{k_\nu}A_0)(X(s)) D_{r(I_1)}^{j(I_1)}\left[X^{k_1}(s)\right] \cdots D_{r(I_\nu)}^{j(I_\nu)}\left[X^{k_\nu}(s)\right]\,\ud s\\
&\quad+\sum_{\kappa=1}^{2m}\int_{r_1 \vee\cdots\vee r_\alpha}^t(\partial_\kappa A_0)(X(s))D_{r_1,\ldots,r_\alpha}^{j_1,\ldots,j_\alpha}\left(X^\kappa(s)\right)\,\ud s.
\end{align*}
By applying the triangle inequality and then taking the supremum over $t_1\le T$, we obtain
\begin{align*}
&\sup_{r_1 \vee\cdots\vee r_\alpha\le t\le t_1}\left\|D_{r_1,\ldots,r_\alpha}^{j_1,\ldots,j_\alpha}(X(t))\right\|\\
&\quad\le \sum_{2\le\nu\le \alpha}\int_{r_1 \vee\cdots\vee r_\alpha}^T\|(\partial_{k_1} \cdots\partial_{k_\nu}A_0)(X(s))\|\left\|D_{r(I_1)}^{j(I_1)}\left[X^{k_1}(s)\right]\right\| \cdots \left\|D_{r(I_\nu)}^{j(I_\nu)}\left[X^{k_\nu}(s)\right]\right\|\,\ud s\\
&\qquad+\int_{r_1 \vee\cdots\vee r_\alpha}^{t_1}\|(\nabla A_0)(X(s))\|\|D_{r_1,\ldots,r_\alpha}^{j_1,\ldots,j_\alpha}(X(s))\|\,\ud s\\
&\quad\le B(T)+\int_{r_1 \vee\cdots\vee r_\alpha}^{t_1}\|(\nabla A_0)(X(s))\|\left(\sup_{r_1 \vee\cdots\vee r_\alpha\le t\le s}\left\|D_{r_1,\ldots,r_\alpha}^{j_1,\ldots,j_\alpha}(X(t))\right\|\right)\,\ud s,
\end{align*}
where
\begin{align*}
&B(T)=\sum_{2\le\nu\le \alpha}\int_{r_1 \vee\cdots\vee r_\alpha}^T\|(\partial_{k_1} \cdots\partial_{k_\nu}A_0)(X(s))\|
\prod_{\zeta=1}^\nu\left\|D_{r(I_\zeta)}^{j(I_\zeta)}\left[X^{k_\zeta}(s)\right]\right\|\,\ud s.
\end{align*}
It follows from the Gronwall lemma that, 
\begin{align*}
\sup_{r_1 \vee\cdots\vee r_\alpha\le t\le T}\left\|D_{r_1,\ldots,r_\alpha}^{j_1,\ldots,j_\alpha}(X(t))\right\|\le B(T)\exp\left(\int_{r_1 \vee\cdots\vee r_\alpha}^T\|C(\nabla A_0)(X(s))\|\,\ud s\right).
\end{align*}
Similar to \eqref{M=1}, there holds that  
\begin{align}\label{M=M}
&\sup_{r_1,\ldots,r_\alpha\in[0,T]}\mathbb{E}\left[\exp\left({\int_{r_1 \vee\cdots\vee r_\alpha}^T\beta\|(\nabla A_0)(X(s))\|\,\ud s}\right)\right]\\\nonumber
&=\mathbb{E}\left[\exp\left({\int_0^T\beta\|(\nabla A_0)(X(s))\|\,\ud s}\right)\right]\le C,
\end{align}
for any $\beta>1$. Combining the fact that $F\in C^\infty_p$ and \eqref{A0}, for all choices of $k_i\in\{1,\ldots,2m\},\, i=1,\ldots,\nu,\,1\le\nu\le \alpha$, we deduce 
\begin{equation*}
\|(\partial_{k_1} \cdots\partial_{k_\nu}A_0)(X(s))\|\le C+\|Q(s)\|^{2|\,\vec{l}\,|_{\infty}}.
\end{equation*}
By induction assumption and the H\"{o}lder inequality, we get for any $q\ge1$,
\begin{align}\label{M2}
&\sup_{r_1,\ldots,r_\alpha\in[0,T]}\mathbb{E}\left[B(T)^q\right]\\\nonumber
&\le C\sum_{2\le\nu\le \alpha}\int_0^T\mathbb{E}\left[\|(\partial_{k_1} \cdots\partial_{k_\nu}A_0)(X(s))\|^q\prod_{\zeta=1}^\nu\left(\sup_{\substack{r_i\in[0,T]\\i\in I_\zeta}}\left\|D_{r(I_\zeta)}^{j(I_\zeta)}[X^{k_\zeta}(s)]\right\|^q\right)\right]\ud s
\le C.
\end{align}
As a result, \eqref{M=M} and \eqref{M2} implies that \eqref{DK} holds for $\alpha$ via the H\"{o}lder inequality.

It follows from \eqref{DK} that
\begin{align}\label{DKp}
\mathbb{E}&\|D^\alpha X(t)\|_{\mathbb{H}^{\bigotimes\alpha}\bigotimes\mathbb{R}^{2m}}^p=\mathbb{E}\|D^\alpha X(t)\|_{L^2\left([0,T]^\alpha;(\mathbb{R}^d)^{\bigotimes\alpha}\bigotimes\mathbb{R}^{2m}\right)}^p\\\nonumber
\le& C(T,p,\alpha)\sup_{r_1,\ldots,r_\alpha\in[0,T]}\mathbb E\left(\sup_{r_1 \vee\cdots\vee r_\alpha\le t\le T}\|D_{r_1,\ldots,r_\alpha}(X(t))\|_{(\mathbb{R}^d)^{\bigotimes\alpha}\bigotimes\mathbb{R}^{2m}}^{p}\right)
\le C,
\end{align}
which completes the proof.
\end{proof}

\section{Splitting AVF scheme} \label{S4}
The bulk of this section presents the exponential integrability property, and the existence and smoothness of the density function for the numerical solution generated through the splitting AVF scheme \eqref{split sol}. To this end, we begin with introducing the splitting AVF scheme.
Let $0=t_0<t_1<\cdots<t_{N^h-1}<t_{N^h}=T$ be a uniform partition of interval $[0,T]$, where $t_n=nh,\,n=0,\ldots,N^h$.
The main idea of constructing the splitting AVF scheme is to split equation \eqref{SDE1} as

\begin{equation*}\label{SSDE2}
\begin{split}
&\,\ud \bar P=-\nabla F(\bar Q)\,\ud t,\,\ud \bar Q=\bar P\,\ud t;\\
&\,\ud \tilde{P}=-v\tilde{P} \,\ud t+\sum_{k=1}^d \sigma_k\,\ud W_{t}^{k},\,\ud \tilde{Q}=0.
\end{split}
\end{equation*} 
Here, the first subsystem is a Hamiltonian system and the second one can be solvable exactly. For the purpose of inheriting the exponential integrability property of the exact solution $X(t)$, we discrete the first subsystem by using the AVF scheme.
Combining it with explicit expression of the exact solution of the second subsystem, we obtain the splitting AVF scheme \eqref{split sol}.
It is readily get  by \eqref{split sol} that
\begin{equation*}
Q_{n+1}=Q_n+hP_n-\frac{h^2}{2}\int_0^1\nabla F(Q_n+\tau(Q_{n+1}-Q_n))\,\ud \tau.
\end{equation*}
Define 
\begin{equation*}
Z(h,P,Q,z)=z-Q-hP+\frac{h^2}{2}\int_0^1\nabla F(Q+\tau (z-Q))\,\ud \tau,
\end{equation*}
then 
\begin{equation*}
\frac{\partial Z}{\partial z}=I+\frac{h^2}{2}\int_0^1\tau\nabla^2 F(Q+\tau (z-Q))\,\ud \tau.
\end{equation*}
Under the assumption that $\nabla^2F$ is bounded below uniformly, we have $\det\left(\frac{\partial Z}{\partial z}\right)\neq 0$  as long as $h<\frac{2}{\sqrt{K}}$, which implies that \eqref{split sol} is solvable due to the implicit function theorem. In particular, if $F$ is a convex function, the proposed scheme is solvable for any stepsize $h>0$.

\subsection{Exponential integrability property of the numerical approximation}
In this part, we prove the exponential integrability property  of $X_n$, which is helpful for deducing the strong convergence rate in Section \ref{S5}. For simplicity, we denote $\bar X_{n}:=(\bar P_{n}^\top,\bar Q_{n}^\top)^\top$ with $\bar P_{n},\bar Q_{n}$ defined by \eqref{split sol}, for $n=1,\ldots,N^h$.
\begin{pro}\label{EqE}
For any $\beta\ge K_0\left(\sum\limits_{k=1}^d \|\sigma_k\|^2-2v\right),$
\begin{equation}\label{EqNI}
\sup_{n\le N^h}\mathbb{E}\left[\exp\left(\frac{U(X_n)}{e^{\beta t_n}}\right)\right]\le C(\beta)e^{U(X(0))}.
\end{equation}
\end{pro}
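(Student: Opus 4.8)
The plan is to mimic the continuous-time argument behind Proposition \ref{EEI}, but adapted to the one-step structure of scheme \eqref{split sol}. The key observation is that the splitting construction decouples the effect of the AVF (Hamiltonian) substep from that of the Ornstein--Uhlenbeck substep. Since the first subsystem $\ud\bar P=-\nabla F(\bar Q)\,\ud t,\ \ud\bar Q=\bar P\,\ud t$ is Hamiltonian with Hamiltonian $\tfrac12\|\bar P\|^2+F(\bar Q)$, and the AVF scheme is an energy-preserving integrator, the quantity $\tfrac12\|\bar P_{n+1}\|^2+F(\bar Q_{n+1})$ equals $\tfrac12\|P_n\|^2+F(Q_n)$ exactly. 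This is the discrete analogue of the fact that $-\nabla F(Q)\,\ud t$ and $P\,\ud t$ contribute nothing to $\ud U$ along the drift. Concretely, one checks from \eqref{split sol} that
\[
\tfrac12\|\bar P_{n+1}\|^2-\tfrac12\|P_n\|^2
=\tfrac12\langle \bar P_{n+1}+P_n,\bar P_{n+1}-P_n\rangle
=\tfrac{1}{h}\langle \bar Q_{n+1}-Q_n,\bar P_{n+1}-P_n\rangle,
\]
while $F(\bar Q_{n+1})-F(Q_n)=\int_0^1\langle\nabla F(Q_n+\tau(\bar Q_{n+1}-Q_n)),\bar Q_{n+1}-Q_n\rangle\,\ud\tau$, and the defining relation for $\bar P_{n+1}$ shows these two increments cancel. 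Hence $U(\bar X_{n+1})=U(X_n)$.

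It remains to control the OU substep $P_{n+1}=e^{-vh}\bar P_{n+1}+\xi_{n+1}$, where $\xi_{n+1}:=\sum_{k=1}^d\int_{t_n}^{t_{n+1}}e^{-v(t_{n+1}-t)}\sigma_k\,\ud W_t^k$ is Gaussian, independent of $\mathscr{F}_{t_n}$, with covariance $\Sigma_h:=\int_0^h e^{-2vs}\,\ud s\sum_{k=1}^d\sigma_k\sigma_k^\top\preceq h\sum_k\sigma_k\sigma_k^\top$, and $Q_{n+1}=\bar Q_{n+1}$. The next step is to express $U(X_{n+1})$ in terms of $U(\bar X_{n+1})=U(X_n)$. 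Writing
\[
\tfrac12\|P_{n+1}\|^2=\tfrac12\|e^{-vh}\bar P_{n+1}+\xi_{n+1}\|^2
=\tfrac{e^{-2vh}}{2}\|\bar P_{n+1}\|^2+e^{-vh}\langle\bar P_{n+1},\xi_{n+1}\rangle+\tfrac12\|\xi_{n+1}\|^2,
\]
and noting $F(Q_{n+1})=F(\bar Q_{n+1})$, one obtains
\[
U(X_{n+1})=e^{-2vh}\,U(X_n)+(1-e^{-2vh})K_0 C_0 +(1-e^{-2vh})K_0 F(\bar Q_{n+1})+K_0 e^{-vh}\langle\bar P_{n+1},\xi_{n+1}\rangle+\tfrac{K_0}{2}\|\xi_{n+1}\|^2.
\]
(The term $F(\bar Q_{n+1})\ge -C_0$ can be absorbed, so this is $\le e^{-2vh}U(X_n)+K_0 e^{-vh}\langle\bar P_{n+1},\xi_{n+1}\rangle+\tfrac{K_0}{2}\|\xi_{n+1}\|^2$ plus a harmless constant; the sign here matters for the next step.) Then I would condition on $\mathscr{F}_{t_n}$ and use the Gaussian Laplace-transform identity: for a Gaussian vector $\xi\sim N(0,\Sigma)$ and deterministic $a$, $\mathbb{E}[\exp(\langle a,\xi\rangle+\tfrac12\langle\xi,M\xi\rangle)]=\det(I-\Sigma M)^{-1/2}\exp(\tfrac12\langle a,(I-\Sigma M)^{-1}\Sigma a\rangle)$ provided $\Sigma M\prec I$. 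Applying this with $a=K_0 e^{-vh}\bar P_{n+1}/e^{\beta t_{n+1}}$ and $M=(K_0/e^{\beta t_{n+1}})I$ — note $\|\Sigma_h M\|\le h K_0\sum_k\|\sigma_k\|^2 e^{-\beta t_{n+1}}$ is $<1$ for $h$ small — gives, after the quadratic-in-$\bar P_{n+1}$ terms are collected,
\[
\mathbb{E}\!\left[\exp\!\Big(\tfrac{U(X_{n+1})}{e^{\beta t_{n+1}}}\Big)\,\Big|\,\mathscr{F}_{t_n}\right]
\le C(\beta,h)\,\exp\!\Big(\tfrac{e^{-2vh}}{e^{\beta t_{n+1}}}U(X_n)+\tfrac{c(h)}{e^{\beta t_{n+1}}}\|\bar P_{n+1}\|^2\Big),
\]
where $c(h)=\tfrac12 K_0^2 e^{-2vh}\|\Sigma_h\|(1-\|\Sigma_h M\|)^{-1}=\mathcal{O}(h^2)$. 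Since $\tfrac12\|\bar P_{n+1}\|^2\le U(X_n)/K_0+C_0$ (because $F\ge -C_0$), this exponent is bounded by $\tfrac{1}{e^{\beta t_{n+1}}}(e^{-2vh}+\mathcal{O}(h^2))U(X_n)$ plus a constant. The crucial inequality to verify is
\[
e^{-2vh}+\mathcal{O}(h^2)\;\le\;e^{(\,\sum_k\|\sigma_k\|^2 K_0/K_0-\,2v)h}\cdot e^{-\beta' h}\quad\text{-- i.e. }\; e^{-2vh}+\mathcal O(h^2)\le e^{(\beta/K_0-(\beta/K_0+2v-\beta/K_0))h},
\]
more precisely one wants $e^{-2vh}+\mathcal{O}(h^2)\le e^{-\beta h/e^{\beta t_n}\cdot\text{(something)}}$; the clean way is: for $\beta\ge K_0(\sum_k\|\sigma_k\|^2-2v)$ the constant $C(\beta,h)$ from the Laplace transform satisfies $C(\beta,h)\le C(\beta)^{1/N^h}$ up to the leading order, and $e^{-2vh}+\mathcal O(h^2)\le e^{\beta(t_n-t_{n+1})/e^{\beta t_n}}$ — wait, the correct bookkeeping is that $\tfrac{e^{-2vh}}{e^{\beta t_{n+1}}}\le\tfrac{1}{e^{\beta t_n}}$ exactly when $e^{-2vh}\le e^{-\beta h}$, i.e. $\beta\le 2v$, which is too restrictive; instead one exploits the $\mathcal O(h^2)$ slack together with $\beta\ge K_0(\sum_k\|\sigma_k\|^2-2v)$ so that $e^{-2vh}+K_0\sum_k\|\sigma_k\|^2\cdot\tfrac{h}{2}e^{-2vh}+\mathcal O(h^2)\le e^{\beta h}\cdot e^{-2vh}\cdot e^{K_0\sum_k\|\sigma_k\|^2 h}/e^{\beta h}$... — **this normalization bookkeeping is the main obstacle**, and it is exactly where Assumption on $\beta$ enters. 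I expect the resolution to be: choose the recursion so that $\mathbb E[\exp(U(X_{n+1})/e^{\beta t_{n+1}})\mid\mathscr F_{t_n}]\le e^{Lh}\exp(U(X_n)/e^{\beta t_n})$ with $L$ independent of $h$ and $n$, using $e^{-2vh}=1-2vh+\mathcal O(h^2)$ and absorbing the $+\tfrac12 K_0\sum_k\|\sigma_k\|^2 h$ from the noise into the admissible range $\beta\ge K_0(\sum_k\|\sigma_k\|^2-2v)$ so that the net exponent multiplier on $U(X_n)$ does not exceed $e^{\beta t_n}/e^{\beta t_{n+1}}\cdot e^{\mathcal O(h^2)}=e^{-\beta h+\mathcal O(h^2)}$, with $-\beta h$ dominating $-2vh+K_0\sum_k\|\sigma_k\|^2 h$.

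Granting the one-step estimate $\mathbb{E}[\exp(U(X_{n+1})/e^{\beta t_{n+1}})\mid\mathscr{F}_{t_n}]\le e^{Lh}\exp(U(X_n)/e^{\beta t_n})$, I would close the argument by the tower property and induction on $n$: taking expectations iteratively yields $\mathbb{E}[\exp(U(X_n)/e^{\beta t_n})]\le e^{Lnh}\exp(U(X_0))\le e^{LT}e^{U(X(0))}=C(\beta)e^{U(X(0))}$ for all $n\le N^h$, since $X_0$ is deterministic and equals $X(0)$. This is precisely \eqref{EqNI}. The whole scheme parallels the proof of Proposition \ref{EEI} via Lemma \ref{exp} — the energy-preservation of AVF playing the role of the vanishing of the Hamiltonian vector field's contribution to $DU\cdot\mu$, and the Gaussian Laplace transform playing the role of the $\tfrac12\|\sigma^*DU\|^2/e^{\beta t}$ and the $\bar U$ correction terms in \eqref{exp1} — so one could also phrase it by applying Lemma \ref{exp} to a continuous-time interpolation of the two subsystems and then checking \eqref{exp1} holds piecewise; but the direct discrete computation above is cleaner. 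The only genuinely delicate point, as noted, is tracking the constants in the exponent through the OU step so that the accumulated multiplicative error over $N^h=T/h$ steps stays bounded; everything else is bookkeeping once the energy identity $U(\bar X_{n+1})=U(X_n)$ is in hand.
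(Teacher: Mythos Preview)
Your identification of the energy-preservation identity $U(\bar X_{n+1})=U(X_n)$ is correct and is indeed the crux of the argument. However, your direct Gaussian-Laplace computation for the OU step does not close: you yourself flag the ``normalization bookkeeping'' as the main obstacle and never resolve it. The difficulty is real. In your decomposition
\[
U(X_{n+1})=e^{-2vh}U(X_n)+(1-e^{-2vh})K_0\bigl(F(\bar Q_{n+1})+C_0\bigr)+K_0e^{-vh}\langle\bar P_{n+1},\xi_{n+1}\rangle+\tfrac{K_0}{2}\|\xi_{n+1}\|^2,
\]
the term $(1-e^{-2vh})K_0(F(\bar Q_{n+1})+C_0)$ is \emph{positive} and unbounded above, so it cannot be ``absorbed'' as you suggest. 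Bounding it by $U(X_n)$ collapses the factor $e^{-2vh}$ back to $1$, which costs you precisely the $-2v$ that is needed for the sharp range $\beta\ge K_0(\sum_k\|\sigma_k\|^2-2v)$. One can instead keep the exact identity $U(X_{n+1})=U(X_n)-(1-e^{-2vh})\tfrac{K_0}{2}\|\bar P_{n+1}\|^2+K_0e^{-vh}\langle\bar P_{n+1},\xi_{n+1}\rangle+\tfrac{K_0}{2}\|\xi_{n+1}\|^2$ and try to balance the negative quadratic in $\bar P_{n+1}$ against the quadratic produced by the Laplace transform, but getting the constants to match the stated threshold on $\beta$ this way is delicate and you do not carry it out.

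The paper bypasses all of this by the route you mention at the very end and then set aside: it observes that the OU substep is the \emph{exact} solution on $[t_n,t_{n+1}]$ of the continuous SDE $\ud\tilde P=-v\tilde P\,\ud t+\sigma\,\ud W_t$, $\ud\tilde Q=0$, started from $(\bar P_{n+1},\bar Q_{n+1})$, and simply applies Lemma~\ref{exp} to $\tilde X$ on that subinterval. Since the drift/diffusion of this auxiliary process satisfy \eqref{exp1} with the same $\beta$ and the same $\bar U\equiv -\tfrac{K_0}{2}\sum_k\|\sigma_k\|^2$ as in Proposition~\ref{EEI} (indeed the verification is a subset of that one), Lemma~\ref{exp} yields directly
\[
\mathbb{E}\Bigl[\exp\Bigl(\tfrac{U(X_{n+1})}{e^{\beta t_{n+1}}}\Bigr)\Bigr]
\le
\mathbb{E}\Bigl[\exp\Bigl(\tfrac{U(X_n)}{e^{\beta t_n}}\Bigr)\Bigr]
\exp\!\Bigl(\tfrac{K_0}{2\beta}\sum_k\|\sigma_k\|^2\,(e^{-\beta t_n}-e^{-\beta t_{n+1}})\Bigr),
\]
using $U(\tilde X(t_n))=U(\bar X_{n+1})=U(X_n)$. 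The telescoping product over $n$ then gives the explicit constant $C(\beta)=\exp\bigl(\tfrac{K_0}{2\beta}\sum_k\|\sigma_k\|^2\bigr)$. No Gaussian moment-generating-function computation, no balancing of $\mathcal O(h^2)$ terms, and the threshold on $\beta$ falls out exactly. Your instinct that this alternative is ``less clean'' is backwards: it is the clean argument, and it is what the paper does.
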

\begin{proof}
Notice that the AVF scheme preserves the Hamiltonian $U$ exactly, i.e., $U(\bar X_{n+1})=U(X_n)$ for $n=0,\ldots,N^h-1$ (see e.g. \cite[Proposition 2]{CCH16}). We define an auxiliary process $\tilde X(t)=(\tilde P(t)^\top,\tilde Q(t)^\top)^\top$ satisfying
\begin{equation*}
\left\{
\begin{split}
&\ud \tilde{P}=-v\tilde{P} \,\ud t+\sum_{k=1}^d \sigma_k\,\ud W_{t}^{k},\,t\in(t_n,t_{n+1}],\\
&\ud \tilde{Q}=0
\end{split}
\right.
\end{equation*}
with $\left(\tilde P(t_n)^\top,\tilde Q(t_n)^\top\right)^\top=\left(\bar P_{n+1}^\top,\bar Q_{n+1}^\top\right)^\top,\, \forall\,n=0,\ldots,N^h-1.$
By similar arguments in the proof of \eqref{EqEI}, we obtain 
\begin{align*}
\mathbb{E}\left[\exp\left(\frac{U(\tilde X(t_{n+1}))}{e^{\beta t_{n+1}}}\right)\right]
\le\mathbb{E}\left[\exp\left(\frac{U(\tilde X(t_n))}{e^{\beta t_n}}\right)\right]\exp\left[\left(\frac{K_0}{2\beta}\sum_{k=1}^d\|\sigma_k\|^2\right)(e^{-\beta t_n}-e^{-\beta t_{n+1}})\right].
\end{align*}
Since $U(\tilde X(t_n))=U(\bar X(t_{n+1}))=U(X_n)$ and $U(\tilde X(t_{n+1}))=U(X_{n+1})$, we have
 \begin{align*}
&\mathbb{E}\left[\exp\left(\frac{U(X_{n+1})}{e^{\beta t_{n+1}}}\right)\right]\le\mathbb{E}\left[\exp\left(\frac{U(X_n)}{e^{\beta t_n}}\right)\right]\exp\left[\left(\frac{K_0}{2\beta}\sum_{k=1}^d\|\sigma_k\|^2\right)(e^{-\beta t_n}-e^{-\beta t_{n+1}})\right].
\end{align*}
As a consequence,
\begin{align*}
\sup_{n\le N^h}\mathbb{E}\left[\exp\left(\frac{U(X_n)}{e^{\beta t_n}}\right)\right]
&\le\prod_{i=0}^{N^h-1}\exp\left[\left(\frac{K_0}{2\beta}\sum_{k=1}^d\|\sigma_k\|^2\right)(e^{-\beta t_i}-e^{-\beta t_{i+1}})\right]e^{U(X(0))}\\
&\le\exp\left(\frac{K_0}{2\beta}\sum_{k=1}^d\|\sigma_k\|^2\right)e^{U(X(0))},
\end{align*}
which completes the proof.
\end{proof}
Furthermore, the following moment boundedness result of the numerical solutions $X_n$ and $\bar X_n$ is established by using It\^o's formula and  the Burkholder-Davis-Gundy inequality.
\begin{lem}\label{EE}
For any $p\ge1$,  there exists $C=C(T,\sigma,X(0),p)>0$ such that
\begin{align*}
&\mathbb{E}\left[\sup_{n\le N^h}|U(\bar X_n)|^p\right]+ \mathbb{E}\left[\sup_{n\le N^h}|U(X_n)|^p\right]\le C.
\end{align*}
\end{lem}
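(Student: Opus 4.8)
The plan is to prove Lemma \ref{EE} by applying It\^o's formula to $U(\tilde X(t))$ on each subinterval $(t_n,t_{n+1}]$, where $\tilde X$ is the auxiliary Ornstein--Uhlenbeck-type interpolation introduced in the proof of Proposition \ref{EqE}. First I would recall that the AVF step preserves $U$ exactly, so $U(\bar X_{n+1})=U(X_n)$, and hence it suffices to control $U(\tilde X(t_{n+1}))=U(X_{n+1})$ in terms of $U(\tilde X(t_n))=U(X_n)$. Applying It\^o's formula to $U(\tilde X(t))=K_0\bigl(\tfrac12\|\tilde P(t)\|^2+F(\tilde Q(t))+C_0\bigr)$ on $(t_n,t_{n+1}]$, and using that $\tilde Q$ is constant there while $\ud\tilde P=-v\tilde P\,\ud t+\sum_k\sigma_k\,\ud W^k_t$, gives
\begin{equation*}
U(\tilde X(t))=U(\tilde X(t_n))+\int_{t_n}^t\Bigl(-vK_0\|\tilde P(s)\|^2+\tfrac{K_0}{2}\sum_{k=1}^d\|\sigma_k\|^2\Bigr)\ud s+\int_{t_n}^t K_0\langle\tilde P(s),\sigma\,\ud W_s\rangle.
\end{equation*}
The drift term is bounded above by $\tfrac{K_0}{2}\sum_k\|\sigma_k\|^2\,h$ (discarding the nonpositive $-vK_0\|\tilde P\|^2$), so summing over $n$ one obtains, for any stopping-time-free bound, $\sup_{n\le N^h}U(X_n)\le U(X(0))+\tfrac{K_0T}{2}\sum_k\|\sigma_k\|^2+\max_{n}\bigl|\sum_{i=0}^{n-1}M_i\bigr|$, where $M_i$ is the martingale increment over $(t_i,t_{i+1}]$.

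Next I would take the $p$-th power and expectation. The deterministic part contributes a constant depending only on $T,\sigma,X(0),p$. For the martingale part, note that $N_n:=\sum_{i=0}^{n-1}M_i=\int_0^{t_n}K_0\langle\tilde P(\lfloor s\rfloor^+),\sigma\,\ud W_s\rangle$ is a genuine $\{\mathscr{F}_{t_n}\}$-martingale (here $\tilde P$ restricted to each subinterval is adapted and square integrable), so by the Burkholder--Davis--Gundy inequality
\begin{equation*}
\mathbb{E}\Bigl[\sup_{n\le N^h}|N_n|^p\Bigr]\le C_p\,\mathbb{E}\Bigl[\Bigl(\int_0^T K_0^2\|\sigma^\top\tilde P(s)\|^2\,\ud s\Bigr)^{p/2}\Bigr]\le C_p\,K_0^p\|\sigma\|^p\,T^{p/2-1}\int_0^T\mathbb{E}\bigl[\|\tilde P(s)\|^p\bigr]\ud s.
\end{equation*}
It therefore remains to bound $\sup_{s\in[0,T]}\mathbb{E}[\|\tilde P(s)\|^p]$ uniformly in $h$; since $\|\tilde P(s)\|^2\le \tfrac{2}{K_0}U(\tilde X(s))$ and $F$ is bounded below, this follows from Proposition \ref{EqE} (the exponential integrability already gives all polynomial moments of $U(X_n)$, hence of $U(\bar X_{n+1})=U(X_n)$, and a short Gronwall/BDG argument inside each subinterval transfers this to $\tilde P(s)$ for $s\in(t_n,t_{n+1}]$). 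Finally, the bound on $\mathbb{E}[\sup_n|U(\bar X_n)|^p]$ is immediate from $U(\bar X_{n+1})=U(X_n)$ and $U(\bar X_0)=U(X_0)$, so the two bounds in the statement follow together.

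The main obstacle is the apparent circularity of controlling $\sup_{s\le T}\mathbb{E}[\|\tilde P(s)\|^p]$ before the lemma is proved: one must not invoke Lemma \ref{EE} itself. The clean way around this is to use only Proposition \ref{EqE}, which is proved independently via the exponential integrability Lemma \ref{exp}; from $\sup_{n}\mathbb{E}[\exp(U(X_n)/e^{\beta t_n})]\le C$ one extracts $\sup_n\mathbb{E}[|U(X_n)|^p]<\infty$ for every $p$ by the elementary inequality $x^p\le p!\,e^{cx}c^{-p}$ applied with $c=e^{-\beta T}$, and then for $s\in(t_n,t_{n+1}]$ one writes $\tilde P(s)=e^{-v(s-t_n)}\bar P_{n+1}+\int_{t_n}^s e^{-v(s-r)}\sigma\,\ud W_r$, whence $\mathbb{E}[\|\tilde P(s)\|^p]\le C\bigl(\mathbb{E}[\|\bar P_{n+1}\|^p]+h^{p/2}\|\sigma\|^p\,C_p\bigr)$ and $\|\bar P_{n+1}\|^2\le\tfrac{2}{K_0}U(\bar X_{n+1})=\tfrac{2}{K_0}U(X_n)$ closes the estimate with a constant independent of $h$. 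A minor technical point is to take the supremum over $n$ \emph{inside} the expectation only after summing the (non-random-signed) drift bounds and applying BDG to the running maximum of the martingale $N_n$, which is exactly where the Burkholder--Davis--Gundy inequality is needed rather than a crude term-by-term bound.
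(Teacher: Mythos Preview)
Your proposal is correct and follows exactly the approach the paper indicates (It\^o's formula on each subinterval followed by the Burkholder--Davis--Gundy inequality for the martingale part); the paper itself only sketches this and omits the details you have supplied. One small notational slip: in your expression for $N_n$ the integrand should simply be the piecewise-defined $\tilde P(s)$, not $\tilde P(\lfloor s\rfloor^{+})$, but this does not affect the argument.
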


\subsection{Probability Density Function}
After proving the existence and smoothness of the density function of the exact solution, it's a natural question to ask whether the numerical scheme could inherit  these properties (see e.g. \cite{BT96,HW96,KHA97}). 
In particular, 
for SDEs with superlinearly growing nonlinearities and degenerate additive noises, to the best of our knowledge, there exists no result on the existence of the density function of the numerical approximation.
In this part,
we  give a probabilistic proof of the existence of the density function of the numerical solution of stochastic Langevin equation with non-globally monotone coefficient under H\"ormander's condition.
 
Compared to the continuous case, it is more involved to establish the existence of the density function of the numerical  approximation even though the H\"{o}rmander condition holds.
We would like to mention that
in general case, H\"{o}rmander's condition is not a sufficient condition for the validity of the existence of the density function of the numerical solution. 

Similar to the proof of \cite[Theorem 2.2.1]{DN06}, the Malliavin derivative of $X_{n+1}$ exists and satisfies, for $r\in[0,t_n]$,
\begin{align*}
&D_rP_{n+1}=e^{-vh}\left(D_rP_n-h\int_0^1\nabla^2 F(Q_n+\tau(Q_{n+1}-Q_n))(D_rQ_n+\tau(D_rQ_{n+1}-D_rQ_n))\ud \tau\right),\\
&D_rQ_{n+1}=D_rQ_n+hD_rP_n-\frac{h^2}{2}\int_0^1 \nabla^2 F(Q_n+\tau(Q_{n+1}-Q_n))(D_rQ_n+\tau(D_rQ_{n+1}-D_rQ_n))\,\ud \tau,
\end{align*}
and for $r\in(t_n,t_{n+1}]$,
\begin{equation}\label{DX2}
\begin{split}
&D_rP_{n+1}=e^{-v(t_{n+1}-r)}\sigma,\\
&D_rQ_{n+1}=0.
\end{split}
\end{equation}
For simplicity, we introduce the following $m\times m$ symmetric matrices,
\begin{align*}
&F_1(Q_n,Q_{n+1}):=\int_0^1\nabla^2 F(Q_n+\tau(Q_{n+1}-Q_n))\tau\,\ud \tau,\\
&F_2(Q_n,Q_{n+1}):=\int_0^1\nabla^2 F(Q_n+\tau(Q_{n+1}-Q_n))(1-\tau)\,\ud \tau,
\end{align*} 
and get
\begin{align*}
&\int_0^1\nabla^2 F(Q_n+\tau(Q_{n+1}-Q_n))(D_rQ_n+\tau(D_rQ_{n+1}-D_rQ_n))\,\ud \tau\\
&=F_1(Q_n,Q_{n+1})D_rQ_{n+1}+F_2(Q_n,Q_{n+1})D_rQ_n.
\end{align*}
Therefore, for $r\in[0,t_n]$, we have
\begin{equation*}
\left[\begin{array}{cc}I & he^{-vh}F_1(Q_n,Q_{n+1})\\0 & I+\frac{h^2}{2}F_1(Q_n,Q_{n+1})\end{array}\right]\left[\begin{array}{cc}D_rP_{n+1}\\D_rQ_{n+1}\end{array}\right]=\left[\begin{array}{cc}e^{-vh}I & he^{-vh}F_2(Q_n,Q_{n+1})\\hI & I-\frac{h^2}{2}F_2(Q_n,Q_{n+1})\end{array}\right]\left[\begin{array}{cc}D_rP_n\\D_rQ_n\end{array}\right].
\end{equation*}
Since $\nabla^2F$ is bounded below by $-K$ uniformly, we have 
\begin{align*}
&\lambda_{min}(F_1(Q_n,Q_{n+1}))=\inf_{\|y\|_2=1}\int_0^1 \tau y^\top\nabla^2F(Q_n+\tau(Q_{n+1}-Q_n))y\,\ud \tau\ge\frac{-K}{2},\\
&\lambda_{min}(F_2(Q_n,Q_{n+1}))=\inf_{\|y\|_2=1}\int_0^1(1-\tau)y^\top\nabla^2 F(Q_n+\tau(Q_{n+1}-Q_n))y\,\ud \tau\ge\frac{-K}{2},
\end{align*}
which imply that the matrix $I+\frac{h^2}{2}F_1(Q_n,Q_{n+1})$ is invertible for any $h<\frac{2}{\sqrt{K}}$ and $n=0,\ldots,N^h-1.$

 In order to judge whether $\gamma_n$ is invertible,
 we next proceed to derive a recursive relationship between $\gamma_{n+1}$ and $\gamma_n$. Notice that if $I+\frac{h^2}{2}F_1(Q_n,Q_{n+1})$ is invertible, then 
\begin{align}\label{DX}
D_rX_{n+1}=A_nD_rX_n, \,r\in[0,t_n],
\end{align}
where $D_rX_n=\left[\begin{array}{c}D_rP_n\\D_rQ_n\end{array}\right]$ and 
\begin{align*}
A_n
=\left[\begin{array}{cc}I & -he^{-vh}F_1(Q_n,Q_{n+1})\left(I+\frac{h^2}{2}F_1\left(Q_n,Q_{n+1}\right)\right)^{-1}\\0 & \left(I+\frac{h^2}{2}F_1\left(Q_n,Q_{n+1}\right)\right)^{-1}\end{array}\right]\left[\begin{array}{cc}e^{-vh}I & he^{-vh}F_2(Q_n,Q_{n+1})\\hI & I-\frac{h^2}{2}F_2(Q_n,Q_{n+1})\end{array}\right].
\end{align*}
From \eqref{DX2} and \eqref{DX}, it follows that
\begin{align}\label{MX}
\gamma_{n+1}:=&\int_0^{t_{n+1}}D_rX_{n+1}(D_rX_{n+1})^\top\,\ud r\\\nonumber
=&\int_0^{t_n}D_rX_{n+1}(D_rX_{n+1})^\top\,\ud r+\int_{t_n}^{t_{n+1}}D_rX_{n+1}(D_rX_{n+1})^\top\,\ud r\\\nonumber
=&\int_0^{t_n}A_nD_rX_n(D_rX_n)^\top A_n^\top\,\ud r+\int_{t_n}^{t_{n+1}}D_rX_{n+1}(D_rX_{n+1})^\top\,\ud r\\
=&A_n\gamma_nA_n^\top+\frac{1-e^{-2vh}}{2v}\left[\begin{array}{cc}\sigma\sigma^\top&0\\0&0\end{array}\right],\,a.s.\nonumber
\end{align}

Now we turn to showing the following regularity estimate of $X_n$ in Malliavin sense.

\begin{lem}\label{NDI}
Let Assumption \ref{F2} hold, then 
\begin{equation}\label{eq43:1}
X_n\in \mathbb D^\infty(\mathbb{R}^{2m}),\, n=1,\ldots,N^h.
\end{equation}
 More precisely, there exists a positive constant $h_0$ such that for any $h\in(0,h_0]$, $\alpha\ge1$ and $p\ge1$,
\begin{equation}\label{eq43:2}
\sup\limits_{r_1,\cdots,r_\alpha \in [0,T]}\mathbb{E}\left[\sup\limits_{r_1\lor \cdots \lor r_\alpha\le t_n\le T}\|D_{r_1,\ldots,r_\alpha}X_n\|^p\right]\le C,\, n=1,\ldots,N^h,
\end{equation}
holds for some positive constant $C=C(\alpha,p)$.

\end{lem}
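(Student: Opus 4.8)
The plan is to establish \eqref{eq43:1} and \eqref{eq43:2} simultaneously by an induction on the order $\alpha$ of the Malliavin derivative, mirroring the structure of the proof of Lemma~\ref{NS} but replacing the Gronwall-type argument for the SDE with a discrete-in-$n$ recursion coming from \eqref{DX}. Fix $h_0<2/\sqrt{K}$ so that $I+\frac{h^2}{2}F_1(Q_n,Q_{n+1})$ is invertible and \eqref{DX} is valid; we will repeatedly use that its inverse, and hence the matrix $A_n$, is bounded in operator norm by $C(1+\lfloor Q_n\rfloor^{2\vec l-\epsilon\mathbbm 1}+\lfloor Q_{n+1}\rfloor^{2\vec l-\epsilon\mathbbm 1})$ via \eqref{FQ2}, together with the exponential integrability estimate \eqref{EqNI} (and Lemma~\ref{EE}) to turn products over $n=1,\dots,N^h$ of such factors into a bound uniform in $h$. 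The crucial quantitative input is that a product $\prod_{i} (1+\|A_i\|)$ over $O(1/h)$ steps must stay $L^p$-bounded uniformly in $h$; this is exactly what the exponential integrability buys us, since $\sum_i h\,(C\lfloor Q_i\rfloor^{2\vec l-\epsilon\mathbbm 1}+C)$ is a Riemann-type sum whose exponential moment is controlled just as in \eqref{EEB}.

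For the base case $\alpha=1$: from \eqref{DX2} and \eqref{DX}, for $r\in(t_{k},t_{k+1}]$ one has $D_rX_n=A_{n-1}A_{n-2}\cdots A_{k+1}\,D_rX_{k+1}$ with $\|D_rX_{k+1}\|\le\|\sigma\|$ by \eqref{DX2}, so
\begin{equation*}
\sup_{k+1\le n\le N^h}\|D_rX_n\|\le \|\sigma\|\prod_{i=k+1}^{N^h-1}\bigl(1+\|A_i\|\bigr)\le \|\sigma\|\exp\Bigl(\sum_{i=k+1}^{N^h-1}\|A_i\|\Bigr).
\end{equation*}
Bounding $\|A_i\|\le C h\,(1+\lfloor Q_i\rfloor^{2\vec l-\epsilon\mathbbm 1}+\lfloor Q_{i+1}\rfloor^{2\vec l-\epsilon\mathbbm 1})+C h$ (the $F_1,F_2$ blocks carry a factor $h$, while the identity blocks contribute the leading $1$; one checks $\|A_i\|=1+O(h(1+\lfloor Q_i\rfloor^{2\vec l-\epsilon\mathbbm 1}))$ more carefully by expanding the block product) and invoking the discrete analogue of \eqref{EEB} — namely that $\mathbb E[\exp(p\sum_{i=0}^{N^h-1}h(C\lfloor Q_i\rfloor^{2\vec l-\epsilon\mathbbm 1}+C))]\le C$ uniformly in $h$, which follows from \eqref{EqNI} exactly as \eqref{EEB} follows from \eqref{EqEI} — yields \eqref{eq43:2} for $\alpha=1$. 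Since $\gamma_n=\int_0^{t_n}D_rX_n(D_rX_n)^\top\,\ud r$ is then in $L^{\infty-}(\Omega)$ and the same bound controls $\|D X_n\|_{\mathbb H\otimes\mathbb R^{2m}}$ as in \eqref{DKp}, we get $X_n\in\mathbb D^{1,\infty}$.

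For the inductive step, assume \eqref{eq43:2} holds for all orders up to $\alpha-1$. Differentiating the defining relations of scheme \eqref{split sol} $\alpha$ times and using the chain rule gives, for $r_1\vee\cdots\vee r_\alpha\le t_n$, a relation of the form $D^{(\alpha)}X_{n+1}=A_n\,D^{(\alpha)}X_n+R_n$, where $R_n$ is a finite sum of terms each of which is $h$ times a product of higher derivatives of $F$ evaluated along $(Q_n,Q_{n+1})$ against lower-order Malliavin derivatives $D^{(\nu_1)}X_n,\dots$ with $\nu_1+\cdots<\alpha$ (and an analogous boundary contribution on $(t_n,t_{n+1}]$, which in fact vanishes for $\alpha\ge2$ since \eqref{DX2} is affine in $W$). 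Iterating, $D^{(\alpha)}X_n=\sum_{k} (A_{n-1}\cdots A_{k+1})R_k$, and one bounds $\|R_k\|\le Ch(1+\|Q_k\|^{2|\vec l|_\infty}+\|Q_{k+1}\|^{2|\vec l|_\infty})\prod(\text{lower-order derivative norms})$ using $F\in C^\infty_p$ and the polynomial bound $\|\partial^\alpha F(y)\|\le C(1+\|y\|^{2|\vec l|_\infty})$. Then H\"older's inequality splits off the product of $A$-factors (bounded in every $L^p$ by the base-case argument), the $\|Q_k\|^{2|\vec l|_\infty}$ factors (bounded by Lemma~\ref{EE}), and the lower-order Malliavin norms (bounded by the induction hypothesis); the sum over $k$ of $h$ is $\le T$, so \eqref{eq43:2} follows for order $\alpha$. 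Finally, translating the pointwise-in-$(r_1,\dots,r_\alpha)$ bound into the $\mathbb H^{\otimes\alpha}\otimes\mathbb R^{2m}$-norm exactly as in \eqref{DKp} gives $X_n\in\mathbb D^{\alpha,\infty}$ for every $\alpha$, hence $X_n\in\mathbb D^\infty(\mathbb R^{2m})$.

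The main obstacle is the uniformity in $h$ of the product $\prod_{i=0}^{N^h-1}(1+\|A_i\|)$: a crude bound $\|A_i\|\le 1+Ch(1+\|Q_i\|^{\text{poly}})$ gives a product like $\exp(C\sum_i h\|Q_i\|^{\text{poly}})$, and for superlinear $F$ (i.e. $|\vec l|_\infty>1$) the moments of $\sum_i h\|Q_i\|^{2|\vec l|_\infty}$ are \emph{not} finite — so one genuinely needs the sharper bound $\|A_i\|\le 1+Ch(1+\lfloor Q_i\rfloor^{2\vec l-\epsilon\mathbbm 1})$, coming from \eqref{FQ2} (which has the crucial $-\epsilon\mathbbm 1$ reduction in the exponent), and then the exponential integrability of $U$ with the sub-linear-in-$U$ exponent $\tfrac{|\vec l|_\infty-\epsilon/2}{|\vec l|_\infty}$, precisely as exploited in \eqref{EEB}. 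Keeping track of this at every derivative order, and checking that the higher-order remainder terms $R_k$ (which involve the honestly polynomially-growing $\partial^\alpha F$) are harmless because they come with an extra factor $h$ and only appear linearly, is the technical heart of the argument.
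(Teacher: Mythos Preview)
Your proposal is correct and follows essentially the same route as the paper: induction on $\alpha$, a discrete Gronwall recursion whose product of factors is controlled via the exponential integrability \eqref{EqNI} precisely as in \eqref{EEB}, and for higher $\alpha$ the same ``linear part $+$ remainder'' decomposition with the remainder bounded by H\"older using the induction hypothesis and Lemma~\ref{EE}. Two small points: (i) your displayed inequality $\prod(1+\|A_i\|)$ is a slip---it should read $\prod\|A_i\|\le\prod(1+\|A_i-I\|)$, consistent with your parenthetical remark $\|A_i\|=1+O(h(\cdots))$; (ii) the paper obtains the crucial leading $1$ not by bounding $\|A_n-I\|$ directly but by working componentwise with $e_n=\|D_rP_n\|+\|D_rQ_n\|$ and invoking the spectral mapping theorem (using the symmetry of $F_1$) to get the uniform bounds $\|(I+\tfrac{h^2}{2}F_1)^{-1}(I-\tfrac{h^2}{2}F_1)\|\le 1+h^2K$ and $\|(I+\tfrac{h^2}{2}F_1)^{-1}\tfrac{h^2}{2}F_1\|\le 1$, which makes the recursion $e_{n+1}\le(1+Ch(1+\|F_1\|+\|F_2\|))e_n$ immediate.
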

\begin{proof}
Since
\eqref{eq43:1} follows from \eqref{eq43:2},  it suffices to prove \eqref{eq43:2}, which is shown by an induction argument.

Let $r_1\in(t_{i_1},t_{{i_1}+1}]$, for $0\le i_1\le N^h-1$. It follows from \eqref{DX} that for any $i_1<n\le N^h$,
\begin{align}\label{DEP}
D_{r_1}P_{n+1}&=\left(I+\frac{h^2}{2}F_1\left(Q_n,Q_{n+1}\right)\right)^{-1}e^{-vh}\left(I-\frac{h^2}{2}F_1\left(Q_n,Q_{n+1}\right)\right)D_{r_1}P_n\\\nonumber
&+\left(I+\frac{h^2}{2}F_1\left(Q_n,Q_{n+1}\right)\right)^{-1}he^{-vh}\left(F_2(Q_n,Q_{n+1})-F_1(Q_n,Q_{n+1})\right)D_{r_1}Q_n\\\nonumber
&+\left(I+\frac{h^2}{2}F_1\left(Q_n,Q_{n+1}\right)\right)^{-1}h^3e^{-vh}F_1(Q_n,Q_{n+1})F_2(Q_n,Q_{n+1})D_{r_1}Q_n,\\
D_{r_1}Q_{n+1}&=\left(I+\frac{h^2}{2}F_1\left(Q_n,Q_{n+1}\right)\right)^{-1}\left(hD_{r_1}P_n+\left(I-\frac{h^2}{2}F_2(Q_n,Q_{n+1})\right)D_{r_1}Q_n\right).\label{DEQ}
\end{align}
By the spectral mapping theorem and the symmetry of $F_1(Q_n,Q_{n+1})$, for any $n=0,\ldots,N^h-1$, we get 

\begin{align}\label{term1}
&\left\|\left(I+\frac{h^2}{2}F_1(Q_n,Q_{n+1})\right)^{-1}\right\|
=\max_{1\le i\le m}\left|\frac{1}{1+\frac{h^2}{2}\lambda_i(F_1(Q_n,Q_{n+1}))}
\right|,\\\label{term2}
&\left\|\left(I+\frac{h^2}{2}F_1(Q_n,Q_{n+1})\right)^{-1}\left(I-\frac{h^2}{2}F_1(Q_n,Q_{n+1})\right)\right\|
=\max_{1\le i\le m}\left|\frac{1-\frac{h^2}{2}\lambda_i(F_1(Q_n,Q_{n+1}))}{1+\frac{h^2}{2}\lambda_i(F_1(Q_n,Q_{n+1}))}\right|,\\\label{term3}
&\left\|\left(I+\frac{h^2}{2}F_1(Q_n,Q_{n+1})\right)^{-1}\frac{h^2}{2}F_1(Q_n,Q_{n+1})\right\|
=\max_{1\le i\le m}\left|\frac{\frac{h^2}{2}\lambda_i(F_1(Q_n,Q_{n+1}))}{1+\frac{h^2}{2}\lambda_i(F_1(Q_n,Q_{n+1}))}\right|.
\end{align}
Next we estimate these three terms separately.  Choosing $h_0\le\sqrt{\frac{2}{K}}$, combined with the fact $\lambda_{min}(F_1(Q_n,Q_{n+1}))\ge\frac{-K}{2}$, it follows that 
$1+\frac{h^2}{2}\lambda_i(F_1(Q_n,Q_{n+1}))
\ge\frac{1}{2},\,\forall\,i=1,\ldots,m.$
Therefore 
\begin{equation}\label{DEF1}
\left\|\left(I+\frac{h^2}{2}F_1(Q_n,Q_{n+1})\right)^{-1}\right\|\le2.
\end{equation}
Notice that  if $\lambda_i(F_1(Q_n,Q_{n+1}))\ge0$, the left hands of \eqref{term2} and \eqref{term3} are dominated by $1$. If 
$\frac{-K}{2}\le\lambda_i(F_1(Q_n,Q_{n+1}))<0$,
the left hands of \eqref{term2} and \eqref{term3} are bounded as 
\begin{align*}
\left\|\left(I+\frac{h^2}{2}F_1(Q_n,Q_{n+1})\right)^{-1}\left(I-\frac{h^2}{2}F_1(Q_n,Q_{n+1})\right)\right\|&\le
\frac{1+\frac{h^2K}{4}}{1-\frac{h^2K}{4}}=1+\frac{\frac{h^2K}{2}}{1-\frac{h^2K}{4}}\le1+h^2K,
\end{align*}
and 
\begin{equation*}
\left\|\left(I+\frac{h^2}{2}F_1(Q_n,Q_{n+1})\right)^{-1}\frac{h^2}{2}F_1(Q_n,Q_{n+1})\right\|
\le
\frac{\frac{h^2K}{4}}{1-\frac{h^2K}{4}}\le\frac{h^2}{2}K\le1.
\end{equation*}
Hence 
\begin{align}\label{DEF2}
&\left\|\left(I+\frac{h^2}{2}F_1(Q_n,Q_{n+1})\right)^{-1}\left(I-\frac{h^2}{2}F_1(Q_n,Q_{n+1})\right)\right\|\le1+h^2K,
\end{align}
and
\begin{align}
\label{DEF3}
&\left\|\left(I+\frac{h^2}{2}F_1(Q_n,Q_{n+1})\right)^{-1}\frac{h^2}{2}F_1(Q_n,Q_{n+1})\right\|\le1.
\end{align}
Furthermore,  \eqref{DEF3} leads to
\begin{align*}
&\left\|\left(I+\frac{h^2}{2}F_1\left(Q_n,Q_{n+1}\right)\right)^{-1}h^3e^{-vh}F_1(Q_n,Q_{n+1})F_2(Q_n,Q_{n+1})D_{r_1}Q_n\right\|\\\nonumber
&\le 2h\left\|\left(I+\frac{h^2}{2}F_1\left(Q_n,Q_{n+1}\right)\right)^{-1}\frac{h^2}{2}F_1(Q_n,Q_{n+1})\right\|\left\|F_2(Q_n,Q_{n+1})\right\|\left\|D_{r_1}Q_n\right\|\\\label{DE2}
&\le 2h\left\|F_2(Q_n,Q_{n+1})\right\|\left\|D_{r_1}Q_n\right\|.
\end{align*}
From \eqref{DEP}-\eqref{DEF3}, it follows that there exists some constant $C=C(K)$ such that
\begin {align*}
&\|D_{r_1}P_{n+1}\|\le(1+Ch^2)\|D_{r_1}P_n\|+Ch\|F_1(Q_n,Q_{n+1})\|\|D_{r_1}Q_n\|+Ch\|F_2(Q_n,Q_{n+1})\|\|D_{r_1}Q_n\|,\\
&\|D_{r_1}Q_{n+1}\|\le(1+Ch^2)\|D_{r_1}Q_n\|+Ch\|D_{r_1}P_n\|.
\end{align*}
Set $e_n=\|D_{r_1}P_n\|+\|D_{r_1}Q_n\|$, then
\begin{equation}\label{DEen}
e_{n+1}\le e_n+Ch\left(1+\left\|F_1(Q_n,Q_{n+1})\right\|+\|F_2(Q_n,Q_{n+1})\|\right)e_n.
\end{equation}
Due to \eqref{DX2} and  $r_1\in(t_{i_1},t_{{i_1}+1}]$, there exists a positive constant $C=C(\sigma)$ such that
$\|D_{r_1}P_{{i_1}+1}\|+\|D_{r_1}Q_{{i_1}+1}\|\le C(\sigma).$
The discrete Gronwall lemma and \eqref{DEen} imply that
\begin{align*}
e_{n}\le C(\sigma)\exp\left(\sum_{j={i_1}}^{n-1}Ch\left(\|F_1(Q_j,Q_{j+1})\|+\|F_2(Q_j,Q_{j+1})\|+1\right)\right),\,\forall\,i_1<n\le N^h.
\end{align*}
From the H\"{o}lder, Jensen and Young inequalities and the fact  $(N^h-{i_1})h\le T$, it 
follows that 
\begin{align*}\nonumber
\mathbb{E}\left[\sup_{{i_1}<n\le N^h}e_n^p\right]&\le C(\sigma,p)\mathbb{E}\left[\exp\left(\sum_{j={i_1}}^{N^h-1}Ch(\|F_1(Q_j,Q_{j+1})\|+\|F_2(Q_j,Q_{j+1})\|+1)\right)\right]\\\nonumber
&\le C(\sigma,p)\frac{1}{N^h-{i_1}}\sum_{j={i_1}}^{N^h-1}\mathbb{E}\Big[\exp\left(CT(\|F_1(Q_j,Q_{j+1})\|+\|F_2(Q_j,Q_{j+1})\|+1)\right)\Big].
\end{align*}
By Assumption \ref{F2} and the definitions of $F_i,\,i=1,2$, we arrive at
\begin{equation*}
\|F_i(Q_j,Q_{j+1})\|\le C(1+\lfloor Q_j\rfloor^{2\vec{l}-\epsilon\mathbbm{1}}+\lfloor Q_{j+1}\rfloor^{2\vec{l}-\epsilon\mathbbm{1}}),\, i=1,2.
\end{equation*} 
Applying the H\"{o}lder inequality and \eqref{EqNI}, for any $i_1\le j\le N^h-1$, we obtain
\begin{align}\label{DEFE}
&\mathbb{E}\left[\exp\left(CT(\|F_1(Q_j,Q_{j+1})\|+\|F_2(Q_j,Q_{j+1})\|+1\right)\right]\\\nonumber
&\le C\sup_{i_1\le j\le N^h-1}\mathbb{E}\left[\exp\left(C(\lfloor Q_j\rfloor^{2\vec{l}-\epsilon\mathbbm{1}}+\lfloor Q_{j+1}\rfloor^{2\vec{l}-\epsilon\mathbbm{1}})\right)\right]\\\nonumber\nonumber
&\le C\sup_{i_1\le j\le N^h}\mathbb{E}\left[\exp\left(C\lfloor Q_j\rfloor^{2\vec{l}-\epsilon\mathbbm{1}}\right)\right]+C\le C.
\end{align}
The above estimates, combined with the fact 
$\|D_{r_1}X_n\|^p\le C\left(p,m,d\right)e_n^p,$
yield
\begin{equation}\label{D1}
\sup\limits_{r_1\in [0,T]}\mathbb{E}\left[\sup\limits_{r_1\le t_n\le T}\|D_{r_1}X_n\|^p\right]\le C,
\end{equation}
which proves the assertion for $\alpha=1$.

\textit{Step 2}: Let $r_2\in(t_{i_2},t_{{i_2}+1}]$ for $0\le i_2\le N^h-1$. Taking the Malliavin derivatives on both sides of \eqref{DEP} and \eqref{DEQ} yields that, for any $i_1\lor i_2<n\le N^h-1$,
\begin{align}\label{DEP2}
&D_{r_2}D_{r_1}P_{n+1}\\\nonumber
&=\left(I+\frac{h^2}{2}F_1\left(Q_n,Q_{n+1}\right)\right)^{-1}e^{-vh}\left(I-\frac{h^2}{2}F_1\left(Q_n,Q_{n+1}\right)\right)D_{r_2}D_{r_1}P_n\\\nonumber
&\quad+\left(I+\frac{h^2}{2}F_1\left(Q_n,Q_{n+1}\right)\right)^{-1}he^{-vh}\left(F_2(Q_n,Q_{n+1})-F_1(Q_n,Q_{n+1})\right)D_{r_2}D_{r_1}Q_n\\\nonumber
&\quad+\left(I+\frac{h^2}{2}F_1\left(Q_n,Q_{n+1}\right)\right)^{-1}h^3e^{-vh}F_1(Q_n,Q_{n+1})F_2(Q_n,Q_{n+1})D_{r_2}D_{r_1}Q_n\\\nonumber
&\quad+D_{r_2}\left[\left(I+\frac{h^2}{2}F_1\left(Q_n,Q_{n+1}\right)\right)^{-1}\right]e^{-vh}\left(I-\frac{h^2}{2}F_1\left(Q_n,Q_{n+1}\right)\right)D_{r_1}P_n\\\nonumber
&\quad+\left(I+\frac{h^2}{2}F_1\left(Q_n,Q_{n+1}\right)\right)^{-1}e^{-vh}D_{r_2}\left[I-\frac{h^2}{2}F_1(Q_n,Q_{n+1})\right]D_{r_1}P_n\\\nonumber
&\quad+D_{r_2}\left[\left(I+\frac{h^2}{2}F_1\left(Q_n,Q_{n+1}\right)\right)^{-1}\right]he^{-vh}\left(F_2(Q_n,Q_{n+1})-F_1(Q_n,Q_{n+1})\right)D_{r_1}Q_n\\\nonumber
&\quad+\left(I+\frac{h^2}{2}F_1\left(Q_n,Q_{n+1}\right)\right)^{-1}he^{-vh}D_{r_2}\left[F_2(Q_n,Q_{n+1})-F_1(Q_n,Q_{n+1})\right]D_{r_1}Q_n\\\nonumber
&\quad+D_{r_2}\left[\left(I+\frac{h^2}{2}F_1\left(Q_n,Q_{n+1}\right)\right)^{-1}\right]h^3e^{-vh}F_1(Q_n,Q_{n+1})F_2(Q_n,Q_{n+1})D_{r_1}Q_n\\\nonumber
&\quad+\left(I+\frac{h^2}{2}F_1\left(Q_n,Q_{n+1}\right)\right)^{-1}h^3e^{-vh}D_{r_2}\left[F_1(Q_n,Q_{n+1})\right]F_2(Q_n,Q_{n+1})D_{r_1}Q_n\\\nonumber
&\quad+\left(I+\frac{h^2}{2}F_1\left(Q_n,Q_{n+1}\right)\right)^{-1}h^3e^{-vh}F_1(Q_n,Q_{n+1})D_{r_2}\left[F_2(Q_n,Q_{n+1})\right]D_{r_1}Q_n\\\nonumber
&=:J_{1n}^1+J_{2n}^1+J_{3n}^1+J_{4n}^1+J_{5n}^1+J_{6n}^1+J_{7n}^1+J_{8n}^1+J_{9n}^1+J_{10n}^1,
\end{align}
and
\begin{align}\label{DEQ2}
&D_{r_2}D_{r_1}Q_{n+1}\\\nonumber
&=\left(I+\frac{h^2}{2}F_1\left(Q_n,Q_{n+1}\right)\right)^{-1}\left[hD_{r_2}D_{r_1}P_n+(I-\frac{h^2}{2}F_2(Q_n,Q_{n+1}))D_{r_2}D_{r_1}Q_n\right]\\\nonumber
&\quad+hD_{r_2}\left[\left(I+\frac{h^2}{2}F_1\left(Q_n,Q_{n+1}\right)\right)^{-1}\right]D_{r_1}P_n\\\nonumber
&\quad+D_{r_2}\left[\left(I+\frac{h^2}{2}F_1\left(Q_n,Q_{n+1}\right)\right)^{-1}\right]\left(I-\frac{h^2}{2}F_2(Q_n,Q_{n+1})\right)D_{r_1}Q_n\\\nonumber
&\quad+\left(I+\frac{h^2}{2}F_1\left(Q_n,Q_{n+1}\right)\right)^{-1}D_{r_2}\left[I-\frac{h^2}{2}F_2(Q_n,Q_{n+1})\right]D_{r_1}Q_n\\\nonumber
&=:J_{1n}^2+J_{2n}^2+J_{3n}^2+J_{4n}^2.
\end{align}

We now claim that 
for $\iota=1,\, \kappa=4,\ldots,10$ and $ \iota=2,\,\kappa=2,3,4$,  it holds that 
\begin{equation}\label{JKI}
\mathbb{E}[\|h^{-1}J^\iota_{\kappa n}\|^q]\le C(q),
\end{equation}
 for any $q\in[1,\infty)$, $i_1\lor i_2<n\le N^h-1$.
In fact, by the chain rule, we have 
\begin{align*}
&D_{r_2}\left[\left(I+\frac{h^2}{2}F_1\left(Q_n,Q_{n+1}\right)\right)^{-1}\right]\\
&=-\frac{h^2}{2}\left(I+\frac{h^2}{2}F_1\left(Q_n,Q_{n+1}\right)\right)^{-1}D_{r_2}\left[F_1(Q_n,Q_{n+1})\right]\left(I+\frac{h^2}{2}F_1\left(Q_n,Q_{n+1}\right)\right)^{-1},\\
&D_{r_2}\left[I-\frac{h^2}{2}F_1(Q_n,Q_{n+1})\right]=-\frac{h^2}{2}D_{r_2}\left[F_1(Q_n,Q_{n+1})\right],\\
&D_{r_2}\left[I-\frac{h^2}{2}F_2(Q_n,Q_{n+1})\right]=-\frac{h^2}{2}D_{r_2}\left[F_2(Q_n,Q_{n+1})\right].
\end{align*}
From \eqref{DEF1}, the following estimation
\begin{align*}
\left\|I-\frac{h^2}{2}F_2(Q_n,Q_{n+1})\right\|\le C+\frac{h^2}{2}\left\|F_2(Q_n,Q_{n+1})\right\|,
\end{align*}
and the fact that $L^{\infty-}(\Omega)$ is an algebra, it remains to show that
\begin{align}\label{DR2F}
\|D_{r_2}F_i(Q_n,Q_{n+1})\|, \,\|F_i(Q_n,Q_{n+1})\|\in L^{\infty-}(\Omega),\, i=1,2.
\end{align} 
Combining 
\begin{eqnarray*}
D_{r_2}F_i(Q_n,Q_{n+1})=\nabla F_i(Q_n,Q_{n+1})^\top\left[\begin{array}{c}D_{r_2}Q_n \\D_{r_2}Q_{n+1}\end{array}\right], \,i=1,2,
\end{eqnarray*}
\eqref{D1} and Lemma \ref{EE}, we get \eqref{DR2F}, 
which implies that \eqref{JKI} holds.

Define $\mathrm{E}_{n}:=\|D_{r_2}D_{r_1}P_n\|+\|D_{r_2}D_{r_1}Q_n\|$. From \eqref{DEP2}, \eqref{DEQ2} and  \eqref{DEF1}-\eqref{DEF3}, it follows that
\begin{equation*}\label{DR2e}
\mathrm{E}_{n+1}\le \mathrm{E}_n+Ch(1+\|F_1(Q_n,Q_{n+1})\|+\|F_2(Q_n,Q_{n+1})\|)\mathrm{E}_n+hJ_n,
\end{equation*}
with $J_n=h^{-1}\sum \|J_{\kappa n}^\iota\|=\sum \|h^{-1}J_{\kappa n}^\iota\|$, where the sums are extended to the set $\{\iota=1,\, \kappa=4,\ldots,10;\, \iota=2,\, \kappa=2,3,4\}$. It follows from \eqref{JKI} that
\begin{equation}\label{JKI1}
\mathbb{E}[\|J_n\|^q]\le C(q),\,\forall\,i_1\lor i_2<n\le N^h-1.
\end{equation}
According to
$r_1\in(t_{i_1},t_{{i_1}+1}],\, r_2\in(t_{i_2},t_{{i_2}+1}]$, as well as \eqref{DX2}, we have
$\mathrm{E}_{(i_1\lor i_2)+1}=0$.
Since $D_{r_1,r_2}$ is a symmetric operator with respect to $r_1,\,r_2$, without loss of generality, we suppose that $i_1\le i_2$.
By using the discrete Gronwall lemma and then taking $p$th power on both sides, we obtain that for any $i_2<n\le N^h-1$,
\begin{align*}
&\mathrm{E}_n^p\le\exp\left(\sum_{j=i_2+1}^{n-1}Ch(\|F_1(Q_j,Q_{j+1})\|+\|F_2(Q_j,Q_{j+1})\|+1)\right)\left(\sum_{j=i_2+1}^{n-1}hJ_j\right)^p\\
&\le\exp\left(\sum_{j=i_2+1}^{n-1}Ch(\|F_1(Q_j,Q_{j+1})\|+\|F_2(Q_j,Q_{j+1})\|+1)\right)h^p(n-1-i_2)^{p-1}\left(\sum_{j=i_2+1}^{n-1}J_j^p\right).
\end{align*}
Subsequent proof is based on \eqref{DEFE} and \eqref{JKI1}. For $\alpha\ge3$, the desired result is achieved by a recursive argument. 
\end{proof}
\begin{rems}\label{Fk0}
Let $F\in C_p^k$ for some $k\ge2$, $t\in(0,T]$ and $n=1,\ldots,N^h$. From the proofs of Lemmas \ref{NS} and \ref{NDI}, for any $\alpha\le k-2$ and $p\ge1$, we have $X(t),\,X_n\in\mathbb{D}^{\alpha,p}(\mathbb{R}^{2m})$.

\end{rems}
Based on Lemma \ref{NDI}, we are now in a position to prove the existence of the density function of $X_n$, $n=2,\ldots,N^h$. We remark that $X_1$ is degenerate in Malliavin sense since $\gamma_1$ is not invertible.

\begin{tho}\label{MRL}
Let Assumptions \ref{F2}-\ref{F4} hold. Then for any $n\in\{2,\ldots,N^h\}$, the law of $X_n$ is absolutely continuous with respect to the Lebesgue measure on $\mathbb R^{2m}$.
\end{tho}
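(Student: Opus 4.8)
The plan is to apply the standard Malliavin-calculus criterion for absolute continuity (see e.g. \cite[Theorem 2.1.2]{DN06}): since Lemma \ref{NDI} already gives $X_n\in\mathbb{D}^{1,p}(\mathbb{R}^{2m})\subset\mathbb{D}^{1,2}(\mathbb{R}^{2m})$ for every $p\ge1$, it suffices to prove that the Malliavin covariance matrix $\gamma_n$ is invertible almost surely for each $n\in\{2,\ldots,N^h\}$. Thus the whole argument reduces to a positivity statement for $\det\gamma_n$.

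First I would exploit the recursion \eqref{MX}, namely $\gamma_{n+1}=A_n\gamma_nA_n^\top+\frac{1-e^{-2vh}}{2v}\bigl[\begin{smallmatrix}\sigma\sigma^\top&0\\0&0\end{smallmatrix}\bigr]$, together with the observation that $A_n$ is invertible a.s. The invertibility of $A_n$ follows from its explicit factorization: the left factor is block upper triangular with invertible diagonal blocks $I$ and $(I+\tfrac{h^2}{2}F_1)^{-1}$ (invertible for $h<2/\sqrt{K}$, already established), while the right factor has determinant equal (up to a nonzero scalar power of $e^{-vh}$ and $h$) to $\det$ of a block matrix that one checks is nonzero for small $h$ — indeed for $h_0$ small its determinant is bounded away from $0$ uniformly. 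Hence $A_n\gamma_nA_n^\top$ is positive semidefinite with the same rank as $\gamma_n$, and adding the nonnegative term built from $\sigma\sigma^\top$ can only increase the rank. Since by Assumption \ref{F4} at least $m$ of the vectors $\sigma_1,\ldots,\sigma_d$ are linearly independent, $\sigma\sigma^\top\in\mathbb{R}^{m\times m}$ is strictly positive definite, so the additive term has rank exactly $m$ (concentrated in the $P$-block).

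The key step is then to track the rank through the recursion. Starting from $n=1$: from \eqref{DX2} one computes $\gamma_1=\frac{1-e^{-2vh}}{2v}\bigl[\begin{smallmatrix}\sigma\sigma^\top&0\\0&0\end{smallmatrix}\bigr]$, which has rank exactly $m$ (the $Q$-block vanishes), confirming $X_1$ is degenerate. Passing to $\gamma_2=A_1\gamma_1A_1^\top+\frac{1-e^{-2vh}}{2v}\bigl[\begin{smallmatrix}\sigma\sigma^\top&0\\0&0\end{smallmatrix}\bigr]$, the first summand $A_1\gamma_1A_1^\top$ still has rank $m$, but now its range need no longer sit inside the $P$-block: the off-diagonal and lower-diagonal blocks of $A_1$ (which contain the factors $hI$ and $(I+\tfrac{h^2}{2}F_1)^{-1}$) mix the $P$-directions into the $Q$-directions. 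Concretely one shows that the range of $A_1\gamma_1A_1^\top$ is $A_1(\mathbb{R}^m\times\{0\})$ restricted appropriately, and because the lower-left block of $A_1$ equals $(I+\tfrac{h^2}{2}F_1)^{-1}(hI)$, which is invertible, the $Q$-components of these $m$ vectors are linearly independent. Consequently $\operatorname{range}(A_1\gamma_1A_1^\top)$ is an $m$-dimensional subspace that projects isomorphically onto the $Q$-block, hence is complementary to the $P$-block $\mathbb{R}^m\times\{0\}=\operatorname{range}\bigl(\bigl[\begin{smallmatrix}\sigma\sigma^\top&0\\0&0\end{smallmatrix}\bigr]\bigr)$. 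Therefore $\gamma_2$, being the sum of two positive semidefinite matrices with complementary $m$-dimensional ranges, has rank $2m$, i.e. is invertible a.s. For $n\ge3$, $\gamma_n\ge A_{n-1}\cdots A_2\,\gamma_2\,A_2^\top\cdots A_{n-1}^\top$ in the positive-semidefinite order, and since each $A_j$ is invertible a.s., the right-hand side is a.s. invertible; hence so is $\gamma_n$.

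The main obstacle I expect is the rank/complementarity bookkeeping at the step $n=1\to n=2$: one must verify carefully that the $m$-dimensional range created by propagating the $P$-block noise through $A_1$ genuinely escapes the $P$-block and becomes transversal to the freshly injected noise, rather than (partially) overlapping it. This amounts to checking that a certain $2m\times 2m$ matrix formed from the blocks of $A_1$ and $\sigma\sigma^\top$ is nonsingular, which follows from the invertibility of the lower-left block $h(I+\tfrac{h^2}{2}F_1)^{-1}$ of $A_1$ and the positive definiteness of $\sigma\sigma^\top$; the smallness of $h_0$ is used only to guarantee invertibility of the various $(I+\tfrac{h^2}{2}F_1)$-type factors and of the right factor of $A_n$. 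Everything else is a routine application of \cite[Theorem 2.1.2]{DN06} once $X_n\in\mathbb{D}^{1,2}$ and a.s. invertibility of $\gamma_n$ are in hand.
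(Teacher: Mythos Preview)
Your overall strategy---invoking \cite[Theorem 2.1.2]{DN06}, citing Lemma~\ref{NDI} for regularity, and reducing to a.s.\ invertibility of $\gamma_n$ via the recursion \eqref{MX}---matches the paper exactly, and your treatment of the base case $\gamma_2$ is correct and essentially equivalent to the paper's: both hinge on the fact that the lower-left block of $A_1$, namely $h\bigl(I+\tfrac{h^2}{2}F_1\bigr)^{-1}$, is invertible, so that noise injected in the $P$-direction propagates into the $Q$-direction after one step.

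The gap is in your inductive step for $n\ge3$. You write $\gamma_n\ge A_{n-1}\cdots A_2\,\gamma_2\,A_2^\top\cdots A_{n-1}^\top$ and conclude invertibility from the a.s.\ invertibility of each $A_j$. But your justification that the right factor of $A_j$ has determinant ``bounded away from $0$ uniformly for small $h_0$'' is incorrect: a Schur-complement computation gives $\det A_j=e^{-mvh}\det\bigl(I+\tfrac{h^2}{2}F_1\bigr)^{-1}\det\bigl(I-\tfrac{3h^2}{2}F_2(Q_j,Q_{j+1})\bigr)$, and since $\nabla^2F$ (hence $F_2$) is unbounded under Assumption~\ref{F2}, the last factor is \emph{not} uniformly bounded away from zero for any fixed $h>0$. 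Establishing that $A_j$ is a.s.\ invertible would require a separate absolute-continuity argument for $(Q_j,Q_{j+1})$, which is circular here.

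The paper sidesteps this entirely by running the induction one step at a time via the quadratic form: assuming $\gamma_n$ invertible a.s., for $y=(0,y_2)^\top$ with $\|y_2\|=1$ one has $y^\top\gamma_{n+1}y=z^\top\gamma_n z$ where $z=A_n^\top y$ has first block $z_1=h\bigl(I+\tfrac{h^2}{2}F_1(Q_n,Q_{n+1})\bigr)^{-1}y_2\neq0$. This uses only the invertibility of $I+\tfrac{h^2}{2}F_1$ (guaranteed for $h<2/\sqrt{K}$), never the full invertibility of $A_n$. Equivalently, in your rank language: $\ker\gamma_{n+1}=(\ker A_n^\top)\cap(\{0\}\times\mathbb{R}^m)$ when $\gamma_n$ is invertible, and this intersection is $\{0\}$ precisely because the lower-left block of $A_n$ is invertible---so your argument is easily repaired by inducting step-by-step rather than chaining all the $A_j$ together.
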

\begin{proof}
In view of \cite[Theorem 2.1.2]{DN06} and Lemma \ref{NDI}, it remains to prove that for $n=2,\ldots,N^h$, the Malliavin covariance matrix $\gamma_{n}$ of $X_n$, is invertible a.s. Since $\gamma_{n}=\int_0^{t_n}D_rX_n(D_rX_n)^\top\,\ud r$ is a nonnegative definite matrix, it suffices to show that $\lambda_{min}(\gamma_{n+1})>0$, a.s. $\forall\,n=1,\ldots,N^h-1$. Notice that the symmetry of $\gamma_{n}$ yields that
\begin{equation*}\label{Gam}
\lambda_{min}(\gamma_{n+1})=\min_{\substack{y=(y_1^\top,y_2^\top)^\top\in\mathbb{R}^{2m}\\\|y\|=1}}y^\top\gamma_{n+1}y.
\end{equation*} 
Since $\sigma\sigma^\top$ is invertible, we have $\frac{1-e^{-2vh}}{2v}\|y_1^\top\sigma\|^2>0$ as long as $y_1\neq0$. It suffices to show that for $y=(y_1^\top,y_2^\top)^\top$ with $\|y_2\|=1$, it holds that $y^\top\gamma_{n+1}y>0$, a.s. Now we prove $\lambda_{min}(\gamma_{n+1})>0$ by induction on $n$. 

\textit{Step 1:}
Let $n=1$. By \eqref{MX}, we have
\begin{equation*}
\begin{split}
y^\top\gamma_2y
=&\frac{1-e^{-2vh}}{2v}\left\|{y_1^\top e^{-vh}\left(I-\frac{h^2}{2}F_1(Q_1,Q_2)\right)\left(I+\frac{h^2}{2}F_1(Q_1,Q_2)\right)^{-1}\sigma}\right.\\
&\left.{\qquad\qquad\quad +hy_2^\top\left(I+\frac{h^2}{2}F_1(Q_1,Q_2)\right)^{-1}\sigma}\right\|^2+\frac{1-e^{-2vh}}{2v}\|y_1^\top\sigma\|^2.
\end{split}
\end{equation*}
Substituting $y_1=0, \|y_2\|=1$ into the above equation and using the invertibility of $\sigma\sigma^\top$ and $I+\frac{h^2}{2}F_1(Q_1,Q_2)$ lead to
\begin{align*}
y^\top\gamma_2y=\frac{1-e^{-2vh}}{2v}\left\|hy_2^\top\left(I+\frac{h^2}{2}F_1(Q_1,Q_2)\right)^{-1}\sigma\right\|^2>0.
\end{align*}

\textit{Step 2}:
Assume that $\lambda_{min}(\gamma_{n+1})>0$ holds for $n-1$.
Substituting $y_1=0, \|y_2\|=1$ and \eqref{MX} into the expression of $y^\top\gamma_{n+1}y$ gives
\begin{align*}
y^\top\gamma_{n+1}y=\left[\begin{array}{c}y_1^\top,y_2^\top\end{array}\right]A_n\gamma_n^\top A_n^\top\left[\begin{array}{c}y_1\\y_2\end{array}\right]+\frac{1-e^{-2vh}}{2v}\|y_1^\top\sigma\|^2=\left[\begin{array}{c}z_1^\top,z_2^\top\end{array}\right]\gamma_n^\top\left[\begin{array}{c}z_1\\z_2\end{array}\right],
\end{align*}
where
\begin{align*}
&z_1=hy_2^\top\left(I+\frac{h^2}{2}F_1\left(Q_n,Q_{n+1}\right)\right)^{-1},\\
&z_2=y_2^\top\left(I+\frac{h^2}{2}F_1\left(Q_n,Q_{n+1}\right)\right)^{-1}\left(I-\frac{h^2}{2}F_2(Q_n,Q_{n+1})\right).
\end{align*}
Then the desired result $y^\top\gamma_{n+1}y>0,$ a.s. follows from $z_1\neq0,$ and the induction assumption that $\gamma_n$ is invertible a.s., which completes the proof.
\end{proof}

\section{Strong convergence}\label{S5}
In this section, we present the optimal strong convergence rate of the splitting AVF scheme \eqref{split sol} under Assumption \ref{F2}. Before that, we recall the mild form of the exact solution of equation \eqref{SDE1}, for any $0\le s<t\le T,$
\begin{equation}\label{Exact sol}
\left\{
\begin{split}
&P(t)=e^{-v(t-s)}P(s)-\int_s^te^{-v(t-u)}\nabla F(Q(u))\,\ud u+\sum_{k=1}^d\int_s^te^{-v(t-u)}\sigma_k\,\ud W_{u}^{k},\\
&Q(t)=Q(s)+\int_s^t P(u)\,\ud u.
\end{split}
\right.
\end{equation}

According to the exponential integrability properties of both exact and numerical solutions, a priori  strong error estimate between $X(t_n)$ and $X_n$ is established in the following Lemma. 

\begin{lem}\label{lem2}
Let Assumption \ref{F2} hold, $h_0$ be a sufficiently small positive constant and $p\ge1$. Then there exists some positive constant $C=C(p,T,\sigma,X(0))$ such that for any $h\in(0,h_0]$,
\begin{equation*}
\sup_{n\le N^h}\|X_n-X(t_n)\|_{L^{2p}(\Omega;\mathbb{R}^{2m})}\le Ch^{1/2}.
\end{equation*}
\end{lem}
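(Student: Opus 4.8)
The plan is to compare the mild form \eqref{Exact sol} of the exact solution with the one-step recursion satisfied by the numerical solution, and to close a Gronwall-type estimate after introducing a stopping (or rather a localization in the event) that exploits the exponential integrability properties. First I would rewrite one step of scheme \eqref{split sol} in a mild-type form analogous to \eqref{Exact sol}: eliminating $\bar P_{n+1},\bar Q_{n+1}$ gives
\begin{align*}
&P_{n+1}=e^{-vh}P_n-he^{-vh}\int_0^1\nabla F\bigl(Q_n+\tau(Q_{n+1}-Q_n)\bigr)\,\ud\tau+\sum_{k=1}^d\int_{t_n}^{t_{n+1}}e^{-v(t_{n+1}-t)}\sigma_k\,\ud W_t^k,\\
&Q_{n+1}=Q_n+\tfrac{h}{2}(P_{n+1}+P_n)+\text{(error from the stochastic increment in }\bar P\text{ vs }P).
\end{align*}
Subtracting the corresponding identity for $X(t_{n+1})$ from \eqref{Exact sol} on $[t_n,t_{n+1}]$, the noise terms cancel exactly (additive noise, and the exact solution of the Ornstein--Uhlenbeck part is used in the scheme). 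The remaining discrepancy splits into a linear-in-error part, controlled by $e^{-vh}$ and the Lipschitz-type behaviour of $\nabla F$ on sublevel sets, and a local truncation error coming from (a) replacing $\int_{t_n}^{t_{n+1}}e^{-v(t_{n+1}-u)}\nabla F(Q(u))\,\ud u$ by the quadrature $he^{-vh}\int_0^1\nabla F(Q_n+\tau(Q_{n+1}-Q_n))\,\ud\tau$, and (b) the midpoint rule error in the $Q$-equation.

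The key step is to bound these local errors in $L^{2p}(\Omega)$ by $Ch^{3/2}$ (so that summing $N^h$ of them and dividing by nothing yields the global rate $h^{1/2}$; actually one expects per-step error $\mathcal O(h^{3/2})$ in $L^{2p}$ after the Gronwall iteration — though to obtain exactly $h^{1/2}$ here it suffices to have a per-step error of order $h$ that, after the discrete Gronwall summation, telescopes to $h^{1/2}$; I would in fact aim for per-step $L^{2p}$-error of order $h^{3/2}$ leading to global $h$, and then conclude the weaker $h^{1/2}$ claimed here a fortiori). Concretely, writing $G(u):=e^{-v(t_{n+1}-u)}\nabla F(Q(u))$, a Taylor expansion in $u$ around $t_n$ produces first-order terms involving $\nabla^2F(Q(u))P(u)$ and $v\,\nabla F(Q(u))$, whose $L^{2p}(\Omega)$-norms are finite by Lemma \ref{MB} together with Assumption \ref{F2} and — crucially — the exponential integrability estimate \eqref{EqEI}, exactly as in the derivation of \eqref{EEB}–\eqref{M=1}: the superlinear growth of $\nabla^2F$ is absorbed because $\exp(C\lfloor Q\rfloor^{2\vec l-\epsilon\mathbbm 1})$ has finite expectation uniformly in $t$. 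The same exponential integrability, now in the numerical version \eqref{EqNI} and Lemma \ref{EE}, controls the increments $Q_{n+1}-Q_n=hP_n+\mathcal O(h^2\lfloor Q_n\rfloor^{2\vec l-\epsilon})$ appearing inside the AVF quadrature, so that $\|\nabla F(Q_n+\tau(Q_{n+1}-Q_n))-\nabla F(Q(t_n))\|_{L^{2p}}\le Ch^{?}$ with the exponential moments taming the polynomial growth.

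Having both the linear part (with a factor $1+Ch$) and a local error term bounded in $L^{2p}(\Omega)$, I would set $e_n:=\|X_n-X(t_n)\|_{L^{2p}(\Omega;\mathbb R^{2m})}$ and derive $e_{n+1}\le(1+Ch)e_n+Ch^{3/2}$ — or, if one is content with the stated rate, $e_{n+1}\le(1+Ch)e_n+Ch\cdot\eta_n$ with $\eta_n$ a sequence whose $\ell^2$-in-$n$ sum is $\mathcal O(h^{-1/2})$ — and invoke the discrete Gronwall lemma together with $N^h h=T$ to obtain $\sup_{n\le N^h}e_n\le C h^{1/2}$. The main obstacle is precisely the non-global monotonicity: at no point may one use a global Lipschitz bound on $\nabla F$ or a one-sided Lipschitz inequality, so every estimate on the drift differences must be phrased as (polynomial factor)$\times$(error), and the polynomial factor must be shown to lie in $L^r(\Omega)$ for all $r$ uniformly in $n$ and $h$ — which is exactly what Propositions \ref{EEI} and \ref{EqE} (exponential integrability) and Lemmas \ref{MB} and \ref{EE} (moment bounds) are designed to provide. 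Managing the interplay between the implicitness of $Q_{n+1}$ (which appears on both sides through the AVF quadrature) and these moment estimates — i.e. first establishing an a priori $L^r$-bound on $Q_{n+1}-Q_n$ before it can be used — is the delicate bookkeeping point.
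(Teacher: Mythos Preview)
Your decomposition of the one-step error is essentially the same as the paper's, and your identification of exponential integrability as the tool that tames the polynomial prefactors is correct. But the way you propose to close the argument has a genuine gap.

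You want to derive a recursion of the form $e_{n+1}\le(1+Ch)e_n+Ch^{3/2}$ with $e_n:=\|X_n-X(t_n)\|_{L^{2p}}$, i.e.\ you take the $L^{2p}$-norm \emph{before} iterating Gronwall. This cannot be done here. The pathwise one-step inequality reads
\[
\mathcal E_{n+1}\le \mathcal E_n+(h+G_n)\,\mathcal E_n+K_n,
\]
where $G_n=\int_{t_n}^{t_{n+1}}C\bigl(1+\lfloor Q(t)\rfloor^{2\vec l-\epsilon\mathbbm 1}+\lfloor Q_n\rfloor^{2\vec l-\epsilon\mathbbm 1}+\lfloor Q_{n+1}\rfloor^{2\vec l-\epsilon\mathbbm 1}\bigr)\,\ud t$ is the \emph{random} local Lipschitz factor. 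If you now take $\|\cdot\|_{L^{2p}}$ and try to estimate $\|G_n\mathcal E_n\|_{L^{2p}}$ by H\"older, you are forced into $\|G_n\|_{L^r}\|\mathcal E_n\|_{L^s}$ with $s>2p$: the recursion does not close in a fixed $L^{2p}$ space. Having $G_n\in L^r$ for all $r$ (which is what moment bounds give) is not enough---you would need $G_n\in L^\infty$, which fails for superlinear $\nabla^2F$.

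The paper's remedy is to reverse the order of operations: apply the discrete Gronwall lemma \emph{pathwise} to obtain
\[
\mathcal E_{n+1}^p\le n^{p-1}\Bigl(\sum_{j=0}^n K_j^p\Bigr)\exp\Bigl(p\sum_{j=0}^n(h+G_j)\Bigr),
\]
and only then take expectations, separating the two factors by H\"older. The crucial point is that the exponential integrability estimates \eqref{EqEI} and \eqref{EqNI} (together with the Jensen/Young manipulation in \eqref{EEB}) bound $\bigl\|\exp\bigl(\sum_j pG_j\bigr)\bigr\|_{L^2(\Omega)}$ \emph{uniformly in $h$}---this is where exponential (not merely polynomial) moments are indispensable. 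The $K_j$'s, dominated by the stochastic double integral $\eta_j$ of size $h^{3/2}$ in $L^{2p}$, then produce the global rate $h^{1/2}$ after summation. Your proposal invokes the right estimates but applies them at the wrong stage; the pathwise-Gronwall-then-expectation structure is the missing idea.
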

\begin{proof}
From \eqref{split sol} and \eqref{Exact sol}, it follows that 
\begin{align}\label{P}
P_{n+1}-P(t_{n+1})=&e^{-vh}(P_n-P(t_n))+\int_{t_n}^{t_{n+1}}[-e^{-vh}+e^{-v(t_{n+1}-t)}]\nabla F(Q(t))\,\ud t\\\nonumber
&+e^{-vh}\int_{t_n}^{t_{n+1}}\int_0^1R_1\,\ud \tau\,\ud t,\\\label{Q}
Q_{n+1}-Q(t_{n+1})=&Q_n-Q(t_n)+h(P_n-P(t_n))+R_2-\sum_{k=1}^d \int_{t_n}^{t_{n+1}}\int_{t_n}^t e^{-v(t-s)}\sigma_k\,\ud W_{s}^{k}\,\ud t,
\end{align}
where
\begin{align*}
R_1:&=\nabla F(Q(t))-\nabla F(Q_n+\tau(Q_{n+1}-Q_n)),\\
R_2:&=\int_{t_n}^{t_{n+1}}\int_{t_n}^{t}e^{-v(t-s)}\nabla F(Q(s))\,\ud s\,\ud t+\left(h-\frac{1-e^{-vh}}{v}\right)P(t_n)\\
&-\frac{h^2}{2}\int_0^1\nabla F(Q_n+\tau(Q_{n+1}-Q_n))\,\ud\tau.
\end{align*}
The mean value theorem yields that 
\begin{align*}
R_1&=\int_0^1 \nabla^2 F\left(\theta Q(t)+\left(1-\theta\right)\left(Q_n+\tau\left(Q_{n+1}-Q_n\right)\right)\right)(Q(t)-Q_n-\tau(Q_{n+1}-Q_n))\,\ud \theta\\
&=\int_0^1 \nabla^2 F\left(\theta Q(t)+\left(1-\theta\right)\left(Q_n+\tau\left(Q_{n+1}-Q_n\right)\right)\right)(Q(t_n)-Q_n)\,\ud \theta\\
+&\int_0^1 \nabla^2 F\left(\theta Q(t)+\left(1-\theta\right)\left(Q_n+\tau\left(Q_{n+1}-Q_n\right)\right)\right)\left(\int_{t_n}^{t}  P(s)\,\ud s-\frac{\tau h}{2}(P_n+\bar P_{n+1})\right)\,\ud \theta,
\end{align*}
where $\theta\in (0,1)$ depends on $Q(t)$ and $Q_n$, $Q_{n+1}$. 
The inequalities $1-e^{-vh}\le Ch$ and $e^{-vh}-1+vh\le Ch^2$, $\forall\,h\le1$ with $C$ independent of $h$ and Assumption \ref{F2} imply that for any $\theta\in(0,1)$, $\tau\in(0,1)$, $t\in[t_n,t_{n+1}]$ and $n=0,\ldots,N^h-1$, 
\begin{align*}
\|\nabla F(Q(t))\|&\le C+\|Q(t)\|^{2|\,\vec{l}\,|_{\infty}},\\
\|\nabla F(Q_n+\tau(Q_{n+1}-Q_n))\|&\le C+\|Q(t)\|^{2|\,\vec{l}\,|_{\infty}}+\|Q_{n+1}\|^{2|\,\vec{l}\,|_{\infty}},\\
\|\nabla^2 F(\theta Q(t)+(1-\theta)(Q_n+\tau(Q_{n+1}-Q_n)))\|&\le C(1+\lfloor Q_n\rfloor^{2{\vec{l}-\epsilon\mathbbm{1}}}+\lfloor Q_{n+1}\rfloor^{2{\vec{l}-\epsilon\mathbbm{1}}}+\lfloor Q(t)\rfloor^{2{\vec{l}-\epsilon\mathbbm{1}}}).
\end{align*}
Applying the Young inequality and the triangle inequality, we get
\begin{align}\label{PN}
&\|P_{n+1}-P(t_{n+1})\|\le \|P_n-P(t_n)\|+G_n\|Q(t_n)-Q_n\|+Ch^2K_{1n},\\\label{QN}
&\|Q_{n+1}-Q(t_{n+1})\|\le \|Q_n-Q(t_n)\|+h\|P(t_n)-P_n\|+Ch^2K_{2n}+\|\eta_n\|,
\end{align} 
where 
\begin{align*}
K_{1n}&=\left(1+\sup_{t\in[0,T]}\|Q(t)\|^{2|\,\vec{l}\,|_{\infty}}+\|Q_n\|^{2|\,\vec{l}\,|_{\infty}}+\|Q_{n+1}\|^{2|\,\vec{l}\,|_{\infty}}\right)\left(\sup_{t\in[0,T]}\|P(t)\|+\|P_n\|+\|\bar P_{n+1}\|+1\right)\\
K_{2n}&=1+\sup_{t\in[0,T]}\|Q(t)\|^{2|\,\vec{l}\,|_{\infty}}+\|Q_n\|^{2|\,\vec{l}\,|_{\infty}}+\|Q_{n+1}\|^{2|\,\vec{l}\,|_{\infty}}+\sup_{t\in[0,T]}\|P(t)\|,\\
\eta_n&=\sum_{k=1}^d \int_{t_n}^{t_{n+1}}\int_{t_n}^t \sigma_k\,\ud W_{s}^{k}\,\ud t,
\end{align*}
and 
\begin{equation}\label{Gn}
G_n=\int_{t_n}^{t_{n+1}} C\left(1+\lfloor Q(t)\rfloor^{2\vec{l}-\epsilon\mathbbm{1}}+\lfloor 
Q_n\rfloor^{2\vec{l}-\epsilon\mathbbm{1}}+\lfloor Q_{n+1}\rfloor^{2\vec{l}-\epsilon\mathbbm{1}}\right)\,\ud t.\\
\end{equation}
Define $\mathcal{E}_{n+1}:=\|P_{n+1}-P(t_{n+1})\|+\|Q_{n+1}-Q(t_{n+1})\|$. The estimates \eqref{PN} and \eqref{QN} lead to 
\begin{align*}\label{SE1}
&\mathcal{E}_{n+1} \le\mathcal{E}_{n}+(h+G_n)\mathcal {E}_{n}+K_n,
\end{align*}
where 
$K_n=Ch^2K_{1n}+Ch^2K_{2n}+\|\eta_n\|\le Ch^2K_{1n}+\|\eta_n\|.$
Using the discrete Gronwall lemma and $\mathcal{E}_0=0$, we obtain
\begin {equation*}
\mathcal{E}_{n+1} \le \left(\sum_{j=0}^{n} K_j\right)\exp\left(\sum_{j=0}^{n} (h+G_j)\right),\,\forall\,n=0,\ldots,N^h-1.
\end {equation*}
Taking $p$th power on both sides and applying the H\"{o}lder inequality, we have
\begin {align}\label{SE2}
\mathcal{E}_{n+1}^p &\le \left(\sum_{j=0}^{n} K_j\right)^p\exp\left(\sum_{j=0}^{n} p(h+G_j)\right)\\\nonumber
&\le n^{p-1}\left(\sum_{j=0}^{n} K_j^p\right)\exp\left(pT\right)\exp\left(\sum_{j=0}^{n} pG_j\right),\,\forall\, n=0,\ldots,N^h-1.
\end {align}
The H\"{o}lder inequality, together with Lemmas \ref{MB}, \ref{EE} implies that
\begin {equation}\label{K12}
\left\|K_{1j}^{p}\right\|_{L^2(\Omega)} \le C,\,\left\|K_{2j}^{p}\right\|_{L^2(\Omega)} \le C,\, \forall \,j=0,\ldots,N^h-1.
\end {equation}
The stochastic Fubini theorem and the H\"{o}lder inequality lead to
\begin {align}\label{eta}
\left\|\|\eta_j\|^{p}\right\|_{L^2(\Omega)}^2&=\mathbb{E}\left[\left\|\sum_{k=1}^d \int_{t_j}^{t_{j+1}}\int_{t_j}^t \sigma_k\,\ud W_{s}^{k}\,\ud t\right\|^{2p}\right]
=\mathbb{E}\left[\left\|\sum_{k=1}^d \int_{t_j}^{t_{j+1}}(t_{j+1}-s) \sigma_k\,\ud W_{s}^{k}\right\|^{2p}\right]\\\nonumber
&\le C\sum_{k=1}^d \mathbb{E}\left[\left\|\int_{t_j}^{t_{j+1}}(t_{j+1}-s) \sigma_k\,\ud W_{s}^{k}\right\|^{2p}\right]\le Ch^{3p},\,\forall\,j=0,\ldots,N^h-1.
\end {align}
Combining the above estimates together, we obtain that for $n=0,\ldots,N^h-1$,
\begin {align*}\label{KP}
\left\|\sum_{j=0}^{n} K_j^p\right\|_{L^2(\Omega)}&\le\sum_{j=0}^{n} \left\|K_j^p\right\|_{L^2(\Omega)}
\le \sum_{j=0}^{n}\left(Ch^{2p}\|K_{1j}^{p}\|_{L^2(\Omega)}+C\left\|\|\eta_j\|^{p}\right\|_{L^2(\Omega)}\right)
\le Ch^{\frac{3p}{2}-1}.
\end {align*} 
Further, \eqref{DEFE} and the Jensen inequality imply that $\mathbb{E}\left[\exp\left(\sum_{j=1}^{n+1} Ch\lfloor Q_j\rfloor^{2\vec{l}-\epsilon\mathbbm{1}}\right)\right]\le C.$ Consequently, according to the H\"{o}lder inequality, we have
\begin{eqnarray}\label{SE3}
&&\left\|\exp\left(\sum_{j=0}^{n}pG_j\right)\right\|_{L^2(\Omega)}\\\nonumber
&&\le\exp(CT)\left\|\exp\left(\int_0^T C\lfloor Q(t)\rfloor^{2\vec{l}-\epsilon\mathbbm{1}}\,\ud t\right)\right\|_{L^4(\Omega)}\left\|\exp\left(\sum_{j=1}^{n+1} 2Ch\lfloor Q_j\rfloor^{2\vec{l}-\epsilon\mathbbm{1}}\right)\right\|_{L^4(\Omega)}\\\nonumber
&&\le C.
\end{eqnarray}
From the estimates \eqref{SE2}-\eqref{SE3}, we deduce that
$\mathbb{E}\left[\mathcal{E}_{n}^p\right]\le Ch^{\frac{p}{2}},\,\forall\,n=1,\ldots,N^h,$ which together with the fact that $\|X_n-X(t_n)\|^{p}\le C\mathcal{E}_n^p$ completes the proof.
\end{proof}

With a slight modified procedure, we get the following strong convergence result.
\begin{cor}
Let Assumption \ref{F2} hold, $h_0$ be a sufficiently small positive constant and $p\ge1$. Then there exists some positive constant $C=C(X(0),p,T,\sigma)$ such that for any $h\in(0,h_0]$,
\begin{equation*}
\left\|\sup_{n\le N^h}\|X_n-X(t_n)\|\right\|_{L^{2p}(\Omega)}\le Ch^{1/2}.
\end{equation*}
\end{cor}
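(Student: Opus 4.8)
The plan is to rerun the proof of Lemma~\ref{lem2}, the only new input being the observation that the pathwise Gronwall bound obtained there is already nondecreasing in $n$, so that the supremum over $n$ can be absorbed \emph{before} passing to the $L^{2p}(\Omega)$-norm, at essentially no extra cost.

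First I would recall, from the proof of Lemma~\ref{lem2}, the pathwise recursion for $\mathcal{E}_n:=\|P_n-P(t_n)\|+\|Q_n-Q(t_n)\|$, namely
\begin{equation*}
\mathcal{E}_{n+1}\le\mathcal{E}_n+(h+G_n)\mathcal{E}_n+K_n,\qquad n=0,\ldots,N^h-1,
\end{equation*}
with $G_n$ as in \eqref{Gn}, $K_n=Ch^2K_{1n}+Ch^2K_{2n}+\|\eta_n\|$, and $\mathcal{E}_0=0$ since $X_0=X(0)$ is a deterministic datum. The discrete Gronwall lemma gives, for each such $n$,
\begin{equation*}
\mathcal{E}_{n+1}\le\Big(\sum_{j=0}^{n}K_j\Big)\exp\Big(\sum_{j=0}^{n}(h+G_j)\Big).
\end{equation*}
Because $K_j\ge0$ and $h+G_j\ge0$ for every $j$, the right-hand side is nondecreasing in $n$; evaluating it at $n=N^h-1$ and using $\mathcal{E}_0=0$ I obtain the single, $n$-independent, nonnegative random variable
\begin{equation*}
\sup_{0\le n\le N^h}\mathcal{E}_n\le\Big(\sum_{j=0}^{N^h-1}K_j\Big)\exp\Big(\sum_{j=0}^{N^h-1}(h+G_j)\Big)=:\Xi_h\qquad\text{a.s.},
\end{equation*}
whence $\sup_{n\le N^h}\|X_n-X(t_n)\|\le C\,\Xi_h$ a.s., by $\|X_n-X(t_n)\|\le C\mathcal{E}_n$.

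It then remains to bound $\|\Xi_h\|_{L^{2p}(\Omega)}$, and this is exactly the computation already performed inside the proof of Lemma~\ref{lem2}, now with no residual $n$-dependence. By the H\"older inequality,
\begin{equation*}
\mathbb{E}\big[\Xi_h^{2p}\big]\le\Big\|\Big(\sum_{j=0}^{N^h-1}K_j\Big)^{2p}\Big\|_{L^2(\Omega)}\,\Big\|\exp\Big(\sum_{j=0}^{N^h-1}2p(h+G_j)\Big)\Big\|_{L^2(\Omega)}.
\end{equation*}
For the first factor, the triangle inequality in $L^{4p}(\Omega)$ together with $\|K_{1j}\|_{L^{4p}(\Omega)}+\|K_{2j}\|_{L^{4p}(\Omega)}\le C$ (Lemmas~\ref{MB} and \ref{EE}) and $\big\|\,\|\eta_j\|\,\big\|_{L^{4p}(\Omega)}\le Ch^{3/2}$ (stochastic Fubini theorem and Burkholder--Davis--Gundy inequality, as in \eqref{eta}) yields $\big\|\sum_{j=0}^{N^h-1}K_j\big\|_{L^{4p}(\Omega)}\le N^h\cdot Ch^{3/2}=Ch^{1/2}$, hence $\big\|(\sum_jK_j)^{2p}\big\|_{L^2(\Omega)}\le Ch^{p}$. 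For the second factor, $\sum_{j=0}^{N^h-1}(h+G_j)=C+\int_0^TC\lfloor Q(t)\rfloor^{2\vec{l}-\epsilon\mathbbm{1}}\,\ud t+\sum_{j=0}^{N^h-1}Ch\big(\lfloor Q_j\rfloor^{2\vec{l}-\epsilon\mathbbm{1}}+\lfloor Q_{j+1}\rfloor^{2\vec{l}-\epsilon\mathbbm{1}}\big)$, so the exponential integrability bounds \eqref{EEB} and \eqref{DEFE}, combined with the Jensen and H\"older inequalities exactly as in \eqref{SE3}, give $\big\|\exp(\sum_j2p(h+G_j))\big\|_{L^2(\Omega)}\le C$. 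Multiplying the two factors gives $\mathbb{E}[\Xi_h^{2p}]\le Ch^{p}$, and therefore $\big\|\sup_{n\le N^h}\|X_n-X(t_n)\|\big\|_{L^{2p}(\Omega)}\le C\,\|\Xi_h\|_{L^{2p}(\Omega)}\le Ch^{1/2}$, as claimed. I do not expect any genuine obstacle here: the only point requiring attention is the monotonicity of the Gronwall bound in $n$, which is precisely what makes the exchange of $\sup_n$ and $\mathbb{E}[\cdot]$ cost nothing; once $\sup_n\mathcal{E}_n$ has been replaced by the $n$-free quantity $\Xi_h$, the argument reduces verbatim to the moment and exponential-moment estimates already established for Lemma~\ref{lem2}.
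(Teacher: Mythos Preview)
Your proposal is correct and follows essentially the same approach as the paper: both take the supremum over $n$ of the pathwise Gronwall bound (exploiting its monotonicity in $n$) before passing to moments, and then reuse the moment and exponential-moment estimates already established in the proof of Lemma~\ref{lem2}. The only cosmetic difference is that the paper applies the power-mean inequality $(\sum_j K_j)^{2p}\le (N^h)^{2p-1}\sum_j K_j^{2p}$ before taking expectations, whereas you use the triangle inequality in $L^{4p}(\Omega)$ on $\sum_j K_j$ directly; both routes give the same $Ch^p$ bound on $\mathbb{E}[\Xi_h^{2p}]$.
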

\begin{proof}
Taking supreme over $n\le N^h-1$ and square on both sides of \eqref{SE2} yields
\begin {align*}
\mathbb{E}\left[\sup_{n\le N^h-1}\mathcal{E}_{n+1}^{2p}\right]&\le \left(\sum_{j=0}^{N^h-1} K_j\right)^{2p}\exp\left(\sum_{j=0}^{N^h-1} 2p(h+G_j)\right)\\\nonumber
&\le (N^h)^{2p-1}\left(\sum_{j=0}^{N^h-1} K_j^{2p}\right)\exp\left(2pT\right)\exp\left(\sum_{j=0}^{N^h-1} 2pG_j\right).
\end {align*}
Similar to the proof of Lemma \ref{lem2}, we complete the proof.
\end{proof}
The optimal strong convergence order of the numerical approximation which only use  the increments of the Wiener process is known to be $1$ for SDEs with Lipschitz and regular coefficients driven by additive noises (see e.g. \cite{CC80}). However,  for SDEs with non-globally monotone coefficients driven by additive noises, it seems that there exists a order barrier to achieve optimal strong rate (see e.g. \cite{HJ14}). 
In this part, we overcome the order barrier of the proposed scheme \eqref{split sol} by using the Malliavin integration by parts formula and Lemma  \ref{lem2}. To this end, the following a priori estimate is needed to the proof of Theorem \ref{SC1}. 
\begin{lem}\label{DH}
Let Assumption \ref{F2} hold, $h_0$ be a sufficiently small positive constant and $p\ge1$.  For any positive constant $K_1$, there exists some positive constant $C=C(p,K_1)>0$ such that for any $r\in[0,T],\,k\in\{1,...,d\},$ $0\le j< n\le N^h,$ $h\in(0,h_0]$,
\begin{equation*}
\mathbb{E} \left[\left(D_r^k\left(\prod_{i=j+1}^{n}\left(1+K_1(h+G_i)\right)\right)\right)^{2p}\right]<C,
\end{equation*}
where $G_i$ is defined by \eqref{Gn}.
\end{lem}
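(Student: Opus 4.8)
The plan is to expand the Malliavin derivative of the finite product by the Leibniz rule and then control each resulting summand by combining the exponential integrability of Propositions \ref{EEI} and \ref{EqE} with the Malliavin regularity estimates already established. Each $G_i$ is built from $Q(t)$, $t\in[t_i,t_{i+1}]$, and from $Q_i,Q_{i+1}$, all of which lie in $\mathbb{D}^{1,\infty}$ (the numerical values by \eqref{eq43:1} of Lemma \ref{NDI}, the exact solution by the moment bounds \eqref{DK} in the proof of Lemma \ref{NS}). Since the map $y\mapsto\lfloor y\rfloor^{2\vec{l}-\epsilon\mathbbm{1}}=\sum_{i=1}^m|y_i|^{2l_i-\epsilon}$ is $C^1$ with gradient of polynomial growth of degree strictly below $2|\,\vec{l}\,|_{\infty}$ (we take $\epsilon$ and $h_0$ small enough that $2l_i-\epsilon>1$ for all $i$), each factor $1+K_1(h+G_i)$ belongs to $\mathbb{D}^{1,\infty}$, and the Leibniz rule gives
\begin{equation*}
D_r^k\!\left(\prod_{i=j+1}^{n}\bigl(1+K_1(h+G_i)\bigr)\right)=K_1\sum_{l=j+1}^{n}\left(\prod_{\substack{i=j+1\\ i\ne l}}^{n}\bigl(1+K_1(h+G_i)\bigr)\right)D_r^kG_l .
\end{equation*}
Moreover $G_i\ge 0$, so $\prod_{i\ne l}\bigl(1+K_1(h+G_i)\bigr)\le e^{K_1T}\exp\bigl(K_1\sum_{i=j+1}^{n}G_i\bigr)$.

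Next I would estimate the two factors of each summand separately. For the product factor, I would split $\sum_{i=j+1}^{n}G_i$ into $\int_0^T C\bigl(1+\lfloor Q(t)\rfloor^{2\vec{l}-\epsilon\mathbbm{1}}\bigr)\,\ud t$ plus $h\sum_i C\bigl(\lfloor Q_i\rfloor^{2\vec{l}-\epsilon\mathbbm{1}}+\lfloor Q_{i+1}\rfloor^{2\vec{l}-\epsilon\mathbbm{1}}\bigr)$ and then, using $\lfloor Q\rfloor^{2\vec{l}-\epsilon\mathbbm{1}}\le C\,U(X)^{(|\,\vec{l}\,|_{\infty}-\epsilon/2)/|\,\vec{l}\,|_{\infty}}+C$ together with the Jensen and H\"older inequalities and Propositions \ref{EEI}, \ref{EqE} (exactly as in the derivations of \eqref{EEB} and \eqref{SE3}), obtain for every $q\ge1$
\begin{equation*}
\sup_{h\in(0,h_0]}\;\Bigl\|\exp\Bigl(K_1{\textstyle\sum_{i=j+1}^{n}}G_i\Bigr)\Bigr\|_{L^q(\Omega)}\le C(q,K_1).
\end{equation*}
For the second factor, the chain rule gives $D_r^kG_l=\int_{t_l}^{t_{l+1}}C\bigl(\langle g(Q(t)),D_r^kQ(t)\rangle+\langle g(Q_l),D_r^kQ_l\rangle+\langle g(Q_{l+1}),D_r^kQ_{l+1}\rangle\bigr)\,\ud t$, where $g:=\nabla\bigl(\lfloor\cdot\rfloor^{2\vec{l}-\epsilon\mathbbm{1}}\bigr)$ has polynomial growth of degree $<2|\,\vec{l}\,|_{\infty}$; since $[t_l,t_{l+1}]$ has length $h$, Minkowski's integral inequality and the H\"older inequality, combined with Lemma \ref{MB}, Lemma \ref{EE}, the uniform-in-$h$ estimate \eqref{eq43:2} and the moment bound \eqref{DK} for $D_rX(t)$, yield $\|D_r^kG_l\|_{L^q(\Omega)}\le C(q)h$, uniformly in $l$, $r$, $k$ and $h\in(0,h_0]$.

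Putting the pieces together by Minkowski's inequality in $L^{2p}(\Omega)$ followed by the H\"older inequality with a pair of conjugate exponents gives
\begin{equation*}
\Bigl\|D_r^k\!\Bigl({\textstyle\prod_{i=j+1}^{n}}\bigl(1+K_1(h+G_i)\bigr)\Bigr)\Bigr\|_{L^{2p}(\Omega)}\le K_1\sum_{l=j+1}^{n}C(p,K_1)h\le K_1\,C(p,K_1)\,N^hh\le K_1\,C(p,K_1)\,T ,
\end{equation*}
a bound independent of $h$, $j$, $n$, $r$ and $k$; raising it to the power $2p$ would then complete the proof. The main obstacle is exactly this uniformity in the stepsize: the factor $h$ generated by the length of $[t_l,t_{l+1}]$ in $D_r^kG_l$ must compensate the $N^h\sim T/h$ summands produced by the Leibniz rule, while the exponential moments of $\sum_iG_i$ must remain bounded as $h\to0$. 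Both rest on the exponential integrability of the exact and numerical solutions and on the strict subcriticality of the exponent $2\vec{l}-\epsilon\mathbbm{1}$ relative to the Lyapunov functional $U$, which is what makes the passage from bounds of the form $\exp\bigl(c\,U^{(|\,\vec{l}\,|_{\infty}-\epsilon/2)/|\,\vec{l}\,|_{\infty}}\bigr)$ to bounds of the form $\exp(c'\,U)$ cost only a fixed multiplicative constant.
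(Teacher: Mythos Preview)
Your proposal is correct and follows essentially the same route as the paper: expand by the Leibniz rule, dominate the remaining product by $\exp\bigl(K_1\sum_i(h+G_i)\bigr)$ and control its moments via \eqref{EEB}/\eqref{SE3}, show each $\|D_r^kG_l\|_{L^q(\Omega)}\le Ch$ from the interval length and the moment and Malliavin bounds of Lemmas \ref{MB}, \ref{EE}, \ref{NDI} and \eqref{DK}, and finally trade the $N^h$ summands against the factor $h$ to get a bound independent of $h$. The only cosmetic difference is that the paper passes the $2p$-th power inside the sum via the discrete H\"older inequality $\bigl(\sum_ja_j\bigr)^{2p}\le(n-j)^{2p-1}\sum_ja_j^{2p}$ before taking expectations, whereas you apply Minkowski's inequality in $L^{2p}(\Omega)$ first; both lead to the same final estimate.
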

\begin{proof}
Since  $X_n$ and $X(t)$ are differentiable in Malliavin sense, and $G_i$ is a functional of $Q(t),\,Q_i,\,Q_{i+1}$, the Malliavin derivative of $G_i$ exists (see e.g. \cite[Chapter 1]{DN06}).
By the chain rule, the H\"{o}lder inequality and the estimation \eqref{SE3}, we obtain
\begin{align}\label{DH1}
&\mathbb{E} \left[\left(D_r^k\left(\prod_{i=j+1}^{n}\left(1+K_1(h+G_i)\right)\right)\right)^{2p}\right]\\\nonumber
&=\mathbb{E}\left[\left(\sum_{i=j+1}^{n}\prod_{\substack{\kappa=j+1\\ \kappa\neq i}}^{n}\left(1+K_1(h+G_\kappa)\right)K_1D_r^kG_i\right)^{2p}\right]\\\nonumber
&\le(n-j)^{2p-1}\mathbb{E}\left[\sum_{i=j+1}^{n}\left(\prod_{\substack{\kappa=j+1\\ \kappa\neq i}}^{n}\left(1+K_1(h+G_\kappa)\right)K_1D_r^kG_i\right)^{2p}\right]\\\nonumber
&\le C(n-j)^{2p-1}\sum_{i=j+1}^{n}\mathbb{E}\left[\exp\left(2pK_1\sum_{\kappa=j+1}^{n}(h+G_\kappa)\right)\left(D_r^kG_i\right)^{2p}\right]\\\label{DH1}
&\le C(n-j)^{2p-1}\sum_{i=j+1}^{n}\left(\mathbb{E}\left[\left(D_r^kG_i\right)^{2q}\right]\right)^{\frac{p}{q}},\nonumber
\end{align}
where $q>p$. The chain rule, the H\"{o}lder inequality and the Fubini theorem yield that 
\begin{align*}
&\mathbb{E}\left[\left(D_r^kG_i\right)^{2q}\right]\\
&=\mathbb{E}\left[\left(\int_{t_i}^{t_{i+1}} CD_r^k\left(1+\lfloor Q(t)\rfloor^{2\vec{l}-\epsilon\mathbbm{1}}+\lfloor Q_i\rfloor^{2\vec{l}-\epsilon\mathbbm{1}}+\lfloor Q_{i+1}\rfloor^{2\vec{l}-\epsilon\mathbbm{1}}\right)\,\ud t\right)^{2q}\right]\\
&\le C h^{2q-1}\mathbb{E}\left[\int_{t_i}^{t_{i+1}} \left|D_r^k\left(1+\lfloor Q(t)\rfloor^{2\vec{l}-\epsilon\mathbbm{1}}+\lfloor Q_i\rfloor^{2\vec{l}-\epsilon\mathbbm{1}}+\lfloor Q_{i+1}\rfloor^{2\vec{l}-\epsilon\mathbbm{1}}\right)\right|^{2q}\,\ud t\right]\\
&= C h^{2q-1}\int_{t_i}^{t_{i+1}}\mathbb{E}\left[ \left|D_r^k\left(1+\lfloor Q(t)\rfloor^{2\vec{l}-\epsilon\mathbbm{1}}+\lfloor Q_i\rfloor^{2\vec{l}-\epsilon\mathbbm{1}}+\lfloor Q_{i+1}\rfloor^{2\vec{l}-\epsilon\mathbbm{1}}\right)\right|^{2q}\right]\,\ud t\\
&\le Ch^{2q-1}\int_{t_i}^{t_{i+1}}\mathbb{E}\left[\left|D_r^k\left(\lfloor Q(t)\rfloor^{2\vec{l}-\epsilon\mathbbm{1}}\right)\right|^{2q}+\left|D_r^k\left(\lfloor Q_i\rfloor^{2\vec{l}-\epsilon\mathbbm{1}}\right)\right|^{2q}+\left|D_r^k\left(\lfloor Q_{i+1}\rfloor^{2\vec{l}-\epsilon\mathbbm{1}}\right)\right|^{2q}\right]\,\ud t.
\end{align*}
Furthermore, for any $r,\,t\in[0,T],\,k=1,\ldots,d$, by \eqref{DK} and Lemma \ref{MB}, we have
\begin{align*}
&\mathbb{E}\left[\left|D_r^k\left(\lfloor Q(t)\rfloor^{2\vec{l}-\epsilon\mathbbm{1}}\right)\right|^{2q}\right]\\
&=\mathbb{E}\left[\left|\sum_{\beta=1}^m(2l_\beta-\epsilon)Q_\beta(t)^{2l_\beta-\epsilon-1}D_r^kQ_\beta(t)\right|^{2q}\right]\\
&\le C\sum_{\beta=1}^m\mathbb{E}\left|Q_\beta(t)^{2l_\beta-\epsilon-1}D_r^kQ_\beta(t)\right|^{2q}\\
&\le C\sum_{\beta=1}^m\left(\mathbb{E}\left[\|Q(t)\|^{4q(2|\,\vec{l}\,|_{\infty}-\epsilon-1)}\right]\right)^{\frac{1}{2}}\left(\mathbb{E}\left|D_r^kQ_\beta(t)\right|^{4q}\right)^{\frac{1}{2}}\\
&\le C.
\end{align*}
Likewise, by Lemma \ref{NDI} and  Lemma \ref{EE}, for any $r\in[0,T],\,i=1,..,N^h,\,k=1,\ldots,d$, we have
$\mathbb{E}\left[\left|D_r^k\left(\lfloor Q_i\rfloor^{2\vec{l}-\epsilon\mathbbm{1}}\right)\right|^{2q}\right]\le C.$
Combining the above estimates together, we get 
\begin{equation}\label{DH3}
\mathbb{E}\left[(D_r^kG_i)^{2q}\right]\le Ch^{2q}.
\end{equation}
Combining \eqref{DH1} and \eqref{DH3}, we complete the proof. 
\end{proof}

Based on Lemmas  \ref{lem2} and \ref{DH}, now we prove the main result of this section. 

\textit{Proof of Theorem \ref{SC1}}
We begin with establishing a refined estimate of the error between $Q(t_{n+1})$ and $Q_{n+1}$.
By \eqref{Q}, $\|R_2\|\le Ch^2K_{2n}$ and choosing $h_0\le1$, we obtain by the Young inequality that
\begin{align*}
&\|Q_{n+1}-Q(t_{n+1})\|^2\\
&=\|Q_n-Q(t_n)\|^2+h^2\|P_n-P(t_n)\|^2+\|R_2\|^2+\|\eta_n\|^2+2h(Q_n-Q(t_n))^\top(P_n-P(t_n))\\
&\quad+2(Q_n-Q(t_n))^\top R_2-2(Q_n-Q(t_n))^\top\eta_n+2h(P_n-P(t_n))^\top R_2-2h(P_n-P(t_n))^\top\eta_n\\
&\quad-2R_2^\top\eta_n\\
&\le\|Q_n-Q(t_n)\|^2+h^2\|P_n-P(t_n)\|^2+Ch^4K_{2n}^2+\|\eta_n\|^2+2h(Q_n-Q(t_n))^\top(P_n-P(t_n))
\\
&\quad+h\|Q_n-Q(t_n)\|^2+Ch^3K_{2n}^2-2(Q_n-Q(t_n))^\top\eta_n+h\|P_n-P(t_n)\|^2+Ch^5K_{2n}^2\\
&\quad+h\|P_n-P(t_n)\|^2+h\|\eta_n\|^2+\|\eta_n\|^2+Ch^4K_{2n}^2\\
&\le {\|Q_n-Q(t_n)\|^2+Ch\mathcal{E}_n^2+C\|\eta_n\|^2+Ch^3K_{2n}^2}-2(Q_n-Q(t_n))^\top\eta_n.
\end{align*}
Further,
\begin{align}\label{Q2p}
&\|Q_{n+1}-Q(t_{n+1})\|^{2p}\\\nonumber
&=\|Q_n-Q(t_n)\|^{2p}+p\|Q_n-Q(t_n)\|^{2p-2}\left({Ch\mathcal{E}_n^2+C\|\eta_n\|^2+Ch^3K_{2n}^2}\right)\\\nonumber
&-2p\|Q_n-Q(t_n)\|^{2p-2}(Q_n-Q(t_n))^\top\eta_n\\\nonumber
&+\sum_{\kappa=2}^pC\|Q_n-Q(t_n)\|^{2p-2\kappa}\left(Ch\mathcal{E}_n^2+{C\|\eta_n\|^2+Ch^3K_{2n}^2}-2(Q_n-Q(t_n))^\top\eta_n\right)^\kappa\\\nonumber
&=:\|Q_n-Q(t_n)\|^{2p}+I_1+I_2+I_3.
\end{align}
From the Young inequality and the H\"{o}lder inequality, it follows that
\begin{align*}
I_1&\le ph\|Q_n-Q\left(t_n\right)\|^{2p}+ph^{1-p}\left(Ch\mathcal{E}_n^2+C\|\eta_n\|^2+Ch^3K_{2n}^2\right)^p\\
&\le ph\|Q_n-Q\left(t_n\right)\|^{2p}+ph^{1-p}\left(Ch^p\mathcal{E}_n^{2p}+C\|\eta_n\|^{2p}+Ch^{3p}K_{2n}^{2p}\right)\\
&\le Ch\mathcal{E}_n^{2p}+Ch^{1-p}\|\eta_n\|^{2p}+Ch^{2p+1}K_{2n}^{2p}
\end{align*}
and
\begin{align*}
I_3&\le\sum_{\kappa=2}^pC\|Q_n-Q(t_n)\|^{2p-2\kappa}\left(Ch\mathcal{E}_n^2+C\|\eta_n\|^2+Ch^3K_{2n}^2\right)^\kappa\\
&\quad+\sum_{\kappa=2}^pC\|Q_n-Q(t_n)\|^{2p-2\kappa}\left(\|Q_n-Q(t_n)\|\|\eta_n\|\right)^\kappa\\
&\le C\sum_{\kappa=2}^p\|Q_n-Q(t_n)\|^{2p-2\kappa}\left(h^\kappa\mathcal{E}_n^{2\kappa}+\|\eta_n\|^{2\kappa}+h^{3\kappa}K_{2n}^{2\kappa}\right)+C\sum_{\kappa=2}^p\|Q_n-Q(t_n)\|^{2p-\kappa}\|\eta_n\|^\kappa\\
&\le Ch^2\mathcal{E}_n^{2p}+C\sum_{\kappa=2}^p\left(h\|Q_n-Q(t_n)\|^{2p}+h^{1-p/\kappa}\left(\|\eta_n\|^{2\kappa}+h^{3\kappa}K_{2n}^{2\kappa}\right)^{\frac{p}{\kappa}}\right)\\
&\quad+\sum_{\kappa=2}^p\left(h\|Q_n-Q(t_n)\|^{2p}+h^{1-p}\|\eta_n\|^{2p}\right)\\
&\le Ch\mathcal{E}_n^{2p}+Ch^{1-p}\|\eta_n\|^{2p}+Ch^{\frac{5}{2}p+1}K_{2n}^{2p}.
\end{align*}
Substituting the above two inequalities into \eqref{Q2p} gives
\begin{align}\label{Q2p1}
\|Q_{n+1}-Q(t_{n+1})\|^{2p}&\le\|Q_n-Q(t_n)\|^{2p}+Ch\mathcal{E}_n^{2p}+Ch^{2p+1}K_{2n}^{2p}+Ch^{1-p}\|\eta_n\|^{2p}\\\nonumber
&\quad-C\|Q_n-Q(t_n)\|^{2p-2}(Q_n-Q(t_n))^\top\eta_n,\nonumber
\end{align}
where we used $h\le1$. Now we turn to estimating $P_{n+1}-P(t_{n+1})$. Taking $2p$th power on both sides of \eqref{PN}, we get
\begin{align*}
\|P_{n+1}-P(t_{n+1})\|^{2p}&\le\left(\|P_n-P(t_n)\|+G_n\|Q(t_n)-Q_n\|+Ch^2K_{1n}\right)^{2p}\\
&=\|P_n-P(t_n)\|^{2p}+2p\|P_n-P(t_n)\|^{2p-1}\left(G_n\|Q(t_n)-Q_n\|+Ch^2K_{1n}\right)\\
&\quad+\sum_{\kappa=2}^{2p}C\|P_n-P(t_n)\|^{2p-\kappa}\left(G_n\|Q_n-Q(t_n)\|+Ch^2K_{1n}\right)^\kappa.
\end{align*}
According to the Young inequality, for $2\le\kappa\le 2p$,
\begin{align*}
&\|P_n-P(t_n)\|^{2p-\kappa}\left(G_n\|Q_n-Q(t_n)\|+Ch^2K_{1n}\right)^\kappa\\
&=\|P_n-P(t_n)\|^{2p-\kappa}\left(G_n\|Q_n-Q(t_n)\|+Ch^2K_{1n}\right)^{\frac{2p-\kappa}{2p-1}}\\
&\quad\left(G_n\|Q_n-Q(t_n)\|+Ch^2K_{1n}\right)^{\kappa-\frac{2p-\kappa}{2p-1}}\\
&\le\frac{2p-\kappa}{2p-1}\|P_n-P(t_n)\|^{2p-1}\left(G_n\|Q(t_n)-Q_n\|+Ch^2K_{1n}\right)\\
&\quad+\frac{\kappa-1}{2p-1}\left(G_n\|Q_n-Q(t_n)\|+Ch^2K_{1n}\right)^{2p}.
\end{align*}
Combining the above two estimates and the H\"{o}lder inequality, we obtain
\begin{align}\label{P2p}
\|P_{n+1}-P(t_{n+1})\|^{2p}
&\le\|P_n-P(t_n)\|^{2p}+CG_n^{2p}\|Q_n-Q(t_n)\|^{2p}+Ch^{4p}K_{1n}^{2p}\\\nonumber
&\quad+C\|P_n-P(t_n)\|^{2p-1}\left(G_n\|Q_n-Q(t_n)\|+Ch^2K_{1n}\right)\\\nonumber
&\le\|P_n-P(t_n)\|^{2p}+C(h+G_n)\mathcal{E}_n^{2p}+Ch^{1+2p}K_{1n}^{2p}+CG_n^{2p}\mathcal{E}_n^{2p}.
\end{align}
Define $\mathcal{S}_{n+1}:=\left(\|P_{n+1}-P(t_{n+1})\|^{2p}+\|Q_{n+1}-Q(t_{n+1})\|^{2p}\right)^{\frac{1}{2p}}$. Note that $\mathcal{E}_n^{2p}\le C\mathcal{S}_n^{2p}$. Then it follows from \eqref{Q2p1} and \eqref{P2p} that 
$\mathcal{S}_{n+1}^{2p}\le\mathcal{S}_n^{2p}+C(h+G_n)\mathcal{S}_n^{2p}+T_n,$
where $T_n=T_{1n}+T_{2n}$ with
\begin{align*}
&T_{1n}=Ch^{2p+1}K_{1n}^{2p}+Ch^{1-p}\|\eta_n\|^{2p}+Ch^{2p+1}K_{2n}^{2p}+CG_n^{2p}\mathcal{S}_n^{2p},\\
&T_{2n}=C\|Q_n-Q(t_n)\|^{2p-2}(Q(t_n)-Q_n)^\top\eta_n.
\end{align*} 
Notice that $\mathcal{S}_{0}=0$.
The discrete Gronwall lemma (see e.g. \cite[Lemma 1.4.2]{QV94}) yields that 
\begin{align}\label{En+1}
\mathcal{S}_{n+1}^{2p}&\le\sum_{j=0}^{n}\left(\prod_{i=j+1}^{n}(1+C(h+G_i))\right)T_{1j}+\sum_{j=0}^{n}\left(\prod_{i=j+1}^{n}(1+C(h+G_i))\right)T_{2j}\\\nonumber
&\le\sum_{j=0}^{n}\exp\left({\sum\limits_{i=j+1}^{n}C(h+G_i)}\right)T_{1j}+\sum_{j=0}^{n}\left(\prod_{i=j+1}^{n}\left(1+C(h+G_i)\right)\right)T_{2j},
\end{align}
with the conventions $\prod_{i=n+1}^{n}(1+C(h+G_i))=1$ and $\sum_{i=n+1}^{n}C(h+G_i)=0$.
Now, we estimate the above two sums separately. For the first summand, Lemma  \ref{lem2} and estimations \eqref{K12}-\eqref{SE3} yield that,
\begin{align*}
\mathbb{E}\left[\exp\left({\sum\limits_{i=j+1}^{n}C(h+G_i)}\right)T_{1j}\right]\le C\left(\mathbb{E}\left[\exp\left({\sum\limits_{i=j+1}^{n}CG_i}\right)\right]\right)^{\frac{1}{2}}\left(\mathbb{E}\left[T_{1j}^2\right]\right)^{\frac{1}{2}}\le Ch^{2p+1},
\end{align*}
whence
\begin{equation}\label{T1}
\mathbb{E}\left[\sum_{j=0}^{n}\exp\left(\sum_{i=j+1}^{n}C(h+G_i)\right)T_{1j}\right]\le Ch^{2p}.
\end{equation}
Now we estimate the second summand in \eqref{En+1}.
By the definition of $T_{2j}$ and using the Malliavin integration by parts formula (see e.g. \cite[Lemma 1.2.1]{DN06}), we obtain
\begin{align*}
&\mathbb{E}\left[\left(\prod_{i=j+1}^{n}(1+C(h+G_i))\right)T_{2j}\right]\\
&=C\mathbb{E}\left[\left(\prod_{i=j+1}^{n}(1+C(h+G_i))\right)\|Q_j-Q(t_j)\|^{2p-2} (Q(t_j)-Q_j)^\top\left(\sum_{k=1}^d \int_{t_j}^{t_{j+1}}\int_{t_j}^t \sigma_k\,\ud W_{s}^{k}\,\ud t\right)\right]\\
&=C\sum_{k=1}^d\int_{t_j}^{t_{j+1}}\mathbb{E}\left[\left(\prod_{i=j+1}^{n}(1+C(h+G_i))\right)\|Q_j-Q(t_j)\|^{2p-2}(Q(t_j)-Q_j)^\top\sigma_k\int_{t_j}^t\,\ud W_{s}^{k}\right]\ud t\\
&=C\sum_{k=1}^d\int_{t_j}^{t_{j+1}}\mathbb{E} \Bigg[\int_{t_j}^t D_r^k\Bigg[\left(\prod_{i=j+1}^{n}\left(1+C(h+G_i)\right)\right)\|Q_j-Q(t_j)\|^{2p-2}(Q(t_j)-Q_j)^\top\sigma_k\Bigg]\,\ud r\Bigg]\ud t.
\end{align*}
The chain rule leads to
\begin{align*}
&\mathbb{E} \left[\int_{t_j}^t D_r^k\left[\left(\prod_{i=j+1}^{n}(1+C(h+G_i))\right)\|Q_j-Q(t_j)\|^{2p-2}(Q(t_j)-Q_j)^\top\sigma_k\right]\,\ud r\right]\\
&=\mathbb{E} \left[\int_{t_j}^t D_r^k\left(\prod_{i=j+1}^{n}\left(1+C(h+G_i)\right)\right)\|Q_j-Q(t_j)\|^{2p-2}(Q(t_j)-Q_j)^\top\sigma_k\,\ud r\right]\\
&\quad+\mathbb{E} \left[\int_{t_j}^t \left(\prod_{i=j+1}^{n}\left(1+C(h+G_i)\right)\right)D_r^k\left(\|Q_j-Q(t_j)\|^{2p-2}\left(Q(t_j)-Q_j\right)^\top\sigma_k\right)\,\ud r\right]\\
&=\mathbb{E} \left[\int_{t_j}^t D_r^k\left(\prod_{i=j+1}^{n}\left(1+C(h+G_i)\right)\right)\|Q_j-Q(t_j)\|^{2p-2}\left(Q(t_j)-Q_j\right)^\top\sigma_k\,\ud r\right],
\end{align*}
where we used the fact that $D_r^k\left(\|Q_j-Q(t_j)\|^{2p-2}\left(Q(t_j)-Q_j\right)^\top\sigma_k\right)$ is zero almost everywhere in $(t_j,t]\times\Omega$ since $Q_j-Q(t_j)$ is $\mathscr{F}_{t_j}$-measurable (see e.g. \cite[Corollary 1.2.1]{DN06}).  
Then the H\"{o}lder inequality, Lemma \ref{DH} and the Young inequality yield that
\begin{align*}
&\mathbb{E}\left[\sum_{j=0}^{n}\left(\prod_{i=j+1}^{n}\left(1+C(h+G_i)\right)\right)T_{2j}\right]\\
&\le C\sum_{j=0}^{n}\sum_{k=1}^d\int_{t_j}^{t_{j+1}}\int_{t_j}^t \mathbb{E}\Bigg[D_r^k\left(\prod_{i=j+1}^{n}\left(1+C(h+G_i)\right)\right)\|Q_j-Q(t_j)\|^{2p-2}\left(Q(t_j)-Q_j\right)^\top\sigma_k\,\ud r\Bigg]\,\ud t\\
&\le C\sum_{j=0}^{n}\sum_{k=1}^d\int_{t_j}^{t_{j+1}}\int_{t_j}^t \left(\mathbb{E} \left[D_r^k\left(\prod_{i=j+1}^{n}\left(1+C(h+G_i)\right)\right)\right]^{2p}\right)^{\frac{1}{2p}}\left(\mathbb{E} \left[\|Q_j-Q(t_j)\|^{2p}\right]\right)^{\frac{2p-1}{2p}}\,\ud r\,\ud t\\
&\le \sum_{j=0}^{n}Ch^2\left(\mathbb{E} \left[\|Q_j-Q(t_j)\|^{2p}\right]\right)^{\frac{2p-1}{2p}}= \sum_{j=0}^{n}Ch^{\frac{2p+1}{2p}}\left(h\mathbb{E} \left[\|Q_j-Q(t_j)\|^{2p}\right]\right)^{\frac{2p-1}{2p}}\\
&\le\sum_{j=0}^{n}h\mathbb{E} \left[\mathcal{S}_j^{2p}\right]+\sum_{j=0}^{n}h^{2p+1}.
\end{align*}
Combining \eqref{En+1}, \eqref{T1} and the discrete Gronwall lemma, we complete the proof.
\qed

Similar to \cite[Corollary 4.1]{BCH18}, from the Theorem \ref{SC1} above, we conclude the following stronger error estimation immediately. 
\begin{cor}
Let Assumption \ref{F2} hold, $h_0$ be a sufficiently small positive constant and $p\ge1$. Then for arbitrary $0<\delta<1$, there exists some positive constant $C=C(p,T,\sigma,\delta,X(0))$ such that for any $h\in(0,h_0]$,
\begin{equation*}
\left\|\sup_{n\le N^h}\|X_n-X(t_n)\|\right\|_{L^{2p}(\Omega)}\le Ch^{\delta}.
\end{equation*}
\end{cor}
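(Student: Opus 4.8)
The plan is to bootstrap the uniform-in-$n$ estimate from the pointwise-in-$n$ estimate of Theorem \ref{SC1}, exploiting that the latter holds for \emph{every} moment exponent. Fix $p\ge1$ and $\delta\in(0,1)$. First I would choose an integer $p'\ge p$ large enough that $1-\tfrac{1}{2p'}\ge\delta$; this is possible since $1-\tfrac{1}{2p'}\to1$ as $p'\to\infty$.

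Next, replace the supremum by a sum over the finitely many grid points:
\begin{equation*}
\mathbb{E}\left[\sup_{n\le N^h}\|X_n-X(t_n)\|^{2p'}\right]\le\sum_{n=0}^{N^h}\mathbb{E}\left[\|X_n-X(t_n)\|^{2p'}\right].
\end{equation*}
Applying Theorem \ref{SC1} with moment exponent $p'$ to each summand gives $\mathbb{E}\big[\|X_n-X(t_n)\|^{2p'}\big]\le\big(C(p')h\big)^{2p'}$. Since $N^h+1=\tfrac{T}{h}+1\le\tfrac{C}{h}$ for $h\in(0,h_0]$, the right-hand side above is bounded by $Ch^{2p'-1}$, and taking the $(2p')$-th root yields
\begin{equation*}
\left\|\sup_{n\le N^h}\|X_n-X(t_n)\|\right\|_{L^{2p'}(\Omega)}\le Ch^{1-\frac{1}{2p'}}.
\end{equation*}

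Finally, since $(\Omega,\mathbb{P})$ is a probability space we have $\|\cdot\|_{L^{2p}(\Omega)}\le\|\cdot\|_{L^{2p'}(\Omega)}$ for $p\le p'$, and because $h\le h_0\le1$ together with $1-\tfrac{1}{2p'}\ge\delta$ gives $h^{1-\frac{1}{2p'}}\le h^{\delta}$; combining these two facts with the previous display yields the asserted bound with $C=C(p,T,\sigma,\delta,X(0))$, the $\delta$-dependence entering solely through the choice of $p'$. I do not expect a genuine obstacle here: the only points requiring care are that the constant must be permitted to depend on $\delta$ (hence on $p'$) and that Theorem \ref{SC1} must be invoked at an arbitrarily high moment, both of which are available. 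This is exactly the mechanism behind \cite[Corollary 4.1]{BCH18}.
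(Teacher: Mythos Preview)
Your proposal is correct and follows essentially the same approach as the paper's own proof: bound the supremum by a sum, apply Theorem \ref{SC1} at a high moment exponent to get $\mathbb{E}\big[\sup_{n}\|X_n-X(t_n)\|^{2q}\big]\le Ch^{2q-1}$, and then choose $q\ge p$ large enough relative to $\delta$. The only cosmetic difference is that the paper states the threshold as $1-\tfrac{1}{q}\ge\delta$, which is slightly stronger than your sharp condition $1-\tfrac{1}{2p'}\ge\delta$, but either suffices.
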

\begin{proof}
Owing to Theorem \ref{SC1}, we deduce that
\begin{align*}
\mathbb{E}\left[\left\|\sup_{n\le N^h}\|X_n-X(t_n)\|\right\|^{2q}\right]\le\mathbb{E}\left[\sum_{n=1}^{N^h}\left\|X_n-X(t_n)\right\|^{2q}\right]\le Ch^{2q-1},\,\forall\,q\ge 1.
\end{align*}
By choosing $1-\frac{1}{q}\ge \delta$ and $q\ge p$, we finish the proof.
\end{proof}

\section{Convergence in probability density function}\label{S6}

In Sections \ref{S3} and \ref{S4}, we have shown the existence of density functions of  $X(t)$, $t\in(0,T]$ and  $X_n$, $n=2,\cdots,N^h$. It is natural to ask what the relationship between these density functions is.
In this section,  we show that the density function of 
$X(T)$ can be approximated by that of $X_{N^h}$. 
Meanwhile, the approximation error between the density functions is analyzed.

\subsection{Convergence in $\mathbb{D}^{\alpha,p}(\mathbb{R}^{2m})$}

We consider the convergence in $\mathbb{D}^{\alpha,p}(\mathbb{R}^{2m})$ in this part, which is a nature extension of  the convergence in $L^{2p}(\Omega;\mathbb{R}^{2m})$ of the proposed scheme \eqref{split sol}. We also remark that convergence in $\mathbb{D}^{1,p}$ for It\^{o}-Taylor approximation solution for general SDEs whose coefficients are smooth with bounded derivatives has been shown in \cite{HW96}.

\begin{tho}\label{MDC} 
Let Assumption \ref{F2} hold, $h_0$ be a sufficiently small positive constant and $\alpha,\,p\ge1$ be two integers. There exists some positive constant $C=C(p,T,\sigma,\alpha,X(0))$ such that for any $h\in(0,h_0]$,
\begin{equation}\label{Mdc}
\sup_{n\le N^h}\left\|D^\alpha X_n-D^\alpha X(t_n)\right\|_{L^p(\Omega;\mathbb{H}^{\otimes\alpha}\bigotimes\mathbb{R}^{2m})}\le Ch.
\end{equation}
\end{tho}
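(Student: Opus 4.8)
The plan is to prove \eqref{Mdc} by induction on the order $\alpha$ of the Malliavin derivative, mirroring the structure already used in the proof of Theorem \ref{SC1} but now propagating the strong rate $h$ through the linear equations satisfied by the Malliavin derivatives. The base case $\alpha=0$ is precisely Theorem \ref{SC1} (together with \eqref{DKp}-type conversions between pathwise kernel bounds and $\mathbb{H}^{\otimes\alpha}\otimes\mathbb{R}^{2m}$-norms), so it suffices to treat $\alpha\ge1$ assuming the estimate holds for all orders below $\alpha$. The first step is to write down the integral/recursive equations for $D_{r_1,\ldots,r_\alpha}X(t)$ (from \eqref{DXM1}) and for $D_{r_1,\ldots,r_\alpha}X_n$ (obtained by differentiating \eqref{DEP}--\eqref{DEQ} repeatedly, as was done for $\alpha=2$ in Lemma \ref{NDI}), subtract them, and estimate the difference $\Delta_n^{(\alpha)}:=D_{r_1,\ldots,r_\alpha}X_n-D_{r_1,\ldots,r_\alpha}X(t_n)$ in a Gronwall framework.

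The core of the argument is a one-step estimate of the form
\begin{equation*}
\left\|\Delta_{n+1}^{(\alpha)}\right\|\le\left(1+C(h+\widetilde G_n)\right)\left\|\Delta_n^{(\alpha)}\right\|+R_n^{(\alpha)},
\end{equation*}
where $\widetilde G_n$ is a quantity of the same type as $G_n$ in \eqref{Gn} (a local average of $\lfloor Q\rfloor^{2\vec l-\epsilon\mathbbm 1}$-type terms that is exponentially integrable uniformly in $h$ by \eqref{DEFE}, \eqref{SE3}), and the remainder $R_n^{(\alpha)}$ collects three kinds of contributions: (i) consistency errors coming from the difference between the AVF/exponential one-step map and the exact flow acting on the \emph{exact} lower-order derivatives — these are $\mathcal{O}(h^2)$ in the appropriate $L^q(\Omega)$ sense, exactly as $K_n$ was in Lemma \ref{lem2}; (ii) terms in which a smooth coefficient (a derivative $\partial^{k_1}\cdots\partial^{k_\nu}A_0$ or a mean-value remainder of $\nabla^2F$) is evaluated at $X_n$ versus $X(t_n)$ and then multiplied by products of lower-order Malliavin derivatives — these are controlled by the polynomial growth of $\partial^\alpha F$ (Assumption \ref{F2}), the moment bounds of Lemmas \ref{MB}, \ref{EE}, the uniform Malliavin-regularity bounds of \eqref{DK} and Lemma \ref{NDI}, and the induction hypothesis, which supplies the factor $h$; (iii) commutator-type terms of the kind $J^\iota_{\kappa n}$ appearing in \eqref{DEP2}--\eqref{DEQ2}, each carrying an explicit factor $h$ and bounded in every $L^q(\Omega)$ by \eqref{JKI}, against which one again pairs a lower-order difference of size $h$. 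Summing over $n$, applying the discrete Gronwall lemma with the random coefficient $C(h+\widetilde G_n)$, and then taking $L^p(\Omega)$ norms exactly as in \eqref{En+1}--\eqref{T1} — i.e. splitting off $\exp(\sum CG_i)$ by Hölder and using its uniform integrability — yields $\sup_n\mathbb E\|\Delta_n^{(\alpha)}\|^p\le Ch^p$ for each fixed multi-index $(r_1,\ldots,r_\alpha;j_1,\ldots,j_\alpha)$, uniformly in the $r_i$. Finally, integrating the pathwise kernel estimate over $[0,T]^\alpha$ as in \eqref{DKp} converts this into the claimed bound \eqref{Mdc} in the $\mathbb{H}^{\otimes\alpha}\otimes\mathbb{R}^{2m}$ norm.

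The step I expect to be the main obstacle is the bookkeeping for the remainder term of type (ii) at general order $\alpha$: when one differentiates the implicit relations \eqref{DEP}--\eqref{DEQ} $\alpha$ times, the number of terms (partitions of $\{1,\ldots,\alpha\}$, each factor being either a resolvent $(I+\tfrac{h^2}{2}F_1)^{-1}$, a matrix $F_i$ or one of its Malliavin derivatives, or a lower-order derivative of $P_n$ or $Q_n$) grows combinatorially, and for each such term one must (a) match it against the structurally analogous term in the expansion of $D^\alpha X(t)$ from \eqref{DXM1}, (b) telescope the difference so that each summand contains either an explicit $h$-power, a factor $\widetilde G_n$, or a lower-order Malliavin-derivative difference $\Delta_n^{(\beta)}$ with $\beta<\alpha$, and (c) verify that the accompanying coefficients lie in $L^{\infty-}(\Omega)$ uniformly in $h$ (this is where \eqref{DEF1}--\eqref{DEF3}, Assumption \ref{F2}, and the uniform exponential-integrability Proposition \ref{EqE} are used). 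This is conceptually identical to the $\alpha=1,2$ cases already carried out, but the induction must be set up carefully enough that the estimate closes; the key point making it work is that every "new" term produced by differentiating the resolvent or the matrices $F_i$ either carries a power of $h$ (from the $\tfrac{h^2}{2}$ or $h^3$ prefactors, as in the $J^\iota_{\kappa n}$ terms) or is paired with an exact-solution counterpart that cancels the leading part, leaving an $\mathcal O(h)$ difference — so no loss of rate occurs at any order. A slight additional care is needed because $D^\alpha X(t)$ involves iterated \emph{pathwise} derivatives along the Stratonovich flow while $D^\alpha X_n$ is built from the one-step maps, but since the diffusion coefficient is constant the Malliavin derivatives of $X_n$ of order $\ge2$ in the "pure-noise" slots vanish (cf. \eqref{DX2}), so only the finitely many "drift" slots contribute and the comparison is with the corresponding terms of \eqref{DXM1}.
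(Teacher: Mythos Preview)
Your proposal is correct and follows the same overall strategy as the paper: induction on $\alpha$, a discrete Gronwall argument with the exponentially integrable random coefficient $G_n$ (controlled via \eqref{SE3}), and remainder terms bounded either as $O(h^2)$ consistency errors or as $h$ times lower-order differences handled by the induction hypothesis and Theorem \ref{SC1}.

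The paper's execution is organizationally simpler, however, and sidesteps precisely the bookkeeping you flag as the main obstacle. Rather than differentiating the numerical recursion \eqref{DEP}--\eqref{DEQ} and the exact equation \eqref{DXM1} separately and then matching terms, the paper applies $D_{r_1,\ldots,r_\alpha}$ directly to the \emph{error equations} \eqref{P}--\eqref{Q}. Since those equations already have the structure ``linear-in-error term plus explicit $O(h^2)$ remainders $R_1,R_2,\eta_n$'', linearity of the Malliavin derivative immediately produces a recursion of the form \eqref{DE1} for $D^\alpha(X_n-X(t_n))$ with the same coefficient $(h+G_n)$; the new remainders $S_{n\alpha}^{\kappa\iota}$ are simply $D^\alpha$ applied (via Leibniz and the chain rule) to the old remainders $R_1,R_2$, and each resulting piece is either $O(h^2)$ outright or contains a factor $D^\beta(Q(t_n)-Q_n)$ with $\beta<\alpha$ multiplied by quantities in $L^{\infty-}(\Omega)$. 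No term-by-term comparison of two separate expansions is required. The paper also isolates the step containing $r_1\vee\cdots\vee r_\alpha$ and checks directly that the initial error there is $O(h)$ (coming from the mismatch between $D_{r}P_{i+1}=e^{-v(t_{i+1}-r)}\sigma$ and $\int_r^{t_{i+1}}\!D_rP(s)\,ds$), which feeds into the Gronwall estimate \eqref{DE2}. Your route would arrive at the same place, just with more explicit telescoping.
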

\begin{proof}
We prove \eqref{Mdc} by induction on $\alpha$.
For $\alpha=1$, by the H\"{o}lder inequality, there exists $C>0$ such that 
\begin{align*}
\|DX_n-DX(t_n)\|_{L^p(\Omega;\mathbb{H}\bigotimes\mathbb{R}^{2m})}^p
&=\mathbb{E}\left|\int_0^T\|D_{r_1}X_n-D_{r_1}X(t_n)\|^2\,\ud r\right|^{\frac{p}{2}}\\
&\le C\int_0^T\mathbb{E}\|D_{r_1}X_n-D_{r_1}X(t_n)\|^p\,\ud r.
\end{align*}
Thus, it suffices to show that for any fixed $r_1\in(0,T]$, 
\begin{equation*}
\sup_{n\le N^h}\mathbb{E}\|D_{r_1}X_n-D_{r_1}X(t_n)\|^{p}\le Ch^{p}.
\end{equation*}
Let $r_1\in(t_i,t_{i+1}]$ for some integer $0\le i\le N^h-1$. Taking the Malliavin derivatives on both sides of \eqref{P} and \eqref{Q} respectively, then for $i<n\le N^h-1$, 
\begin{align}\label{DDP}
D_{r_1}P_{n+1}-D_{r_1}P(t_{n+1})&=e^{-vh}(D_{r_1}P_n-D_{r_1}P(t_n))\\\nonumber
&\quad+e^{-vh}\int_{t_n}^{t_{n+1}}\int_0^1\int_0^1 \nabla^2 F\left(\theta Q(t)+(1-\theta)\left(Q_n+\tau(Q_{n+1}-Q_n)\right)\right)\\\nonumber
&\quad(D_{r_1}Q(t_n)-D_{r_1}Q_n)\,\ud \theta\,\ud \tau\,\ud t+S_{1n},\\\label{DDQ}
D_{r_1}Q_{n+1}-D_{r_1}Q(t_{n+1})&=D_{r_1}Q_n-D_{r_1}Q(t_n)+h(D_{r_1}P_n-D_{r_1}P(t_n))+S_{2n},
\end{align}
where
\begin{align*}
S_{1n}&=S^{11}_n+S^{12}_n+S^{13}_n+S^{14}_n,\\
S_{2n}&=S^{21}_n+S^{22}_n+S^{23}_n,\\
S^{11}_n&=\int_{t_n}^{t_{n+1}}\left[-e^{-vh}+e^{-v(t_{n+1}-t)}\right]\nabla^2 F(Q(t))D_{r_1}Q(t)\,\ud t,\\
S^{12}_n&=e^{-vh}\int_{t_n}^{t_{n+1}}\int_0^1\int_0^1 D_{r_1}\left[\nabla^2 F\left(\theta Q(t)+\left(1-\theta\right)\left(Q_n+\tau\left( Q_{n+1}-Q_n\right)\right)\right)\right]\\
&\quad(Q(t_n)-Q_n)\,\ud \theta\,\ud \tau\,\ud t,\\
S^{13}_n&=e^{-vh}\int_{t_n}^{t_{n+1}}\int_0^1\int_0^1 D_{r_1}\left[\nabla^2 F\left(\theta Q(t)+\left(1-\theta\right)\left(Q_n+\tau\left(Q_{n+1}-Q_n\right)\right)\right)\right]\\
&\quad\left(\int_{t_n}^{t}  P(s)\,\ud s-\frac{\tau h}{2}(P_n+\bar P_{n+1})\right)\,\ud \tau\,\ud \theta\,\ud t,\\
S^{14}_n&=e^{-vh}\int_{t_n}^{t_{n+1}}\int_0^1\int_0^1 \nabla^2 F\left(\theta Q(t)+\left(1-\theta\right)\left(Q_n+\tau\left(Q_{n+1}-Q_n\right)\right)\right)\\
&\quad D_{r_1}\left[\int_{t_n}^{t}  P(s)\,\ud s-\frac{\tau h}{2}(P_n+\bar P_{n+1})\right]\,\ud \tau\,\ud \theta\,\ud t,\\
S^{21}_n&=\int_{t_n}^{t_{n+1}}\int_{t_n}^{t}e^{-v(t-s)}\nabla^2 F(Q(s))D_{r_1}Q(s)\,\ud s\,\ud t,\\
S^{22}_n&=\left(h-\frac{1-e^{-vh}}{v}\right)D_{r_1}P(t_n),\\
S^{23}_n&=-\frac{h^2}{2}\int_0^1D_{r_1}\left[\nabla F(Q_n+\tau(Q_{n+1}-Q_n))\right]\,\ud\tau.
\end{align*}
Applying the triangle inequality yields
\begin{align*}
\|D_{r_1}P_{n+1}-D_{r_1}P(t_{n+1})\|&\le \|D_{r_1}P_n-D_{r_1}P(t_n)\|+G_n\|D_{r_1}Q(t_n)-D_{r_1}Q_n\|+\|S_{1n}\|,\\
\|D_{r_1}Q_{n+1}-D_{r_1}Q(t_{n+1})\|&\le \|D_{r_1}Q_n-D_{r_1}Q(t_n)\|+h\|D_{r_1}P(t_n)-D_{r_1}P_n\|+\|S_{2n}\|.
\end{align*} 
Define $\mathcal{R}_{n+1}:=\|D_{r_1}P_{n+1}-D_{r_1}P(t_{n+1})\|+\|D_{r_1}Q_{n+1}-D_{r_1}Q(t_{n+1})\|$, then 
\begin{align}\label{DE1}
\mathcal{R}_{n+1}\le \mathcal{R}_{n}+(h+G_n)\mathcal {R}_{n}+S_n,
\end{align}
where $S_n=\|S_{1n}\|+\|S_{2n}\|$. Using the H\"{o}lder inequality, the estimate \eqref{DK}, Lemmas \ref{MB}, \ref{EE} and \ref{NDI}, we obtain that for $\kappa=1,\,\iota=1,2,3,4,$
\begin{align}\label{Snp}
&\|S^{\kappa\iota}_n\|_{L^q(\Omega)}\le Ch^2,\,q \ge1, \,n=0,\ldots,N^h-1.
\end{align}
And for $\kappa=2,\,\iota=1,2,3$, \eqref{Snp} also holds.
Therefore
\begin{align}\label{Snp1}
\mathbb{E}\left[\left(\sum_{j=i+1}^{n} S_j\right)^q\right]\le(n-i)^{q-1}\sum_{j=i+1}^{n}\mathbb{E}\left[S_j^q\right]
\le Ch^{q}, \,\forall\,q\ge1.
\end{align}
For $n=i$, since $r\in (t_i,t_{i+1}]$, we get $D_{r_1}X(t_i)=0,\, D_{r_1}X_i=0$. Hence
\begin{align*}
&D_{r_1}P_{i+1}-D_{r_1}P(t_{i+1})=S_{1i},\qquad D_{r_1}Q_{i+1}-D_{r_1}Q(t_{i+1})=S^{21}_i+\frac{1-e^{-v(t_{i+1}-r)}}{v}\sigma.
\end{align*}
Combining \eqref{Snp} and the fact that $t_{i+1}-r<h$, we obtain
\begin{align}\label{Enp}
\mathbb{E}\left[\mathcal{R}_{i+1}^q\right]\le Ch^q,\,\forall\,q\ge1.
\end{align}
It follows from the discrete Gronwall lemma and \eqref{DE1} that
 for any $n=0,\ldots,N^h-1$,
\begin{align}\label{DE2}
\mathcal{R}_{n+1}^p \le C\left(\sum_{j=i+1}^{n} S_j\right)^p\exp\left(\sum_{j=i+1}^{n} p(h+G_j)\right)+C\exp\left(\sum_{j=i+1}^{n} p(h+G_j)\right)\mathcal{R}_{i+1}^p.
\end{align}
Then using estimates \eqref{SE3}, \eqref{Snp1}, \eqref{Enp} and the H\"{o}lder inequality, we complete the proof of the assertion for $\alpha=1$.

For $\alpha\ge2$, let $r_k\in(t_{i_k},t_{{i_k}+1}]$ for $0\le i_k\le N^h-1,\, k=1,\ldots,\alpha$. Taking the $\alpha$th Malliavin derivatives on both sides of \eqref{P} and \eqref{Q}, and using the chain rule, we have that  for $\max\limits_ki_k<n\le N^h-1$,
\begin{align*}
&D_{r_1,\ldots,r_\alpha}^{j_1,\ldots,j_\alpha}P_{n+1}-D_{r_1,\ldots,r_\alpha}^{j_1,\ldots,j_\alpha}P(t_{n+1})\\
&=e^{-vh}\left(D_{r_1,\ldots,r_\alpha}^{j_1,\ldots,j_\alpha}P_n-D_{r_1,\ldots,r_\alpha}^{j_1,\ldots,j_\alpha}P(t_n)\right)\\
&\quad+e^{-vh}\int_{t_n}^{t_{n+1}}\int_0^1\int_0^1 \nabla^2 F(\theta Q(t)+(1-\theta)(Q_n+\tau(Q_{n+1}-Q_n)))\\
&\qquad\qquad\qquad\qquad\qquad\quad(D_{r_1,\ldots,r_\alpha}^{j_1,\ldots,j_\alpha}Q(t_n)-D_{r_1,\ldots,r_\alpha}^{j_1,\ldots,j_\alpha}Q_n)\,\ud \theta\,\ud \tau\,\ud t
+S_{n\alpha}^1,\\
&D_{r_1,\ldots,r_\alpha}^{j_1,\ldots,j_\alpha}Q_{n+1}-D_{r_1,\ldots,r_\alpha}^{j_1,\ldots,j_\alpha}Q(t_{n+1})\\
&=D_{r_1,\ldots,r_\alpha}^{j_1,\ldots,j_\alpha}Q_n-D_{r_1,\ldots,r_\alpha}^{j_1,\ldots,j_\alpha}Q(t_n)+h\left(D_{r_1,\ldots,r_\alpha}^{j_1,\ldots,j_\alpha}P_n-D_{r_1,\ldots,r_\alpha}^{j_1,\ldots,j_\alpha}P(t_n)\right)+S_{n\alpha}^2,
\end{align*}
where
\begin{align*}
S_{n\alpha}^1&=S_{n\alpha}^{11}+S_{n\alpha}^{12}+S_{n\alpha}^{13}+S_{n\alpha}^{14},\\
S_{n\alpha}^2&=S_{n\alpha}^{21}+S_{n\alpha}^{22}+S_{n\alpha}^{23},\\
S_{n\alpha}^{11}&=\int_{t_n}^{t_{n+1}}\left[-e^{-vh}+e^{-v(t_{n+1}-t)}\right]D_{r_1,\ldots,r_\alpha}^{j_1,\ldots,j_\alpha}\left[\nabla F(Q(t))\right]\,\ud t,\\
S_{n\alpha}^{12}&=e^{-vh}\int_{t_n}^{t_{n+1}}\int_0^1\int_0^1 D_{r_1,\ldots,r_\alpha}^{j_1,\ldots,j_\alpha}\bigg[\nabla^2 F\left(\theta Q(t)+\left(1-\theta\right)\left(Q_n+\tau\left( Q_{n+1}-Q_n\right)\right)\right)\\
&\quad(Q(t_n)-Q_n)\bigg]\,\ud \theta\,\ud \tau\,\ud t-e^{-vh}\int_{t_n}^{t_{n+1}}\int_0^1\int_0^1\\ 
&\nabla^2F(\theta Q(t)+(1-\theta)(Q_n+\tau(Q_{n+1}-Q_n)))(D_{r_1,\ldots,r_\alpha}^{j_1,\ldots,j_\alpha}Q(t_n)-D_{r_1,\ldots,r_\alpha}^{j_1,\ldots,j_\alpha}Q_n)\,\ud \theta\,\ud \tau\,\ud t,\\
S_{n\alpha}^{13}&=e^{-vh}\int_{t_n}^{t_{n+1}}\int_0^1\int_0^1 D_{r_1,\ldots,r_\alpha}^{j_1,\ldots,j_\alpha}\bigg[\nabla^2 F\left(\theta Q(t)+\left(1-\theta\right)\left(Q_n+\tau\left(Q_{n+1}-Q_n\right)\right)\right)\\
&\quad\left(\int_{t_n}^{t}  P(s)\,\ud s-\frac{\tau h}{2}(P_n+\bar P_{n+1})\right)\bigg]\,\ud \tau\,\ud \theta\,\ud t,\\
S_{n\alpha}^{21}&=\int_{t_n}^{t_{n+1}}\int_{t_n}^{t}e^{-v(t-s)}D_{r_1,\ldots,r_\alpha}^{j_1,\ldots,j_\alpha}[\nabla F(Q(s))]\,\ud s\,\ud t,\\
S_{n\alpha}^{22}&=\left(h-\frac{1-e^{-vh}}{v}\right)D_{r_1,\ldots,r_\alpha}^{j_1,\ldots,j_\alpha}P(t_n),\\
S_{n\alpha}^{23}&=-\frac{h^2}{2}\int_0^1D_{r_1,\ldots,r_\alpha}^{j_1,\ldots,j_\alpha}[\nabla F(Q_n+\tau(Q_{n+1}-Q_n))]\,\ud\tau.
\end{align*}
In view of the Wiener-It\^o chaos expansion of the Malliavin derivative (see e.g. \cite[Proposition 1.2.7]{DN06}), we have $D_{r_1,\ldots,r_\alpha}^{j_1,\ldots,j_\alpha}Q_{n+1}=D_{r_{\sigma_1},\ldots,r_{\sigma_\alpha}}^{j_{\sigma_1},\ldots,j_{\sigma_\alpha}}Q_{n+1}$ for all permutations of $(1,2,\ldots,\alpha)$. Thus, for $\max\limits_ki_k=n$, without loss of generality, we assume that $n=i_1$. Then it follows that
\begin{align*}
D_{r_1,\ldots,r_\alpha}^{j_1,\ldots,j_\alpha}P_{n+1}-D_{r_1,\ldots,r_\alpha}^{j_1,\ldots,j_\alpha}P(t_{n+1})=S_{n\alpha}^1;\qquad
D_{r_1,\ldots,r_\alpha}^{j_1,\ldots,j_\alpha}Q_{n+1}-D_{r_1,\ldots,r_\alpha}^{j_1,\ldots,j_\alpha}Q(t_{n+1})=S_{n\alpha}^{21}.
\end{align*}
Subsequent proof is similar to the case of $\alpha=1$ and is omitted.
\end{proof}
\begin{rem}\label{Fk1}
In Theorem \ref{MDC}, if the condition $F\in C_p^\infty$ is replaced by  $F\in C_p^k$ for some fixed constant $k\ge2$, then by Remark \ref{Fk0}, the conclusion \eqref{Mdc} holds for any $\alpha\le k-2$ and $p\ge1$.
\end{rem}
\subsection{Convergence in probability density function}
As is well known, the first probabilistic proof of H\"ormander's theorem was given by Malliavin, whose key step is to prove that, under H\"ormander's  condition, the Malliavin covariance matrix of the exact solution of the SDE is non-degenerate. For our discrete case, in the light of Lemma \ref{NDI}, the smoothness of the density function of numerical solution $X_{N^h}$ boils down to the question of the boundedness of the moments of $\det(\gamma_{N^h})^{-1}$ as well. 

In this part, we show that the proposed numerical solution $X_{N^h}$ is uniformly non-degenerate with respect to sufficiently small stepsize $h>0$, and therefore admits a smooth density function.

\begin{tho}\label{UD}
Let Assumptions \ref{F2}-\ref{F4} hold. Then for any $1\le p<\infty$, there exists a positive constant $\nu(p)$ such that 
\begin{equation*}
\left\|\det(\gamma_{N^h})^{-1}\right\|_{L^p(\Omega)}=\mathcal{O}\left(h^{-\nu(p)}\right),~as~h\rightarrow0.
\end{equation*}
\end{tho}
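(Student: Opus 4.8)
The plan is to control $\det(\gamma_{N^h})^{-1}$ through a lower bound on the smallest eigenvalue $\lambda_{min}(\gamma_{N^h})$, and to extract such a bound from the last two steps of the recursion \eqref{MX}. Write $\Sigma$ for the $2m\times 2m$ block matrix whose $(P,P)$-block is $\sigma\sigma^\top$ and whose other blocks vanish, put $c_h:=\frac{1-e^{-2vh}}{2v}$ and $\lambda_0:=\lambda_{min}(\sigma\sigma^\top)$, which is strictly positive by Assumption \ref{F4}, and let $A\succeq B$ mean that $A-B$ is nonnegative definite. We take $h_0$ sufficiently small (in particular $h_0\le\sqrt{2/K}$, so that \eqref{DEF1} applies, $c_h\ge h/2$, and $N^h\ge2$) and fix $h\in(0,h_0]$.

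First, since every eigenvalue of the nonnegative definite matrix $\gamma_{N^h}$ is at least $\lambda_{min}(\gamma_{N^h})$, one has $\det(\gamma_{N^h})^{-1}=\prod_{i=1}^{2m}\lambda_i(\gamma_{N^h})^{-1}\le\lambda_{min}(\gamma_{N^h})^{-2m}$, so it suffices to prove $\big\|\lambda_{min}(\gamma_{N^h})^{-1}\big\|_{L^{q}(\Omega)}=\mathcal{O}(h^{-3})$ for every $q\ge1$. Iterating \eqref{MX} (with $\gamma_1=c_h\Sigma$) gives $\gamma_{N^h}=c_h\sum_{k=0}^{N^h-1}\Phi_k\Sigma\Phi_k^\top$, where $\Phi_0=I$ and $\Phi_k=A_{N^h-1}A_{N^h-2}\cdots A_{N^h-k}$. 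Discarding all but the first two (nonnegative definite) summands, $\gamma_{N^h}\succeq c_h\big(\Sigma+A_{N^h-1}\Sigma A_{N^h-1}^\top\big)=:c_hM$, hence $\lambda_{min}(\gamma_{N^h})\ge\tfrac{h}{2}\,\lambda_{min}(M)$, and we are reduced to a pathwise lower bound for $\lambda_{min}(M)$.

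The core step is the a.s.\ estimate $\lambda_{min}(M)\ge C^{-1}h^{2}\big(1+\|F_1(Q_{N^h-1},Q_{N^h})\|\big)^{-6}$. Reading off the block form of $A_{N^h-1}$ from \eqref{MX}, its $(P,P)$-, $(P,Q)$-, $(Q,P)$- and $(Q,Q)$-blocks are $B$, $C$, $hD$, $E$ with $D=\big(I+\tfrac{h^{2}}{2}F_1(Q_{N^h-1},Q_{N^h})\big)^{-1}$ and $B=e^{-vh}D\big(I-\tfrac{h^{2}}{2}F_1(Q_{N^h-1},Q_{N^h})\big)$; consequently $M$ has $(P,P)$-block $\sigma\sigma^\top+B\sigma\sigma^\top B^\top\succeq\lambda_0 I$, off-diagonal block $hB\sigma\sigma^\top D^\top$ of norm $\mathcal{O}(h)$, and $(Q,Q)$-block $h^{2}D\sigma\sigma^\top D^\top$. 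By the Woodbury (push-through) identity the Schur complement of $M$ relative to the $(P,P)$-block equals $h^{2}D\big[(\sigma\sigma^\top)^{-1}+B^\top(\sigma\sigma^\top)^{-1}B\big]^{-1}D^\top$; using $\|(I+\tfrac{h^{2}}{2}F_1)^{-1}\|\le2$ (from \eqref{DEF1}) and $h\le1$ to bound $\|B\|,\|D\|,\|D^{-1}\|$ by $C(1+\|F_1(Q_{N^h-1},Q_{N^h})\|)$, this Schur complement is $\succeq C^{-1}h^{2}(1+\|F_1(Q_{N^h-1},Q_{N^h})\|)^{-4}I$. Feeding the bounds on the $(P,P)$-block, the off-diagonal block, and this Schur complement into the standard lower bound for the least eigenvalue of a positive definite block matrix (obtained from the block form of $M^{-1}$ in terms of $(M_{11})^{-1}$, the inverse Schur complement, and the off-diagonal block) then yields the claimed bound for $\lambda_{min}(M)$, the $h$-power being unchanged and one further factor $(1+\|F_1(Q_{N^h-1},Q_{N^h})\|)^{-2}$ coming from the coupling.

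Combining the three bounds, $\det(\gamma_{N^h})^{-1}\le Ch^{-6m}\big(1+\|F_1(Q_{N^h-1},Q_{N^h})\|\big)^{12m}$ a.s. Since $\|F_1(Q_n,Q_{n+1})\|\le C\big(1+\lfloor Q_n\rfloor^{2\vec{l}-\epsilon\mathbbm{1}}+\lfloor Q_{n+1}\rfloor^{2\vec{l}-\epsilon\mathbbm{1}}\big)$ by \eqref{FQ2}, and $\sup_{h\in(0,h_0]}\sup_{n\le N^h}\mathbb{E}\big[\exp\!\big(C\lfloor Q_n\rfloor^{2\vec{l}-\epsilon\mathbbm{1}}\big)\big]<\infty$ by \eqref{EqNI} (together with Jensen's, Hölder's and Young's inequalities, exactly as in the derivation of \eqref{DEFE}), the polynomial moments of $\|F_1(Q_{N^h-1},Q_{N^h})\|$ are bounded uniformly in $h$; hence $\|\det(\gamma_{N^h})^{-1}\|_{L^{p}(\Omega)}\le Ch^{-6m}$, i.e.\ the claim holds with $\nu(p)=6m$ (in fact independently of $p$). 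The main obstacle is precisely the algebraic block-matrix estimate of the previous paragraph: one must verify that the randomness of $F_1(Q_{N^h-1},Q_{N^h})$ enters the lower bound for $\lambda_{min}(M)$ only through the explicit, at most polynomially degenerate factor $(1+\|F_1\|)^{-6}$, with no hidden amplification by cancellation — which is exactly what the Woodbury form of the Schur complement and the uniform bound $\|(I+\tfrac{h^{2}}{2}F_1)^{-1}\|\le2$ are used to guarantee.
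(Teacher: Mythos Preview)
Your proof is correct and follows the same overall strategy as the paper: reduce $\det(\gamma_{N^h})^{-1}$ to $\lambda_{min}(\gamma_{N^h})^{-2m}$, discard all but the last two terms in the recursion \eqref{MX}, obtain a pathwise lower bound of the form $\lambda_{min}(\gamma_{N^h})\ge Ch^{3}(1+\|F_1(Q_{N^h-1},Q_{N^h})\|)^{-\kappa}$, and close with the uniform moment bounds on $F_1$ coming from \eqref{EqNI} and Lemma~\ref{EE}. Both arguments land on the same exponent $\nu(p)=6m$.

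The one genuine difference is in how the lower bound for $\lambda_{min}(M)$, $M=\Sigma+A_{N^h-1}\Sigma A_{N^h-1}^\top$, is extracted. The paper works with the Rayleigh quotient $f(y)=\|y_1^\top\sigma\|^2+\|y_1^\top U_{N^h}\sigma+y_2^\top B_{N^h}\sigma\|^2$, splits the unit sphere into $\{\|y_1\|\ge h^\delta\}$ and $\{\|y_1\|<h^\delta\}$, and on the second region uses Young's inequality with a carefully tuned $\epsilon$ to kill the cross term before invoking a determinant/Frobenius-norm lower bound for $\lambda_{min}(B_{N^h}\sigma\sigma^\top B_{N^h}^\top)$. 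Your route is more algebraic: you compute the Schur complement of $M$ relative to the $(P,P)$-block, simplify it via the Woodbury/push-through identity to $h^2D\big[(\sigma\sigma^\top)^{-1}+B^\top(\sigma\sigma^\top)^{-1}B\big]^{-1}D^\top$, and then bound $\|M^{-1}\|$ block-wise. This avoids the sphere-splitting and parameter tuning entirely, and makes transparent that the only source of randomness in the lower bound is $\|D^{-1}\|=\|I+\tfrac{h^2}{2}F_1\|$, entering polynomially. (In fact your powers of $(1+\|F_1\|)$ are looser than necessary --- since $\|B\|\le 1+h^2K$ and $\|D\|\le 2$ are deterministically bounded by \eqref{DEF1}--\eqref{DEF2}, one actually gets $\lambda_{min}(S)\ge Ch^2(1+\|F_1\|)^{-2}$ and hence $\lambda_{min}(M)\ge Ch^2(1+\|F_1\|)^{-2}$ --- but this does not affect the conclusion.) Either argument is fine; yours is arguably cleaner, the paper's is slightly more elementary.
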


\begin{proof}
Since
\begin{align}\label{det}
\det(\gamma_{N^h})^{-1}=\prod_{i=1}^{2m}\lambda_i(\gamma_{N^h})^{-1}\le\left(\lambda_{min}(\gamma_{N^h})\right)^{-2m},
\end{align}
it suffices to estimate the smallest eigenvalue of $\gamma_{N^h}$. It follows from \eqref{MX} that
\begin{align*}
\gamma_{N^h}&=A_{N^h-1}\gamma_{N^h-1}A_{N^h-1}^\top+\gamma_1\\
&=\frac{1-e^{-2vh}}{2v}\left\{\sum_{k=0}^{N^h-2}A_{N^h-1}\cdots A_{k+1}\left[\begin{array}{cc}\sigma\sigma^\top& 0 \\0 & 0 \end{array}\right]A_{k+1}^\top\cdots A_{N^h-1}^\top+\left[\begin{array}{cc}\sigma\sigma^\top& 0 \\0 & 0 \end{array}\right]\right\}.
\end{align*}
The definition of $A_{N^h-1}$ yields that
\begin{align*}
&\left[\begin{array}{c}y_1^\top,y_2^\top\end{array}\right]A_{N^h-1}\left[\begin{array}{cc}\sigma\sigma^\top& 0 \\0 & 0 \end{array}\right]A_{N^h-1}^\top\left[\begin{array}{c}y_1\\y_2\end{array}\right]\\
=&\Bigg\|e^{-vh}y_1^\top\left(1-\frac{h^2}{2}F_1(Q_{N^h-1},Q_{N^h})\right)\left(1+\frac{h^2}{2}F_1(Q_{N^h-1},Q_{N^h})\right)^{-1}\sigma\\
&\quad+hy_2^\top\left(1+\frac{h^2}{2}F_1(Q_{N^h-1},Q_{N^h})\right)^{-1}\sigma\Bigg\|^2.
\end{align*}
To simplify the notations, we introduce
\begin{align*}
&B_{N^h}:=h\left(I+\frac{h^2}{2}F_1(Q_{N^h-1},Q_{N^h})\right)^{-1},\\
&U_{N^h}:=e^{-vh}\left(I-\frac{h^2}{2}F_1(Q_{N^h-1},Q_{N^h})\right)\left(I+\frac{h^2}{2}F_1(Q_{N^h-1},Q_{N^h})\right)^{-1}.
\end{align*}
Combining the above equalities together, we get
\begin{align*}
&\lambda_{min}(\gamma_{N^h})=\min_{\substack{y=(y_1^\top,y_2^\top)^\top\in\mathbb{R}^{2m}\\\|y\|_2=1}}y^\top\gamma_{N^h}y\\
&\ge\min_{\substack{y=(y_1^\top,y_2^\top)^\top\in\mathbb{R}^{2m}\\\|y\|=1}}\frac{1-e^{-2vh}}{2v}\left[\begin{array}{c}y_1^\top,y_2^\top\end{array}\right]\left\{A_{N^h-1}\left[\begin{array}{cc}\sigma\sigma^\top& 0 \\0 & 0 \end{array}\right]A_{N^h-1}^\top+\left[\begin{array}{cc}\sigma\sigma^\top& 0 \\0 & 0 \end{array}\right]\right\}\left[\begin{array}{c}y_1\\y_2\end{array}\right]\\
&=:\frac{1-e^{-2vh}}{2v}\min_{\substack{y=(y_1^\top,y_2^\top)^\top\in\mathbb{R}^{2m}\\\|y\|=1}}f(y),
\end{align*}
where
\begin{align*}
f(y)&=\|y_1^\top U_{N^h}\sigma+y_2^\top B_{N^h}\sigma\|^2+\|y_1^\top\sigma\|^2\\
&=y_1^\top U_{N^h}\sigma\sigma^\top U_{N^h}^\top y_1+y_2^\top B_{N^h}\sigma\sigma^\top B_{N^h}^\top y_2+2y_1^\top U_{N^h}\sigma\sigma^\top B_{N^h}^\top y_2+y_1^\top\sigma\sigma^\top y_1.
\end{align*}
By splitting  the unit sphere of $\mathbb{R}^{2m}$ into $\{\|y\|=1,\,\|y_1\|\ge h^\delta\}$ and $\{\|y\|=1,\,\|y_1\|< h^\delta\}$, with $\delta>0$ being later determined, we estimate $\lambda_{min}(\gamma_{N^h})$ as
\begin{equation}\label{gam}
\lambda_{min}(\gamma_{N^h})=\frac{1-e^{-2vh}}{2v}\min_{\substack{y=(y_1^\top,y_2^\top)^\top\in\mathbb{R}^{2m}\\\|y\|=1}}\left\{\min_{\|y_1\|\ge h^\delta}f(y), \min_{\|y_1\|<h^\delta} f(y)\right\}.
\end{equation}

Next we estimate the lower bound of $f$. The estimation of $\min_{\|y_1\|\ge h^\delta}f(y)$ is trivial, since
\begin{equation}\label{gam1}
\min_{\|y_1\|\ge h^\delta}f(y)\ge\min_{\|y_1\|\ge h^\delta}y_1^\top\sigma\sigma^\top y_1\ge\lambda_{min}\left(\sigma\sigma^\top\right)h^{2\delta}.
\end{equation}
Now we turn to giving the lower bound of the term $\min_{\|y_1\|<h^\delta}f(y)$.
Let $h\le1$. The conditions $\|y_1\|^2<h^{2\delta}$ and $\|y_1\|^2+\|y_2\|^2=1$ imply that $\|y_2\|^2>1-h^{2\delta}$. 
The Young inequality gives
\begin{equation*}
2y_1^\top U_{N^h}\sigma\sigma^\top B_{N^h}^\top y_2\ge-\epsilon y_2^\top B_{N^h}\sigma\sigma^\top B_{N^h}^\top y_2-\frac{1}{\epsilon}y_1^\top U_{N^h}\sigma\sigma^\top U_{N^h}^\top y_1,\,\forall\,\epsilon>0,
\end{equation*}
which implies
\begin{equation}\label{st0}
f(y)\ge(1-\epsilon)y_2^\top B_{N^h}\sigma\sigma^\top B_{N^h}^\top y_2+\left(1-\frac{1}{\epsilon}\right)y_1^\top U_{N^h}\sigma\sigma^\top U_{N^h}^\top y_1+y_1^\top\sigma\sigma^\top y_1.
\end{equation}
Since $\lambda_{min}({F_1(Q_{N^h-1},Q_{N^h})})\ge-\frac{K}{2}$, we have
$\lambda_i\left(U_{N^h}U_{N^h}^\top\right)\le e^{-2vh}\left(\frac{1+\frac{h^2}{4}K}{1-\frac{h^2}{4}K}\right)^2,\,i=1,\ldots,m.$
Then it follows that
\begin{align*}
\left(\frac{1}{\epsilon}-1\right)y_1^\top U_{N^h}\sigma\sigma^\top U_{N^h}^\top y_1&\le\left(\frac{1}{\epsilon}-1\right)\lambda_{max}\left(\sigma\sigma^\top\right)\|U_{N^h}^\top y_1\|^2\\&\le\left(\frac{1}{\epsilon}-1\right)\lambda_{max}\left(\sigma\sigma^\top\right)e^{-2vh}\left(\frac{1+\frac{h^2}{4}K}{1-\frac{h^2}{4}K}\right)^2\|y_1\|^2.
\end{align*}
For simplicity, set $a:=e^{-2vh}\left(\frac{1+\frac{h^2}{4}K}{1-\frac{h^2}{4}K}\right)^2$. Consequently,
\begin{equation}\label{st10}
\left(1-\frac{1}{\epsilon}\right)y_1^\top U_{N^h}\sigma\sigma^\top U_{N^h}^\top y_1+y_1^\top\sigma\sigma^\top y_1\ge\lambda_{min}\left(\sigma\sigma^\top\right)\|y_1\|^2+\left(1-\frac{1}{\epsilon}\right)\lambda_{max}\left(\sigma\sigma^\top\right)a\|y_1\|^2.
\end{equation}
Notice that if $h\rightarrow0$, then $a\rightarrow1$. Thus there exists a sufficiently small stepsize $h(v,K)\le 1$ such that for all $h\le h(v,K)$, it holds $a<2$. For any $\epsilon$ such that $\frac{2\lambda_{max}\left(\sigma\sigma^\top\right)}{2\lambda_{max}\left(\sigma\sigma^\top\right)+\lambda_{min}\left(\sigma\sigma^\top\right)}\le\epsilon<1$, we have $\frac{\lambda_{max}\left(\sigma\sigma^\top\right)a}{\lambda_{max}\left(\sigma\sigma^\top\right)a+\lambda_{min}\left(\sigma\sigma^\top\right)}<\epsilon<1$. By a straightforward calculation, we deduce that 
\begin{equation}\label{st1}
\left(1-\frac{1}{\epsilon}\right)y_1^\top U_{N^h}\sigma\sigma^\top U_{N^h}^\top y_1+y_1^\top\sigma\sigma^\top y_1\ge0.
\end{equation}
Then it suffices to give the lower bound of $(1-\epsilon)y_2^\top B_{N^h}\sigma\sigma^\top B_{N^h}^\top y_2$.
By Choosing $\epsilon=\frac{2\lambda_{max}\left(\sigma\sigma^\top\right)}{2\lambda_{max}\left(\sigma\sigma^\top\right)+\lambda_{min}\left(\sigma\sigma^\top\right)}$, $h_0=\min\{h(v,K),\sqrt{\frac{2}{K}},1\}$,  the inequality \eqref{st1}, together with \eqref{st0}, implies that 
\begin{align*}
f(y)&\ge(1-\epsilon)y_2^\top B_{N^h}\sigma\sigma^\top B_{N^h}^\top y_2\\
&\ge\frac{\lambda_{min}\left(\sigma\sigma^\top\right)}{2\lambda_{max}\left(\sigma\sigma^\top\right)+\lambda_{min}\left(\sigma\sigma^\top\right)}\lambda_{min}\left(B_{N^h}\sigma\sigma^\top B_{N^h}^\top\right)\|y_2\|^2.
\end{align*}
It remains to evaluate the minimum eigenvalue of the symmetric positive definite matrix $B_{N^h}\sigma\sigma^\top B_{N^h}^\top$. 
By utilizing \cite[Lemma 1]{YG97}, we obtain 
\begin{align*}
\lambda_{min}\left(B_{N^h}\sigma\sigma^\top B_{N^h}^\top\right)&\ge\det\left(B_{N^h}\sigma\sigma^\top B_{N^h}^\top\right)\cdot\left(\frac{m-1}{\|B_{N^h}\sigma\sigma^\top B_{N^h}^\top\|_{\mathbb F}^2}\right)^{\frac{m-1}{2}}\\
&=C(m)\det\left(B_{N^h}\sigma\sigma^\top B_{N^h}^\top\right)\cdot\frac{1}{\|B_{N^h}\sigma\sigma^\top B_{N^h}^\top\|_{\mathbb F}^{m-1}},
\end{align*}
where $\left\|B_{N^h}\sigma\sigma^\top B_{N^h}^\top\right\|_{\mathbb F}\le\|B_{N^h}\|^2_{\mathbb F}\left\|\sigma\sigma^\top\right\|_{\mathbb F}$. Here $\|\cdot\|_{\mathbb F}$ is the Frobenius norm.
 Since $B_{N^h}$ is  symmetric positive definite, we have
$\|B_{N^h}\|^2_{\mathbb F}=tr(B_{N^h}B_{N^h}^\top)=tr\left(B_{N^h}^2\right)\le m\lambda_{max}^2(B_{N^h}).$
The spectral mapping theorem and $h<\sqrt{\frac{2}{K}}$ lead to
\begin{equation*}
\lambda_{max}(B_{N^h})=h\left(1+\frac{h^2}{2}\lambda_{min}(F_1(Q_{N^h-1},Q_{N^h}))\right)^{-1}\le h\left(1-\frac{h^2}{4}K\right)^{-1}<2h.
\end{equation*} 
Notice that
\begin{equation*}
\det\left(B_{N^h}\sigma\sigma^\top B_{N^h}^\top\right)=\det(B_{N^h})^2\det\left(\sigma\sigma^\top\right)\ge\det\left(\sigma\sigma^\top\right)\lambda_{min}^{2m}(B_{N^h}).
\end{equation*} 
Combining the above inequalities together, we have
\begin{equation}\label{st3}
\lambda_{min}\left(B_{N^h}\sigma\sigma^\top B_{N^h}^\top\right)\ge C(m,\sigma)\frac{\lambda_{min}^{2m}(B_{N^h})}{\lambda_{max}^{2m-2}(B_{N^h})}\ge C(m,\sigma)h^2\left(1+\frac{h^2}{2}\lambda_{max}(F_1(Q_{N^h-1},Q_{N^h}))\right)^{-2m}.
\end{equation}
Inserting \eqref{st1} and \eqref{st3} into \eqref{st0},  we obtain that for $\|y_1\|<h^\delta$ with $h\le h_0$,
\begin{equation}\label{gam2}
f(y)\ge C(m,\sigma)h^2\left(1+\frac{h^2}{2}\lambda_{max}(F_1(Q_{N^h-1},Q_{N^h}))\right)^{-2m}\left(1-h^{2\delta}\right).
\end{equation}

Combining \eqref{gam}, \eqref{gam1} and \eqref{gam2}, we get
\begin{align*}
&\lambda_{min}(\gamma_{N^h})\ge\\
&\frac{1-e^{-2vh}}{2v}\min\left\{\lambda_{min}\left(\sigma\sigma^\top\right)h^{2\delta},\,C(m,\sigma)h^2\left(1+\frac{h^2}{2}\lambda_{max}(F_1(Q_{N^h-1},Q_{N^h}))\right)^{-2m}\left(1-h^{2\delta}\right)\right\}.
\end{align*}
Taking its reciprocal leads to
\begin{align*}
\lambda_{min}^{-1}(\gamma_{N^h})&\le\frac{2v}{1-e^{-2vh}}\max\left\{\frac{1}{\lambda_{min}\left(\sigma\sigma^\top\right)h^{2\delta}},\,\frac{\left(1+\frac{h^2}{2}\lambda_{max}(F_1(Q_{N^h-1},Q_{N^h}))\right)^{2m}}{C(m,\sigma)h^2(1-h^\delta)}\right\}.
\end{align*}
It follows from Lemma \ref{EE} that $\mathbb{E}\left|\lambda_{max}(F_1(Q_{N^h-1},Q_{N^h}))\right|^p\le C(p,T)$ holds for any $p\ge1$. Since  $\frac{2v}{1-e^{-2vh}}=\mathcal{O}(h^{-1})$ as $h\rightarrow0$,  we get for any $p\ge1$,
$$\mathbb{E}\left|\lambda_{min}^{-1}(\gamma_{N^h})\right|^p\le C(p,m,\sigma)\max\left\{h^{-2\delta p},h^{-2p}\right\}h^{-p}.$$
Taking $\delta=1$ and using \eqref{det}, the desired result $\left\|\det(\gamma_{N^h})^{-1}\right\|_{L^p(\Omega)}=\mathcal{O}\left(h^{-\nu(p)}\right)$, $\nu(p)\le 6m$ as $h\rightarrow 0$ follows. 
\end{proof}

\begin{cor}
Let Assumptions \ref{F2}-\ref{F4} hold. Then $X_{N^h}$ admits a smooth density function.
\end{cor}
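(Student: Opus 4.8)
The plan is to verify that, for a fixed admissible stepsize $h$, the random vector $X_{N^h}$ is non-degenerate in the sense of Definition \ref{Def1}, and then to invoke the classical criterion for the smoothness of densities. So first I would fix $h\in(0,h_0]$ with $h_0$ small enough that the conclusions of both Lemma \ref{NDI} and Theorem \ref{UD} apply; for such a fixed $h$ the index $N^h=T/h$ is fixed and $X_{N^h}$ is a genuine $\mathbb{R}^{2m}$-valued random variable to which Definition \ref{Def1} can be applied.

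The first half of non-degeneracy, namely that the components of $X_{N^h}$ lie in $\mathbb{D}^\infty$, is already supplied by Lemma \ref{NDI}: it gives $X_{N^h}\in\mathbb{D}^\infty(\mathbb{R}^{2m})$, i.e. $X_{N^h}\in\mathbb{D}^{\alpha,p}(\mathbb{R}^{2m})$ for every integer $\alpha\ge1$ and every $p\ge1$. The second half concerns the Malliavin covariance matrix $\gamma_{N^h}$. Its almost sure invertibility is exactly the content of Theorem \ref{MRL} (recall $N^h\ge2$), and the required integrability of the inverse determinant follows from Theorem \ref{UD}: for every $p\ge1$ one has $\|\det(\gamma_{N^h})^{-1}\|_{L^p(\Omega)}=\mathcal{O}(h^{-\nu(p)})$, whence, for the fixed $h$ under consideration, $\det(\gamma_{N^h})^{-1}\in L^{\infty-}(\Omega)$. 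Combining these two facts shows that $X_{N^h}$ is a non-degenerate Wiener functional.

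It then remains only to apply the criterion for the smoothness of the density of a non-degenerate functional, namely \cite[Theorem 2.1.4]{DN06} together with \cite[Theorem 2.3.3]{DN06}, precisely as was done in the proof of Lemma \ref{NS} for the exact solution; this yields that the law of $X_{N^h}$ admits an infinitely differentiable density, completing the argument.

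I do not expect a genuine obstacle here: all the analytic work has already been carried out, first in the Malliavin--Sobolev estimates of Lemma \ref{NDI}, and above all in the lower bound on $\lambda_{\min}(\gamma_{N^h})$ underlying Theorem \ref{UD}, which is the delicate step replacing the (unavailable) H\"ormander theorem in the discrete setting. The one point meriting a line of care is that Theorem \ref{UD} is phrased as an asymptotic rate in $h$; one should observe that, since the constant and the exponent $\nu(p)$ there depend only on $p,m,\sigma$ and not on the sample path, this estimate in fact delivers finite moments of every order for $(\det\gamma_{N^h})^{-1}$ at each fixed $h\in(0,h_0]$, which is exactly what Definition \ref{Def1} requires.
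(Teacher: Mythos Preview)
Your proposal is correct and follows essentially the same route as the paper, which simply writes ``It follows immediately from Lemma \ref{NDI} and Theorem \ref{UD}.'' Your appeal to Theorem \ref{MRL} for almost sure invertibility is redundant (finiteness of $\|(\det\gamma_{N^h})^{-1}\|_{L^p}$ already forces $\det\gamma_{N^h}>0$ a.s.), and the citation of \cite[Theorem 2.3.3]{DN06} is unnecessary here since the non-degeneracy is supplied directly by Theorem \ref{UD} rather than via H\"ormander's theorem; only \cite[Theorem 2.1.4]{DN06} is needed.
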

\begin{proof}
It follows immediately from Lemma \ref{NDI} and Theorem \ref{UD}.
\end{proof}
\begin{rem}
If the coefficient $F\in C_p^k$ for some fixed constant $k\ge2$, then from Proposition 1.1 and \cite[Proposition 5.4]{SSM05}, 
the density functions of $X(T)$ and $X_{N^h}$ belong to $C^\alpha$ for some $\alpha=\alpha(k)$.
\end{rem}
Now we are in the position to deduce the convergence rate in density of scheme \eqref{split sol} for equation \eqref{SDE1}.

\textit{Proof of Theorem \ref{order}}\quad Let $h_0$ be a sufficiently small positive constant.
Theorem \ref{UD}, together with Lemma \ref{NS}, Theorem \ref{MDC} and Lemma \ref{pdf0} indicates that $\det(\gamma_{N^h})^{-1}$ has moments of all orders uniformly with respect to $h\in(0,h_0]$, i.e.,
\begin{equation*}
\sup_{h\in(0,h_0]}\left\|\det(\gamma_{N^h})^{-1}\right\|_{L^p(\Omega)}<\infty,
\end{equation*}
which combined with Lemma \ref{NS}, Theorem \ref{MDC}, Proposition \ref{pdf} and Remark \ref{pdf1} completes the proof.\qed

\begin{cor}
Let Assumptions \ref{F2}-\ref{F4} hold. Let $\beta\ge0, 1<q<\infty$ and $\alpha>\beta+2m/q+1$ and $G\in \mathbb{D}^{\alpha,q}, 1/p+1/q=1$. Then
\begin{align*}
\sup_{y\in\mathbb{R}^{2m}}\left|(1-\Delta)^{\beta/2}\mathbb{E}\left[G\cdot\delta_y\circ X_{N^h}\right]-(1-\Delta)^{\beta/2}\mathbb{E}\left[G\cdot\delta_y\circ X(T)\right]\right|=\mathcal{O}(h)~as~h\rightarrow0.
\end{align*}
In particular, set $G=1$, then we have
\begin{align*}
\sup_{y\in\mathbb{R}^{2m}}\left|(1-\Delta)^{\beta/2}p^{N^h}_T(X_0,y)-(1-\Delta)^{\beta/2}p_T(X(0),y)\right|=\mathcal{O}(h)~as~h\rightarrow0.
\end{align*}
where $p^{N^h}_T(X_0,y)=\mathbb{E}\left[\delta_y\circ X_{N^h}\right]$, $p_T(X(0),y)=\mathbb{E}\left[\delta_y\circ X(T)\right]$ are the density functions of $X_{N^h}$ and $X(T)$, respectively.
\end{cor}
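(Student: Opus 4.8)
The plan is to deduce this corollary directly from Theorem~\ref{order} by testing against $G$ in the Malliavin duality between $\mathbb{D}^{\alpha,q}$ and $\mathbb{D}^{-\alpha,p}$. First I would record that, under Assumptions~\ref{F2}--\ref{F4}, both $X(T)$ and $X_{N^h}$ are non-degenerate in the Malliavin sense: for $X(T)$ this is Lemma~\ref{NS} together with Hörmander's condition, while for $X_{N^h}$ it follows from Lemma~\ref{NDI} and Theorem~\ref{UD} (indeed the uniform-in-$h$ non-degeneracy extracted in the proof of Theorem~\ref{order}). Consequently, since $\alpha>\beta+2m/q+1>\beta+2m/q$ and the state space is $\mathbb{R}^{2m}$, the relation \eqref{delta} guarantees that $(1-\Delta)^{\beta/2}\delta_y\circ X_{N^h}$ and $(1-\Delta)^{\beta/2}\delta_y\circ X(T)$ lie in $\mathbb{D}^{-\alpha,p}$ for every $y\in\mathbb{R}^{2m}$, hence so does their difference $\Phi_y^h:=(1-\Delta)^{\beta/2}\delta_y\circ X_{N^h}-(1-\Delta)^{\beta/2}\delta_y\circ X(T)$.

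Next I would use that $(1-\Delta)^{\beta/2}$, acting in the variable $y$, commutes with the coupling $\mathbb{E}[G\,\cdot\,]$: for a non-degenerate random vector $F$ the map $y\mapsto\delta_y\circ F$ is smooth with values in $\mathbb{D}^{-\infty}$, so $(1-\Delta)^{\beta/2}\mathbb{E}[G\cdot\delta_y\circ F]=\mathbb{E}[G\cdot(1-\Delta)^{\beta/2}\delta_y\circ F]$. Applying this to $F=X_{N^h}$ and $F=X(T)$ and subtracting, the quantity to be estimated is $\mathbb{E}[G\cdot\Phi_y^h]$. By the definition of the dual norm on $\mathbb{D}^{-\alpha,p}=(\mathbb{D}^{\alpha,q})'$ and the assumption $G\in\mathbb{D}^{\alpha,q}$, one has $\bigl|\mathbb{E}[G\cdot\Phi_y^h]\bigr|\le\|G\|_{\alpha,q}\,\|\Phi_y^h\|_{-\alpha,p}$. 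Taking the supremum over $y\in\mathbb{R}^{2m}$ and invoking Theorem~\ref{order} gives $\sup_{y\in\mathbb{R}^{2m}}\bigl|(1-\Delta)^{\beta/2}\mathbb{E}[G\cdot\delta_y\circ X_{N^h}]-(1-\Delta)^{\beta/2}\mathbb{E}[G\cdot\delta_y\circ X(T)]\bigr|\le\|G\|_{\alpha,q}\,\sup_{y}\|\Phi_y^h\|_{-\alpha,p}=\mathcal{O}(h)$, which is the first assertion. For the second, I would specialize to $G\equiv1$, which belongs to $\mathbb{D}^{\alpha,q}$ for all $\alpha,q$ with $\|1\|_{\alpha,q}=1$, and use $\mathbb{E}[\delta_y\circ F]=\rho_F(y)$ from Section~\ref{S2} to identify $p^{N^h}_T(X_0,y)$ and $p_T(X(0),y)$ with the densities of $X_{N^h}$ and $X(T)$; smoothness of these densities (so that the action of $(1-\Delta)^{\beta/2}$ in $y$ is literal rather than distributional) comes from Lemma~\ref{NS} for $X(T)$ and from Lemma~\ref{NDI} with Theorem~\ref{UD} for $X_{N^h}$.

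The computation is essentially routine once Theorem~\ref{order} is available; the only point needing a word of care is the interchange of the $y$-derivatives $(1-\Delta)^{\beta/2}$ with the generalized expectation $\mathbb{E}[G\cdot\,]$, which I expect to be the main (and minor) obstacle. It is handled by the standard fact that, for a non-degenerate Wiener functional $F$, the Donsker delta $\delta_y\circ F$ depends smoothly on $y$ as a $\mathbb{D}^{-\infty}$-valued function — the same regularity underlying \eqref{delta} and the identity $\mathbb{E}[\delta_y\circ F]=\rho_F(y)$ already quoted in Section~\ref{S2} — so differentiation in $y$ passes through the continuous pairing with $G$. No estimates beyond those of Theorems~\ref{order} and~\ref{UD} are required.
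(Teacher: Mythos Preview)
Your proposal is correct and follows essentially the same approach as the paper: establish non-degeneracy of $X(T)$ and $X_{N^h}$ so that the Donsker deltas lie in $\mathbb{D}^{-\alpha,p}$, pass $(1-\Delta)^{\beta/2}$ inside the pairing, bound by duality $\bigl|\mathbb{E}[G\cdot\Phi_y^h]\bigr|\le\|G\|_{\alpha,q}\,\|\Phi_y^h\|_{-\alpha,p}$, and invoke Theorem~\ref{order}. The paper justifies the interchange step by a direct citation of \cite[Theorem 4.3]{IW84} for the smoothness of $y\mapsto\mathbb{E}[G\cdot\delta_y\circ X_{N^h}]$, whereas you spell out the same point in your own words; otherwise the arguments coincide.
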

\begin{proof}
Lemma \ref{NS} and Lemma \ref{NDI} and Theorem \ref{UD} yield
that $X(T),\,X_{N^h}$ are non-degenerate functionals. Thus, from \eqref{delta}, it follows that  for any $\alpha>\beta+2m/q+1,\,1/p+1/q=1$, we have
$(1-\Delta)^{\beta/2}\delta_y\circ X(T)\in\mathbb{D}^{-\alpha,p},$ and $
(1-\Delta)^{\beta/2}\delta_y\circ X_{N^h}\in\mathbb{D}^{-\alpha,p}.$
 \cite[Theorem 4.3]{IW84} implies that  the map $y\rightarrow\mathbb{E}\left[G\cdot\delta_y\circ X_{N^h}\right]$ is $\beta$-times continuously differentiable.
From the definition of $\mathbb{D}^{-\alpha,p}$, it follows that
\begin{align*}
&(1-\Delta)^{\beta/2}\mathbb{E}\left[G\cdot\delta_y\circ X_{N^h}\right]-(1-\Delta)^{\beta/2}\mathbb{E}\left[G\cdot\delta_y\circ X(T)\right]\\
&=\mathbb{E}\left[G\cdot\left\{(1-\Delta)^{\beta/2}\left[\delta_y\circ X_{N^h}\right]-(1-\Delta)^{\beta/2}\left[\delta_y\circ X(T)\right]\right\}\right]\\
&\le\left\|(1-\Delta)^{\beta/2}\delta_y\circ X_{N^h}-(1-\Delta)^{\beta/2}\delta_y\circ X(T)\right\|_{-\alpha,p}\|G\|_{\alpha,q}.
\end{align*}
Taking supremum over $y\in\mathbb{R}^{2m}$, we complete the proof.
\end{proof}

\section{Numerical experiments}\label{S7}
In this section, we implement some numerical tests to verify our theoretic result on the strong convergence rate of scheme \eqref{split sol}. In particular, we consider the following two stochastic Langevin equations.

\textit{Example 1:} Taking $m=1$, $d=1$ and $F(Q)=Q^4$, consider the following $2$-dimensional Langevin equation 
\begin{equation}\label{example1}
\begin{split}
&\,\ud P=-4Q^3\,\ud t-vP\,\ud t+\sigma\,\ud W(t),\\
&\,\ud Q=P\,\ud t,
\end{split}
\end{equation}
where $v>0, \,\sigma$ are fixed constants.

\textit{Example 2:} Taking $m=2$, $d=2$ and $F(Q)=Q_1^8+Q_2^2+2Q_1Q_2$, consider the following $4$-dimensional Langevin equation 
\begin{equation}\label{example2}
\begin{split}
&\,\ud P_1=-8Q_1^7\,\ud t-2Q_2\,\ud t-vP_1\,\ud t+\sigma_{11}\,\ud W_1(t)+\sigma_{12}\,\ud W_2(t),\\
&\,\ud P_2=-2Q_2\,\ud t-2Q_1\,\ud t-vP_2\, \ud t+\sigma_{21}\,\ud W_2(t)+\sigma_{22}\,\ud W_2(t),\\
&\,\ud Q_1=P_1\,\ud t,\\
&\,\ud Q_2=P_2\,\ud t,
\end{split}
\end{equation}
where $v>0,\, \sigma_{ij},\, i,\,j=1,2$ are fixed constants.

In the following experiments, we choose $\sigma=1,\,P(0)=Q(0)=1$ in equation \eqref{example1} and $\sigma_{ij}=1,\,i,j=1,2,\,P_i(0)=Q_i(0)=1,\,i=1,2$ in equation \eqref{example2}.
Errors in mean square sense of the numerical solutions against stepsize $h$ on a log-log scale are shown in Figure \ref{fig2}.
In this experiment, we compute the mean square errors at the final time $T=1$ with time steps ranging from $h=2^{-7}$ to $h=2^{-11}$,  respectively. The reference solution is computed by using the tamed Euler scheme with stepsize $h_{ref}=2^{-14}$. The expectation is realized by using the average of 200 samples and 2000 samples, which are represented by green and blue solid lines, respectively. 
The reference red dashed line has slope $1$. Figure \ref{fig2} illustrates that the strong convergence order of the splitting AVF scheme \eqref{split sol} is consistent with the theoretical result in Theorem \ref{SC1}.
\begin{figure}
	\centering
	\subfloat[$m=1,\,v=1$]{\includegraphics[width=0.5\columnwidth]{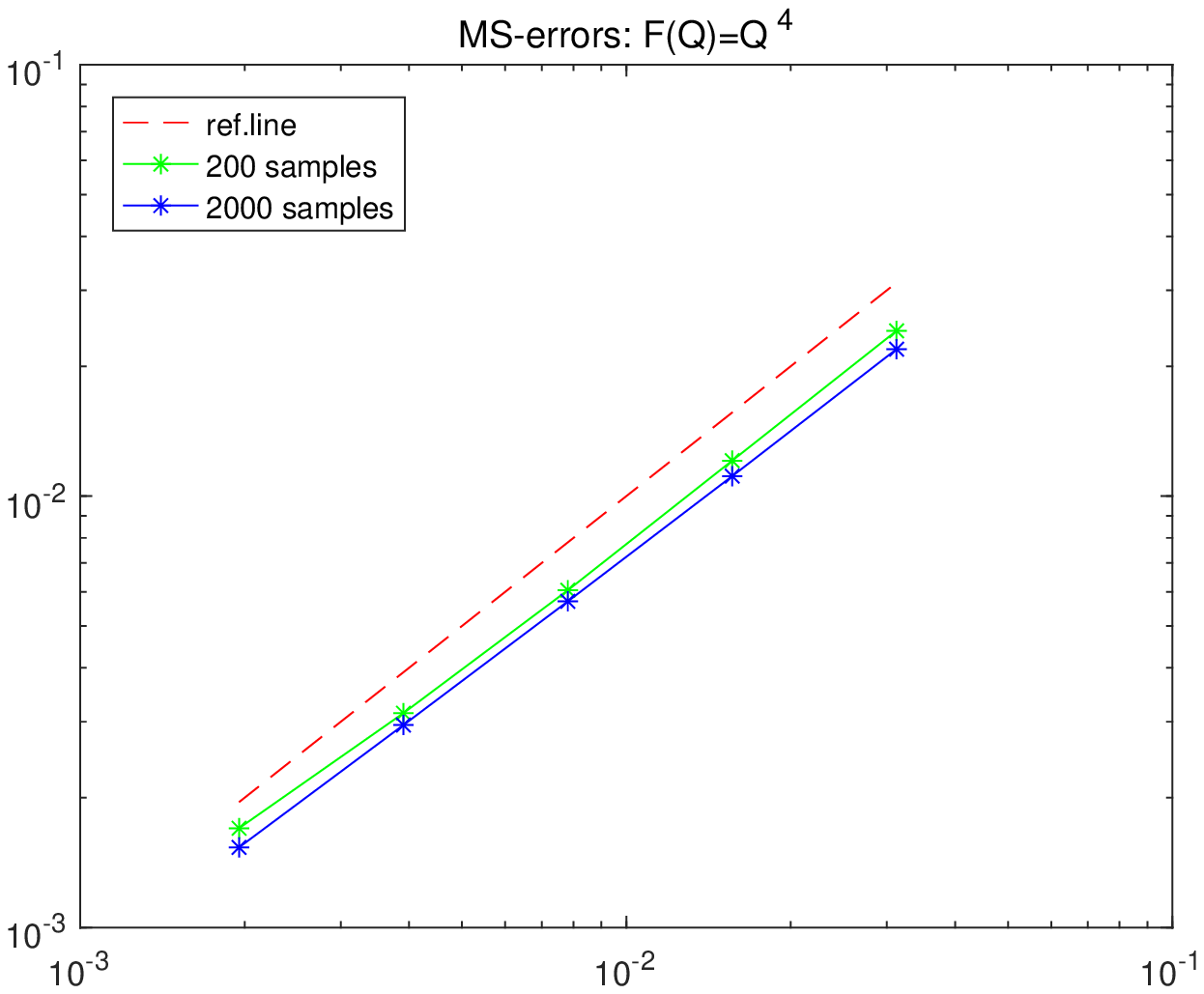}}
	\subfloat[$m=2,\,v=1$]{\includegraphics[width=0.5\columnwidth]{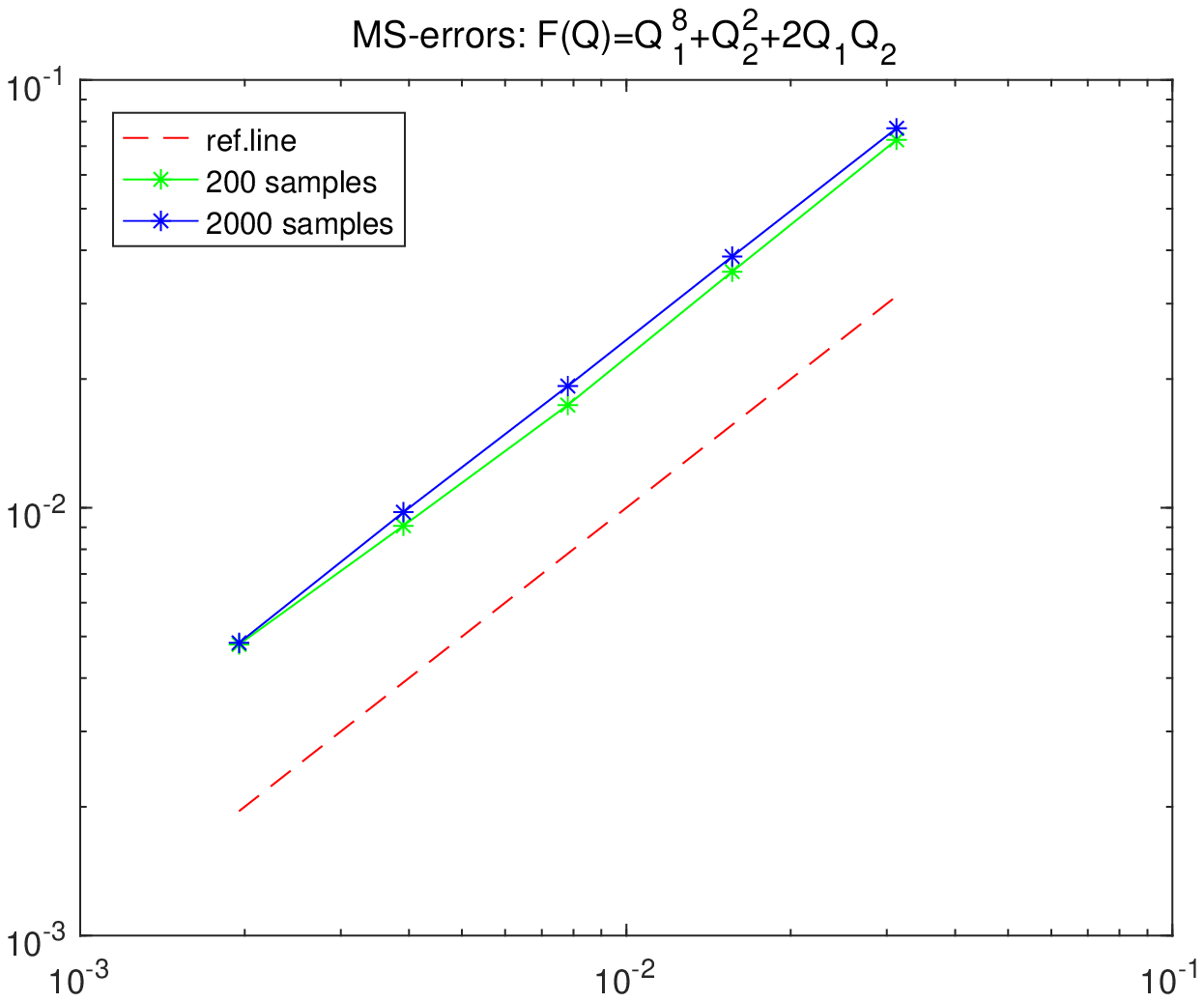}}\\
	\caption{Mean square convergence rate of splitting AVF method for stochastic Langevin equations.}\label{fig2}
	\vspace{0.2in}
\end{figure}
\\

\textbf{Acknowledgments.}
The authors are very grateful to Professor Yaozhong Hu (University of Alberta)  for his helpful discussions and suggestions.

\bibliographystyle{plain}
\bibliography{pdfreference}

\end{document}